\newcommand{\CC}{\mbox{\bbb C}}
\newcommand{\RR}{\mbox{\bbb R}}
\newcommand{\ZZ}{\mbox{\bbb Z}}
\font\bb=msbm7 scaled\magstep1
\numberwithin{equation}{section}
\newtheorem{theorem}{Theorem}[section]
\newtheorem{lemma}[theorem]{Lemma}
\newtheorem{cor}[theorem]{Corollary}
\theoremstyle{definition}
\newtheorem{defn}[theorem]{Definition}
\newtheorem{eg}[theorem]{Example}
\newtheorem{prop}[theorem]{Proposition}
\newtheorem{remark}[theorem]{Remark}
\newtheorem{qn}[theorem]{Question}
\def \begineq{\begin{equation}}
\def \endeq{\end{equation}}
\def \bb{\mathbb}
\def \CC{{\bb{C}}}
\def \CP{{\bb{CP}}}
\def \RR{{\bb{R}}}
\def \TT{{\bb{T}}}
\def \ZZ{{\bb{Z}}}
\def \({\left(}
\def \){\right)}
\def \<{\langle}
\def \>{\rangle}
\def \bar{\overline}
\def \qed{\hfill $\square$ \vspace{0.03in}}
\begin{document}

\author[1] {Basudeb Datta}
\author[2] {Soumen Sarkar}

\affil[1] {Department of Mathematics, Indian Institute of Science, Bangalore 560\,012,
India. dattab@math.iisc.ernet.in}

\affil[2] {Department of Mathematics and Statistics, University of Regina,
3737 Wascana Parkway, Regina, Canada. sarkar5s@uregina.ca,
Soumen.Sarkar@uregina.ca}

\title{Equilibrium triangulations of some quasitoric 4-manifolds}


\date{}

\maketitle

\vspace{-12mm}

\begin{center}

\date{July 23, 2015}

\end{center}

\noindent {\bf Abstract:} Quasitoric manifolds, introduced by M. Davis and T. Januskiewicz in 1991, are topological
generalizations of smooth complex projective spaces. In 1992, Banchoff and K\"uhnel constructed a 10-vertex
equilibrium triangulations of $\CP^2$. We generalize this construction for quasitoric manifolds and construct
some equilibrium triangulations of $4$-dimensional quasitoric manifolds. In some cases,
our constructions give vertex minimal equilibrium triangulations.

\section{\sc Introduction}

Quasitoric manifolds were introduced by Davis and Januskiewicz in their pioneering paper \cite{DJ}. These are topological
generalizations of nonsingular complex projective spaces. In toric topology, action of the compact torus $\TT^n$
on smooth oriented manifolds with nice properties produce bridges between topology and combinatorics. The moment
map for Hamiltonian action of compact torus on symplectic manifolds is such an example, see \cite{Au}. The
complex projective space $\CP^n$ with standard $\TT^n$ action is a quasitoric manifold over an $n$-simplex. Many
topological properties of quasitoric manifolds have been studied last $20$ years.

Triangulations of polytopes and manifolds have been studying for many years \cite{Le, Ri, Ca, Alt, Lu, BD, Da}.
There is a $7$-vertex unique vertex minimal triangulation of the torus $\bb{T}^2$. Starting with this triangulation
Banchoff and K\"uhnel \cite{BK} constructed a $10$-vertex equilibrium triangulation of $\bb{CP}^2$. The goal of
this article is to introduce the notion of equilibrium triangulation for quasitoric manifolds and to study
combinatorial properties, especially equilibrium triangulations, of $4$-dimensional
quasitoric manifolds which include nonsingular projective surfaces. At this point we do not know if every
quasitoric manifold has an equilibrium triangulation.

The article is organized as follows. In Section \ref{preli}, we recall the basic definitions of quasitoric
manifolds, triangulations of manifolds and lens spaces. With the minimal $7$-vertex triangulation of ${\TT}^2$,
we have constructed countably many triangulations of $S^3$, $\mathbb{RP}^3$, $L(3, 1)$, $L(4, 1)$, $L(5, 2)$ and
$L(7, 2)$ in Section \ref{tortri}. Number of vertices of this triangulations are given in the same section.
In Section \ref{qmtri}, we introduce the definition of equilibrium triangulation for quasitoric manifolds.
We present equilibrium triangulations of some nonsingular projective surfaces and $4$-dimensional quasitoric
manifolds in Sections \ref{m4}, \ref{m5} and \ref{m6}. In some cases we found vertex minimal equilibrium triangulations.

\section{\sc Preliminaries}\label{preli}

\subsection{\sc Quasitoric manifolds and nonsingular projective spaces}\label{defegm}
In this subsection we recall the definition of quasitoric manifolds, the equivariant classification of quasitoric
manifolds and a relation between quasitoric manifolds and nonsingular projective spaces.

An $n$-dimensional {\em simple} polytope in $\RR^n$ is a convex polytope where exactly $n$ bounding hyperplanes
meet at each vertex. So, an $n$-polytope $P$ in $\RR^n$ is simple if and only if each vertex of $P$ is in exactly
$n$ facets (codimension one faces).  Therefore, a codimension $k$ face of $P$ is the intersection of exactly $k$
facets.  Clearly every $2$-polytope is simple.

Let $\TT^n=\{(z_1, \ldots, z_n) \in \CC^n : |z_1| = \cdots = |z_n| =1\}$. A smooth action of $\TT^n$ on a
$2n$-dimensional smooth manifold $M$ is said to be {\em locally standard} if every point $y \in M $ has a
$\TT^n$-stable open neighborhood $U_y$ and a diffeomorphism $\psi : U_y \to V$, where $V$ is a $\TT^n$-stable
open subset of $\CC^n$, and an isomorphism $\delta_y : \TT^n \to \TT^n$ such that $\psi (t\cdot x) = \delta_y (t)
\cdot \psi(x)$ for all $(t,x) \in \TT^n \times U_y$.

A $2n$-dimensional {\em quasitoric manifold} is a triple $(M, P, \mathfrak{q})$, where $M$ is a $2n$-dimensional
closed smooth manifold with an effective $\TT^n$-action, $P$ is a simple $n$-polytope and $\mathfrak{q}: M \to P$
is a projection map satisfying the following conditions: $(i)$ the $\TT^n$-action is locally standard, $(ii)$ the
map $\mathfrak{q}: M \to P$ is constant on $\TT^n$ orbits and maps every $\ell$-dimensional orbit to a point in
the interior of an $\ell$-dimension face of $P$. We also say that $\mathfrak{q} : M \to P$ is a quasitoric
manifold over $P$ (or simply $M$ is a quasitoric manifold over $P$). All complex projective spaces $\CP^{n}$ and
their equivariant connected sums, products are quasitoric manifolds, (cf. \cite{DJ}).

\begin{prop}[{\cite[Lemma 1.4]{DJ}}] \label{clema}
Let $\mathfrak{q} : M \to P$  be a $2n$-dimensional quasitoric manifold. Then, there is a projection map $f: \TT^n
\times P \to M$ so that, $f$ maps $\TT^n \times q$ onto $ \mathfrak{q}^{-1}(q)$ for each $q \in P$.
\end{prop}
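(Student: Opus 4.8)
The plan is to reduce the statement to the existence of a continuous section $s\colon P \to M$ of $\mathfrak{q}$, i.e.\ a continuous map with $\mathfrak{q}\circ s = \mathrm{id}_P$. Given such an $s$, I would define $f\colon \TT^n\times P \to M$ by $f(t,q) = t\cdot s(q)$. This $f$ is continuous and $\TT^n$-equivariant, and since every fibre $\mathfrak{q}^{-1}(q)$ is a single $\TT^n$-orbit (which follows from local standardness, because the standard model $\CC^n \to \RR^n_{\geq 0}$ has single orbits as fibres), one gets $f(\TT^n\times q) = \TT^n\cdot s(q) = \mathfrak{q}^{-1}(q)$ for every $q\in P$, exactly as required. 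Thus the whole content is to produce the section $s$, and $f$ is then automatically a surjection with the desired fibre behaviour.

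First I would record the local picture. By local standardness each $y\in M$ has a $\TT^n$-stable neighbourhood equivariantly modelled on an open $\TT^n$-invariant $V\subseteq \CC^n$, and the standard quotient $\CC^n \to \RR^n_{\geq 0}$, $(z_1,\ldots,z_n)\mapsto (|z_1|,\ldots,|z_n|)$, admits the canonical section $(r_1,\ldots,r_n)\mapsto (r_1,\ldots,r_n)$ that picks out the non-negative real point of each orbit. Transporting this through the chart gives, for each $y$, a continuous local section defined on an open subset of $P$. Moreover, over the relative interior of a codimension-$k$ face the isotropy subgroup is a $k$-dimensional subtorus, so the orbit is $\TT^{n-k}$; in particular the fibre over a vertex is a single point, which forces the value of any section there.

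Then I would build $s$ globally by induction on the faces of $P$, ordered by dimension, extending a continuous section over the $(\ell-1)$-skeleton to each $\ell$-face $F$. On the $0$-skeleton the section is forced, since the vertex fibres are points. For the inductive step I would first use the local sections above to extend continuously across a collar of $\partial F$ inside $F$, and then extend over the remaining disk $F \cong D^{\ell}$, where over $\mathrm{relint}\,F$ the orbit map is a trivial principal $\TT^{\ell}$-bundle over the contractible base $\mathrm{relint}\,F$. In this trivialization the extension problem becomes that of extending a map $\partial F \to \TT^{\ell}$ over $F$, whose obstruction lives in $\pi_{\ell-1}(\TT^{\ell})$.

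The main obstacle, and the place where the hypotheses must really be used, is precisely this extension across faces: since the torus is aspherical, $\pi_{\ell-1}(\TT^{\ell})$ vanishes for $\ell=1$ and for $\ell\geq 3$, but for $\ell=2$ the relevant group $\pi_1(\TT^2)\cong\ZZ^2$ is nonzero, so a priori the boundary section could determine a nontrivial class. I would resolve this by exploiting the nonsingularity built into local standardness, namely that at each vertex the $n$ isotropy subtori of the incident facets together span $\TT^n$; this compatibility is exactly what forces every such boundary class to vanish, so the section extends over each $2$-face. Carrying the induction up to $\ell=n$ then yields the global section $s$ on all of $P$, and hence the desired map $f$.
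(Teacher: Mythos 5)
The paper itself offers no argument for Proposition \ref{clema}: it is imported verbatim from \cite{DJ} (Lemma 1.4 there), so your proposal can only be measured against the standard proof. Your architecture matches it: reducing the statement to a continuous section $s\colon P\to M$ of $\mathfrak{q}$ (with $f(t,q)=t\cdot s(q)$, using that fibres are single orbits), taking the nonnegative section of $\CC^n\to \RR^n_{\geq 0}$ as the local model, and inducting over the skeleta of $P$ is exactly how the section is produced in the literature. The $0$- and $1$-dimensional steps and the vanishing of $\pi_{\ell-1}(\TT^{\ell})$ for $\ell\geq 3$ are fine.

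The genuine gap is the $\ell=2$ step, which is the heart of the induction and which you assert rather than prove. First, the ``boundary class'' in $\pi_1(\TT^2)$ is not an invariant of the section on $\partial F$: to obtain a map $S^1\to\TT^2$ at all, you must first extend the section over a collar of $\partial F$ and trivialize over the open stratum, and different collar extensions change the resulting class --- in fact by an \emph{arbitrary} element of $\pi_1(\TT^2)$, because over each vertex of $F$ the fibre is a single point, continuity of a section at a vertex is automatic (in a local chart, all of $\mathfrak{q}^{-1}(U)$ converges to the fixed point as $U$ shrinks to the vertex), and so the collar extension near a vertex may be composed with any loop in $\TT^2$. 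Second, and consequently, your proposed mechanism is misattributed: the spanning condition on the isotropy subtori at a vertex does not force a well-defined class to vanish --- it only guarantees the local charts --- and there is no canonical class to vanish in the first place. What actually completes the step is the positive use of the degenerate vertex fibres: one \emph{chooses} the collar extension, absorbing any winding into arcs near a vertex of $F$ (every $2$-face has one), so that the inner loop becomes null-homotopic; this argument, which is where the hypotheses really enter, is absent from your proof. The same point-fibre freedom is also what you need to glue the local collar extensions in your ``extend across a collar'' step (torus-valued sections cannot be patched by partitions of unity), a point you leave unaddressed for all $\ell$. As written, then, the decisive case of the induction is a claim, and the reason offered for it would not survive being made precise.
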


Now we briefly discuss the combinatorial definition of quasitoric manifolds following \cite{DJ}.
Let $\mathcal{F}(P)$ be the set of all facets of an $n$-dimensional simple polytope $P$. For a simple $n$-polytope
$P$, a function 
$$
\xi : \mathcal{F}(P) \to \ZZ^n
$$ 
is called a {\em characteristic function} if the submodule
generated by $\{\xi(F_{j_1}), \ldots, \xi(F_{j _{\ell}})\}$ is an $\ell$-dimensional direct summand of $\ZZ^n$
whenever the intersection $F_{j_1}\cap \cdots \cap F_{j _{\ell}}$ is nonempty. The vectors $\xi(F_{1}), \ldots,
\xi(F_m)$ are called {\em characteristic vectors} and the pair $(P, \xi)$ is called a {\em characteristic pair}.

In \cite{DJ}, the authors show that given a quasitoric manifold one can associate a characteristic pair to it up
to choice of signs of characteristic vectors. They also constructed a quasitoric manifold from the pair $(P,
\xi)$. It's description is the following.

Let $P$ be a simple $n$-polytope and $(P, \xi)$ be a characteristic pair. A codimension
$k$ face $F$ of $P$ is the intersection $F_{j_1} \cap \cdots \cap F_{j_k}$ 
of unique collection of $k$ facets $F_{j_1}, \ldots, F_{j_k}$ of $P$. Let $\ZZ(F)$ be the submodule of
$\ZZ^n$ generated by the characteristic vectors $\xi(F_{j_1}), \ldots, \xi(F_{j_k})$. The module $\ZZ(F)$ is a
direct summand of $\ZZ^n$. Therefore, the torus $\TT_F : = (\ZZ(F) \otimes_{\ZZ} \RR)/\ZZ(F)$ is a direct summand
of $\TT^n$. Define $\ZZ(P)=\{0\}$ and $\TT_P$ to be the proper trivial subgroup of $\TT^n$. If $p \in P$, $p$
belongs to relative interior of a unique face $ F$ of $P$.

Define an equivalence relation `$\sim$' on the product $\TT^n \times P$ by
\begin{equation}\label{eqsou01}
(t,p) \sim (s,q)~\mbox{if} ~p=q~\mbox{and} ~ s^{-1}t \in \TT_F,
\end{equation}
where $p \in F^{\circ}$. Let 
$$
M(P, \xi) := (\TT^n \times P) / \sim 
$$ 
be the quotient space. The group operation
in $\TT^n$ induces a natural action of $\TT^n$ on $M(P, \xi)$, namely $g. [h, x] = [gh, x]$. The projection onto
the second factor of $\TT^n \times P$ descends to the quotient map
\begin{equation}\label{equ002}
\pi : M(P, \xi) \to P ~\mbox{given by} ~ \pi([t,p]) = p.
\end{equation}
So, the orbit space of this $\TT^n$ action on $M(P, \xi)$ is the polytope $P$ itself. One can show that the space
$M(P, \xi)$ has the structure of a quasitoric manifold (cf. \cite{DJ}).

\begin{remark}[{\cite[Corollary 3.9]{DJ}}]
{\rm A quasitoric manifold $\mathfrak{q} : M \to P$ is simply connected. So, $M$ is orientable. A choice of
orientation on $\TT^n $ and $ P$ gives an orientation on $M$.}
\end{remark}

Let $\delta : \TT^n \to \TT^n$ be an automorphism. Two quasitoric manifolds $M_1$ and $M_2$ over the same
polytope $P$ are called {\em $\delta$-equivariantly} $homeomorphic$ if there is a homeomorphism $f : M_1 \to M_2$
such that $f(t \cdot x) = \delta(t)\cdot f(x)$ for all $(t, x ) \in \TT^n \times M_1$. Moreover, if $\delta$ is
the identity, $f$ is called an {\em equivariant homeomorphism} and the $\TT^n$ actions on $M_1$ and $M_2$ are
called {\em equivalent}.

\begin{prop}[{\cite[Proposition 1.8]{DJ}}]\label{clema2}
Let $\mathfrak{q}:M \to P$ be a $2n$-dimensional quasitoric manifold over $P$ and $\xi:\mathcal{F}(P) \to \ZZ^n$
be its associated characteristic function. Let $\mathfrak{q}_{M}: M(P, \xi) \to P$ be the quasitoric manifold
constructed (as in equation $\ref{equ002}$) from the pair $(P, \xi)$. Then, the map $f: \TT^n \times P \to M$ of
Proposition $\ref{clema}$ descends to an equivariant homeomorphism $ M(P, \xi) \to M$ covering the identity on
$P$.
\end{prop}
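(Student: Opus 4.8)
The plan is to show that the equivariant map $f$ of Proposition \ref{clema} is constant on the $\sim$-equivalence classes, hence descends to a continuous map $\bar f \colon M(P,\xi) \to M$, and then to verify that $\bar f$ is an equivariant homeomorphism covering the identity on $P$. First I would record the two properties of $f$ coming from Proposition \ref{clema} together with its construction in \cite{DJ}: the map $f$ is $\TT^n$-equivariant in the first variable, that is $f(gt,q) = g \cdot f(t,q)$, and $\mathfrak{q} \circ f$ is the projection onto $P$, so that $f(\TT^n \times \{q\}) = \mathfrak{q}^{-1}(q)$ for every $q \in P$. In particular each fiber $\mathfrak{q}^{-1}(q)$ is a single $\TT^n$-orbit.

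The key step is to identify the isotropy subgroups. Fix $q$ in the relative interior $F^{\circ}$ of a codimension-$k$ face $F = F_{j_1} \cap \cdots \cap F_{j_k}$. Using local standardness, in a $\TT^n$-stable chart around a point of $\mathfrak{q}^{-1}(q)$ the action is modelled on the standard coordinate action on $\CC^n$, and the very definition of the associated characteristic function $\xi$ (as derived from $M$ in \cite{DJ}) forces the isotropy subgroup of every point of $\mathfrak{q}^{-1}(q)$ to be exactly the subtorus $\TT_F$ generated by $\xi(F_{j_1}), \ldots, \xi(F_{j_k})$. Since $\mathfrak{q}^{-1}(q)$ is a single orbit, the restriction $f(\cdot,q)\colon \TT^n \to \mathfrak{q}^{-1}(q)$ is exactly the orbit map $\TT^n \to \TT^n/\TT_F$. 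Consequently $f(t,q)=f(s,q)$ if and only if $s^{-1}t \in \TT_F$, which is precisely the relation $(t,q)\sim(s,q)$ of \eqref{eqsou01}. Hence $f$ is constant on equivalence classes and separates distinct classes.

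It follows that $f$ descends to a well-defined bijection $\bar f\colon M(P,\xi)\to M$ given by $\bar f([t,q]) = f(t,q)$: it is injective by the separation of classes just observed, and surjective because $f$ is onto each fiber and the fibers exhaust $M$. Equivariance $\bar f([gh,x]) = g\cdot\bar f([h,x])$ and the identity $\mathfrak{q}(\bar f([t,q])) = q = \mathfrak{q}_M([t,q])$ (so that $\bar f$ covers the identity on $P$) are immediate from the corresponding properties of $f$. Finally, $\bar f$ is continuous because $f$ is continuous and $M(P,\xi)$ carries the quotient topology; since $M(P,\xi)$ is compact, being a quotient of the compact space $\TT^n\times P$, and $M$ is Hausdorff, the continuous bijection $\bar f$ is automatically a homeomorphism.

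The main obstacle is the isotropy computation in the second step: one has to invoke local standardness carefully to reduce to the standard model on $\CC^n$ and then match the isotropy subgroup over each face $F$ with the subtorus $\TT_F$ prescribed by the characteristic vectors. Once this identification is secured, the remaining verifications are formal.
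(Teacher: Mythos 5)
Your proposal is correct, and it reconstructs exactly the standard Davis--Januszkiewicz argument: the paper itself gives no proof of this proposition (it is quoted as Proposition 1.8 of \cite{DJ}), and your three steps --- identifying the isotropy group over the relative interior of a face $F$ with the subtorus $\TT_F$ via local standardness, concluding that $f$ identifies points precisely according to the relation \eqref{eqsou01} so that it descends to an equivariant continuous bijection covering the identity on $P$, and upgrading this bijection to a homeomorphism by compactness of $M(P,\xi)$ and Hausdorffness of $M$ --- are the proof in \cite{DJ}. The one point you rightly flagged as needing care, that every point of $\mathfrak{q}^{-1}(q)$ has isotropy exactly $\TT_F$ (constant along the orbit since $\TT^n$ is abelian), is indeed where local standardness and the definition of $\xi$ enter, and your treatment of it is sound.
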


Two characteristic pairs $(P, \xi)$ and $(Q, \eta)$ are called {\em equivalent} if there is a diffeomorphism
$\psi: P \to Q$ (as manifold with corners) such that $\eta(\psi(F))=\pm \xi(F)$ for all $F \in \mathcal{F}(P)$.
Note that $\psi(\mathcal{F}(P)) = \mathcal{F}(Q)$. Using Proposition \ref{clema2} we can prove the following.
\begin{prop}\label{clema3}
 Let $M$ and $N$ be $2n$-dimensional quasitoric manifolds over $P$ and $Q$ with the characteristic
 functions $\xi$ and $\eta$ respectively. Then, $M$ and $N$ are equivariantly homeomorphic if and only if 
 $(P, \xi)$ and $(Q, \eta)$ are equivalent.
\end{prop}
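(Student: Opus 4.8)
The plan is to reduce everything to the canonical models $M(P,\xi)$ and $M(Q,\eta)$ built in equation $\ref{equ002}$, using Proposition $\ref{clema2}$ to replace $M$ and $N$ by these models up to equivariant homeomorphism. The single observation that drives both directions is that the subtorus $\TT_F$ attached to a face $F$ depends only on the submodule $\ZZ(F)\subseteq\ZZ^n$ generated by the relevant characteristic vectors, and this submodule is unchanged if some of its generators are replaced by their negatives. Consequently the sign ambiguity built into the notion of equivalence of characteristic pairs is exactly the ambiguity that the gluing relation \eqref{eqsou01} cannot detect.

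For the ``if'' direction, suppose $(P,\xi)$ and $(Q,\eta)$ are equivalent through a diffeomorphism $\psi:P\to Q$ with $\eta(\psi(F))=\pm\,\xi(F)$ for every facet $F$. First I would record that for each face $F=F_{j_1}\cap\cdots\cap F_{j_k}$ of $P$ the submodule generated by $\eta(\psi(F_{j_1})),\ldots,\eta(\psi(F_{j_k}))$ coincides with $\ZZ(F)$, because each generator differs from $\xi(F_{j_i})$ only by a sign; hence $\TT_{\psi(F)}=\TT_F$ as subgroups of $\TT^n$. Then I would define $\widehat\psi:\TT^n\times P\to\TT^n\times Q$ by $\widehat\psi(t,p)=(t,\psi(p))$ and check that it carries the relation $\sim$ on $\TT^n\times P$ to the relation on $\TT^n\times Q$: if $p\in F^\circ$ then $\psi(p)\in\psi(F)^\circ$, and $s^{-1}t\in\TT_F$ if and only if $s^{-1}t\in\TT_{\psi(F)}$. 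Thus $\widehat\psi$ descends to a homeomorphism $M(P,\xi)\to M(Q,\eta)$, and since it commutes with left translation it is $\TT^n$-equivariant. Composing with the equivariant homeomorphisms $M(P,\xi)\to M$ and $M(Q,\eta)\to N$ of Proposition $\ref{clema2}$ then yields an equivariant homeomorphism $M\to N$.

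For the ``only if'' direction, I would start from an equivariant homeomorphism $f:M\to N$ with $f(t\cdot x)=t\cdot f(x)$. Because $f$ carries $\TT^n$-orbits onto $\TT^n$-orbits, it descends to a homeomorphism $\psi:P\to Q$ of orbit spaces. Since $f$ preserves isotropy, it sends the set of points with $\ell$-dimensional orbit onto the corresponding set in $N$, and by condition $(ii)$ in the definition of a quasitoric manifold these sets are precisely the preimages of the relative interiors of $\ell$-faces; hence $\psi$ carries the relative interior of each $\ell$-face of $P$ onto that of an $\ell$-face of $Q$, preserving the whole face poset and in particular satisfying $\psi(\mathcal{F}(P))=\mathcal{F}(Q)$. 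To compare the characteristic functions I would use that for a point $x$ lying over $F^\circ$ the isotropy subgroup of the $\TT^n$-action is the circle $\TT_F$, whose primitive generator is $\xi(F)$ up to sign; equivariance of $f$ forces the isotropy at $f(x)$, which lies over $\psi(F)^\circ$, to be the same circle, with generator $\eta(\psi(F))$ up to sign. As a primitive generator of a rank-one direct summand of $\ZZ^n$ is determined up to sign, this gives $\eta(\psi(F))=\pm\,\xi(F)$, so $(P,\xi)$ and $(Q,\eta)$ are equivalent.

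The step I expect to be delicate is upgrading the orbit-space homeomorphism $\psi$ of the ``only if'' direction from a mere homeomorphism to a diffeomorphism of manifolds with corners, as the definition of equivalence demands: since $f$ is assumed only continuous, a priori $\psi$ is only a face-preserving homeomorphism. The resolution is that a homeomorphism between two simple polytopes preserving the face lattice supplies exactly the combinatorial correspondence of facets that enters $\eta(\psi(F))=\pm\,\xi(F)$, and it can be replaced by a diffeomorphism of manifolds with corners realizing the same correspondence of faces without affecting that identity. The remaining bookkeeping---that the sign indeterminacy in the two directions matches and that the three equivariant homeomorphisms compose correctly---is routine once the identification $\TT_F=\TT_{\psi(F)}$ is in hand.
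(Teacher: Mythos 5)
Your proof is correct and follows exactly the route the paper intends: the paper offers no written proof beyond the remark that the statement follows from Proposition \ref{clema2}, and your argument fills in precisely that reduction to the canonical models $M(P,\xi)$, $M(Q,\eta)$, with the sign-insensitivity of the submodules $\ZZ(F)$ (hence of the subtori $\TT_F$) handling the ``if'' direction and the isotropy-circle comparison, together with the standard fact that combinatorially equivalent simple polytopes are diffeomorphic as manifolds with corners, handling the ``only if'' direction. Nothing further is needed.
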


\begin{eg}[Example 1.18, \cite{DJ}]\label{triangle} \rm{
Let $Q$ be a triangle in $\RR^2$. The possible characteristic functions are indicated by the following figures.

\begin{figure}[ht]
\centerline{ \scalebox{0.63}{\input{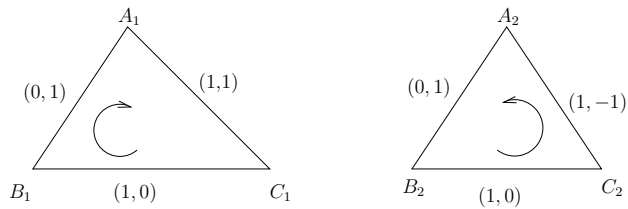} } }
\caption {The characteristic functions corresponding to a triangle.}
\label{egch001}
\end{figure}

The quasitoric manifold corresponding to the first characteristic pair is $\CP^2$ with the usual ${\TT}^2$ action
and standard orientation, we denote it by $\CP^2$. The
second characteristic pair correspond to the same ${\TT}^2$ action with the reverse orientation on $\CP^2$, we
denote this quasitoric manifold by $\bar{\CP^2}$.}
\end{eg}

Note that there are many non-equivariant ${\TT}^2$ actions on $\CP^2$, see Section 6 of \cite{Sar}.

\begin{eg}[Example 1.19, \cite{DJ}]\label{square}
\rm{
Suppose that $P$ is combinatorially a rectangle in $\RR^2$. In this case there are many possible characteristic
functions. Some examples are given by Figure \ref{egch002}.
\begin{figure}[ht]
\centerline{ \scalebox{0.64}{\input{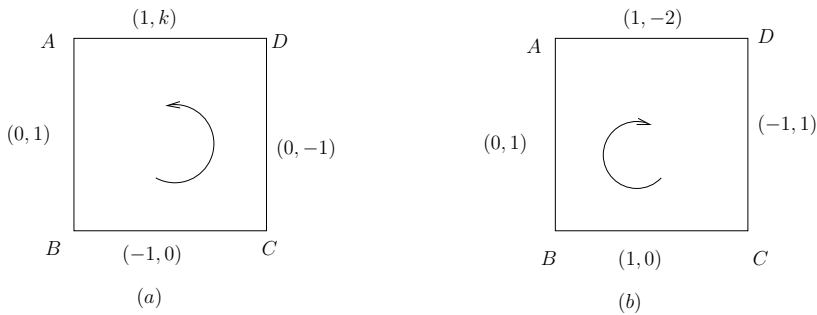} } }
\caption {Some characteristic functions corresponding to a rectangle.}
\label{egch002}
\end{figure}

The first characteristic pairs may construct an infinite family of $4$-dimensional quasitoric manifolds, denote
them by $M_k$ for each $k \in \ZZ$. By Proposition \ref{clema3}, the manifolds $\{M_k : k \in \ZZ\}$ are
equivariantly distinct. Let $L(k)$ be the complex line bundle over $\CP^1$ with the first Chern class $k$.
The complex manifold $\CP(L(k) \oplus \CC)$ is the Hirzebruch surface for each integer $k$, where $\CP(\cdot)$
denotes the projectivisation of a complex bundle. So, each Hirzebruch surface is the total space of the
bundle $\CP(L(k) \oplus \CC) \to \CP^1$ with fiber $\CP^1$. It is well-known that with the natural action
of ${\TT}^2$ on $\CP(L(k) \oplus \CC)$ it is equivariantly homeomorphic to $M_k$ for each $k$ (cf. \cite{Oda}).
That is, with respect to the ${\TT}^2$-action, Hirzebruch surfaces are quasitoric manifolds where the orbit
space is a combinatorial square and the corresponding characteristic maps are as in Fig. \ref{egch002} $(a)$.

On the other hand, the second combinatorial model (Figure \ref{egch002}$(b)$) gives the quasitoric manifold
$\CP^2 ~\# ~\CP^2$, the equivariant connected sum of $\CP^2$ (cf. \cite{OR}).}
\end{eg}

Equivariant connected sum of quasitoric manifolds is discussed in 1.11 of \cite{DJ} as well
as in Section 2 of \cite{Sar}. The following Proposition classifies all $4$-dimensional quasitoric manifolds.

\begin{prop} [{\cite[p. 553]{OR}}]\label{cla4}
Any $4$-dimensional quasitoric manifold $M^4$ over a $2$-polytope is an equivariant connected sum of some copies
of $\CP^2$, $\bar{\CP^2}$ and $M_k$ for some $k\in \ZZ$. In particular, $M^4$ is homeomorphic to a connected
sum of copies of $\mathbb{CP}^2$, $\bar{\mathbb{CP}^2}$ and $S^2 \times S^2$.
\end{prop}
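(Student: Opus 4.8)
The plan is to translate the statement into combinatorics and then induct on the number of edges of $P$. Since $P$ is a $2$-polytope it is combinatorially an $m$-gon for some $m\ge 3$; I label its edges cyclically $F_1,\ldots,F_m$ and set $\lambda_i=\xi(F_i)\in\ZZ^2$. The defining condition on the characteristic function is then exactly that $\det(\lambda_{i-1},\lambda_i)=\pm1$ for every vertex $F_{i-1}\cap F_i$ (indices read modulo $m$). By Proposition~\ref{clema3} the equivariant homeomorphism type of $M^4$ is determined by the cyclic sequence $(\lambda_1,\ldots,\lambda_m)$ up to relabeling and the allowed sign changes $\lambda_i\mapsto\pm\lambda_i$, so everything reduces to a statement about such sequences of primitive vectors.

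For the base cases I would record, using Example~\ref{triangle} together with Proposition~\ref{clema3}, that a triangle ($m=3$) yields only $\CP^2$ and $\bar{\CP^2}$; and that a square ($m=4$) yields either a Hirzebruch model $M_k$ or a manifold that already splits as an equivariant connected sum of two triangle pieces, as in Figure~\ref{egch002}(b). These give the building blocks $\CP^2$, $\bar{\CP^2}$ and $M_k$.

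The inductive step is the core. After fixing signs, the two neighbours of $\lambda_i$ satisfy a relation $\lambda_{i-1}+\lambda_{i+1}=a_i\lambda_i$ for a unique integer $a_i$, and the embedded $2$-sphere $\mathfrak{q}^{-1}(F_i)\subset M^4$ has self-intersection governed by $a_i$. When $a_i=\pm1$ this sphere is an equivariantly embedded $\CP^1$ (with one or the other orientation), and removing an invariant tubular neighbourhood exhibits $M^4$ as an equivariant connected sum $M'\#\CP^2$ or $M'\#\bar{\CP^2}$, where $M'$ is the quasitoric manifold over the $(m-1)$-gon obtained by contracting the edge $F_i$; the relation $\lambda_{i-1}+\lambda_{i+1}=\pm\lambda_i$ is precisely what guarantees that $\{\lambda_{i-1},\lambda_{i+1}\}$ is still a basis, so that the contracted polygon carries a genuine characteristic function. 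The combinatorial lemma I then need is that for $m\ge5$ some index has $a_i=\pm1$: summing the relations around the polygon gives $\sum_i a_i=3m-12$ in the genuinely convex case, which already forces some $a_i\le1$, and a short argument tracking the winding of the $\lambda_i$ upgrades this to an index with $a_i=\pm1$. Iterating drops $m$ by one at each stage until $m\in\{3,4\}$, which proves the first assertion.

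I expect the main obstacle to be exactly this reduction lemma in the full quasitoric generality. Because the $\lambda_i$ are defined only up to sign and need not bound a convex fan, one cannot simply invoke the classification of smooth complete toric surfaces, and the careful bookkeeping of signs and orientations — which also decides whether a reducible corner is split off as $\CP^2$ or as $\bar{\CP^2}$ — is precisely the analysis carried out by Orlik and Raymond. Finally, for the \emph{in particular} clause I would pass from the equivariant connected sum to the underlying topological connected sum (an equivariant connected sum at fixed points is an ordinary connected sum) and substitute the known homeomorphism type of each Hirzebruch block, namely $M_k\cong S^2\times S^2$ for $k$ even and $M_k\cong\CP^2\#\bar{\CP^2}$ for $k$ odd; every resulting summand is then one of $\CP^2$, $\bar{\CP^2}$, $S^2\times S^2$, giving the stated homeomorphism.
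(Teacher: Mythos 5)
The paper offers no proof of Proposition \ref{cla4} to compare against: it is quoted verbatim from Orlik and Raymond \cite[p.~553]{OR}, and the authors rely on the citation. Your proposal is in effect a blind reconstruction of the Orlik--Raymond argument, and its skeleton is the right one: encode $M^4$ by the cyclic sequence $(\lambda_1,\dots,\lambda_m)$ with $\det(\lambda_{i-1},\lambda_i)=\pm 1$ via Proposition \ref{clema3}; split off a $\CP^2$ or $\bar{\CP^2}$ by equivariantly blowing down an invariant sphere $\mathfrak{q}^{-1}(F_i)$ whenever $\lambda_{i-1}+\lambda_{i+1}=\pm\lambda_i$, which is exactly the condition for the contracted $(m-1)$-gon to still carry a characteristic function; terminate at the triangle and square base cases of Examples \ref{triangle} and \ref{square}; and obtain the ``in particular'' clause from $M_k\cong S^2\times S^2$ for $k$ even and $M_k\cong \CP^2\,\#\,\bar{\CP^2}$ for $k$ odd. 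All of those steps are sound.

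The genuine gap is precisely where you suspected it, but it is worse than your hedge suggests, because the one quantitative argument you do give is wrong. If all $a_i\geq 2$ then $\sum_i a_i\geq 2m$, and $2m\leq 3m-12$ as soon as $m\geq 12$; so the identity $\sum_i a_i=3m-12$ does \emph{not} by itself force some $a_i\leq 1$ for large $m$, let alone $a_i=\pm 1$ (ruling out the all-$a_i\geq 2$ configurations requires the completeness/convexity of the ray sequence, e.g.\ the observation that $a_i=2$ for all $i$ makes the $\lambda_i$ march off linearly and never close up). Moreover the identity itself presupposes that suitably sign-normalized $\lambda_i$ wind exactly once around the origin; in the quasitoric setting the vectors are defined only up to sign, the signs cannot in general be normalized consistently around the whole cycle (the last determinant may remain $-1$), and the correct statement is $\sum_i a_i=3m-12w$ with $w$ a winding number that need not be $1$. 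Thus the existence, for every $m\geq 5$, of an edge with $a_i=\pm 1$ --- which is the entire mathematical content of the proposition --- is left unproved and deferred to \cite{OR}. As a roadmap to how the cited result is actually established your proposal is accurate and honestly flags its debt, but as a self-contained proof it has a hole at its center, and the sum-of-$a_i$ shortcut offered to patch it does not work.
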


\begin{remark}
{\rm The decomposition in the above proposition of $M^4$ as a connected sum is not unique, see Remark 5.8 in
\cite{OR} for details.}
\end{remark}

Now, we recall a relation between toric varieties and quasitoric manifolds. For more details
about toric variety see Oda \cite{Oda} and Fulton \cite{Fu}.

A {\em toric variety} is a normal algebraic variety $M$ containing the algebraic torus $(\CC^{\ast})^n$
as a Zariski open subset in such a way that the natural action of $(\CC^{\ast})^n$ on itself extends to an action on $M$.
Thus $(\CC^{\ast})^n$ acts on $M$ with a dense orbit. 

A {\em convex polyhedral cone} (or simply {\em cone}) $\sigma$ spanned by $p_1, \ldots, p_k \in \RR^n$ is
$$
\sigma=\{r_1p_1 + \cdots + r_kp_k \in \RR^n : r_1, \ldots, r_k \geq 0 \}.
$$
A cone in $\RR^n$ is called nonsingular if it is generated by a part of a basis of $\ZZ^n$.
So a nonsingular cone does not contain a line through the origin in $\RR^n$. A {\em nonsingular fan} $\Sigma$
is a set of nonsingular cones in $\RR^n$ such that a face of a cone in $\Sigma$ is a cone in $\Sigma$ and the intersection
of two cones in $\Sigma$ is a face of each. A nonsingular fan is called {\em complete} if the union of all cones
in $\Sigma$ is $\RR^n$.

There is a one-to-one bridge from fans to toric varieties of complex dimension $n$ which produces a beautiful
connection between topology and combinatorics. We denote the corresponding toric variety of the fan $\Sigma$
by $M_{\Sigma}$.

Suppose $(P, \xi)$ is a characteristic pair and $F$ be a nonempty face. If $F=P$ then, we define $\sigma_P=\{0\}$
the origin of $\RR^n$. Otherwise $F=F_{i_1} \cap \ldots \cap F_{i_k}$ for a unique collection of facets
$F_{i_1}, \ldots, F_{i_k}$ of $P$. Then, we define $\sigma_F$ is the cone generated by $\xi(F_{i_1}), \ldots,
\xi(F_{i_k})$. So, by definition of characteristic function, $\sigma_F$ is a nonsingular cone for a nonempty
proper face $F$ of $P$. In general the collection $\Sigma_P=\{\sigma_F : F ~\mbox{is a face of}~ P\}$ is not a fan.
We say the characteristic pair is {\em complete} if $\Sigma_P$ is a complete fan.

On the other hand, if $M$ is a nonsingular projective toric variety then it is a quasitoric manifold and the 
corresponding characteristic pair is complete (see
Section 5.1.3 and 5.2.3 of \cite{BP}). From the discussion of these Sections we state the following proposition.

\begin{prop}\label{corf}
 Let $M$ be a quasitoric manifold manifold with characteristic pair $(P, \xi)$. Then, $M$ is projective
 toric variety if and only if $(P, \xi)$ is complete.
\end{prop}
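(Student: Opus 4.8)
The plan is to prove both directions of the equivalence by relating the combinatorial datum $(P,\xi)$ to the fan-theoretic description of toric varieties. The key background is the statement quoted just before the proposition: if $M$ is a nonsingular projective toric variety, then $M$ is a quasitoric manifold and its associated characteristic pair is complete; conversely the dictionary between complete nonsingular fans and smooth complete toric varieties is the standard correspondence $\Sigma \mapsto M_\Sigma$. So the task is essentially to organize these two facts into a clean iff and to verify that the two constructions (quasitoric-from-$(P,\xi)$ and toric-from-$\Sigma$) agree on the nose when $(P,\xi)$ is complete.

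For the forward direction, suppose $M$ is a projective toric variety. Being projective it is in particular a nonsingular complete toric variety, so it arises from a complete nonsingular fan $\Sigma$. By the result of Sections 5.1.3 and 5.2.3 of \cite{BP} cited above, $M$ is then a quasitoric manifold and its characteristic pair is complete; since the characteristic pair of $M$ is unique up to the sign ambiguity (and $\Sigma_P$ is insensitive to the signs of the generators, as each $\sigma_F$ is a cone on the $\xi(F_{i_j})$), the pair $(P,\xi)$ we are given must itself be complete. For the converse, suppose $(P,\xi)$ is complete, i.e.\ $\Sigma_P = \{\sigma_F : F \text{ a face of } P\}$ is a complete fan. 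By construction each $\sigma_F$ is generated by a subset of the $\xi(F_{i_j})$, which form part of a $\ZZ$-basis by the defining property of a characteristic function, so every cone of $\Sigma_P$ is nonsingular; hence $\Sigma_P$ is a complete nonsingular fan. The fan-to-variety correspondence then produces a smooth complete toric variety $M_{\Sigma_P}$, and completeness together with nonsingularity forces projectivity via the standard criterion. Finally one checks that $M_{\Sigma_P}$ is equivariantly the quasitoric manifold $M(P,\xi)$: both carry the orbit space $P$ with the same isotropy data prescribed along each face by the vectors $\xi(F_{i_j})$, so by Proposition \ref{clema3} they are equivariantly homeomorphic, and in particular $M$ is a projective toric variety.

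The single point requiring care, and the main obstacle, is the passage from ``complete nonsingular fan'' to ``\emph{projective} toric variety.'' A complete smooth toric variety need not be projective in general, so one cannot simply invoke completeness. The resolution is that $\Sigma_P$ is not an arbitrary complete fan: it is the \emph{normal fan} of the simple polytope $P$ (with the cone $\sigma_F$ dual to the face $F$), and the toric variety of the normal fan of a polytope is always projective, the polytope furnishing an ample divisor. Thus I would emphasize that $\Sigma_P$ is polytopal by its very definition from $(P,\xi)$, which is exactly what guarantees projectivity and closes the converse direction.

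This reduces the proposition to assembling the cited structural results plus the normal-fan observation, with the verification that the two manifold constructions coincide handled by Proposition \ref{clema3}; I would present it compactly rather than re-deriving the fan--variety dictionary.
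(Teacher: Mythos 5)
Your forward direction is fine, and it coincides with everything the paper does: the paper offers no independent argument for Proposition \ref{corf} at all, but extracts it from the discussion in Sections 5.1.3 and 5.2.3 of \cite{BP}. Your converse, however, breaks at exactly the point you yourself flagged as the main obstacle, because your resolution of it is false. The fan $\Sigma_P$ is \emph{not} ``the normal fan of the simple polytope $P$ by its very definition'': the cone $\sigma_F$ is spanned by the characteristic vectors $\xi(F_{i_1}),\dots,\xi(F_{i_k})$, and a characteristic function is constrained only by the unimodularity condition at each face -- nothing ties $\xi(F_i)$ to the normal vector of the facet $F_i$ in any convex realization of $P$. Completeness of $(P,\xi)$ in the paper's sense says only that the cones $\sigma_F$ assemble into a complete nonsingular fan whose face poset is dual to that of $P$; being combinatorially dual to a polytope does not make a fan polytopal. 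Concretely, take Oda's complete nonsingular non-projective fan $\Sigma$ in $\RR^3$: its underlying simplicial complex is a triangulation of $S^2$, hence (by Steinitz) combinatorially the boundary of a simplicial $3$-polytope $Q$; set $P = Q^\ast$ and let $\xi$ assign to each facet of $P$ the primitive generator of the corresponding ray of $\Sigma$. Then $(P,\xi)$ is a complete characteristic pair with $\Sigma_P = \Sigma$, yet $M(P,\xi) \cong M_{\Sigma}$ is a complete smooth toric variety that is not projective. So there is no ample divisor to be furnished, and your converse delivers only completeness, not projectivity.

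Two further remarks to place this against the paper. First, the proposition is invoked in this paper only for $n=2$ (quasitoric $4$-manifolds over polygons), where the gap is harmless: every complete nonsingular $2$-dimensional fan is the normal fan of a lattice polygon, equivalently every smooth compact toric surface is projective, so in the cases actually used you can replace the false normal-fan claim by this classical surface fact and your argument closes. In dimension $\geq 3$, ``projective'' corresponds to $\Sigma_P$ being the normal fan of \emph{some} convex realization, which is strictly stronger than completeness; that stronger hypothesis is presumably what the discussion in \cite{BP} that the paper cites actually encodes, and the statement of Proposition \ref{corf} as printed glosses over the distinction just as your proof does. Second, the remaining ingredients of your proposal are sound: nonsingularity of every $\sigma_F$ follows from the definition of a characteristic function as you say, and the identification of $M_{\Sigma_P}$ with $M(P,\xi)$ via matching characteristic pairs and Proposition \ref{clema3} is the right mechanism once projectivity (or at least the quasitoric structure on $M_{\Sigma_P}$) is in hand.
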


\subsection{Triangulation}

We recall some basic definitions and examples in triangulation. All simplicial complexes considered here are
finite and the empty set is a simplex (of dimension $-1$) of every simplicial complex. If $\sigma = \{v_1, \dots,
v_k\}$ is a simplex in a simplicial complex then, $\sigma$ is also denoted by $v_1v_2\cdots v_k$. The vertex set
of a simplicial complex $X$ is denoted by $V(X)$.

If $X$, $Y$ are two simplicial complexes, then a {\em simplicial isomorphism} from $X$ to $Y$ is a bijection $\pi
: V(X) \to V(Y)$ such that for $\sigma\subseteq V(X)$, $\sigma$ is a face of $X$ if and only if $\pi (\sigma)$ is
a face of $Y$. Two simplicial complexes $X$, $Y$ are called {\em isomorphic} (denoted by $X \cong Y$) when such
an isomorphism exists.

We identify two complexes if they are isomorphic. We also identify a simplicial complex with the set of maximal
faces in it. For any two simplicial complexes $X$ and $Y$, their {\em join} $X \ast Y$ is the simplicial complex
whose faces are the disjoint unions of the faces of $X$ with the faces of $Y$.

A subcomplex $Z$ of a simplicial complex $X$ is said to be an {\em induced} subcomplex if every face of $X$
contained in $V(Z)$ is a face of $Z$. If $\sigma$ is a face of a simplicial complex $X$ then, the {\em link} of
$\sigma$ in $X$, denoted by ${\rm lk}_X(\sigma)$, is the simplicial complex whose faces are the faces $\tau$ of
$X$ such that $\tau \cap \sigma = \emptyset$ and $\sigma\cup\tau \in X$.

A $d$-dimensional simplicial complex is called {\em pure} if all its maximal faces (called {\em facets}) are
$d$-dimensional. A $d$-dimensional pure simplicial complex is called a {\em weak pseudomanifold} if each of its
$(d - 1)$-faces is in at most two facets.

For a $d$-dimensional weak pseudomanifold $X$, the {\em boundary} $\partial X$ of $X$ is the pure $(d-
1)$-dimensional subcomplex of $X$ whose facets are those $(d-1)$-dimensional faces of $X$ which are contained in
unique facets of $X$. For a simplicial complex $X$, $|X|$ denotes the {\em geometric carrier} of $X$. It may be
described as the subspace of $[0, 1]^{V(X)}$ consisting of all functions $f \colon V(X) \to [0, 1]$ satisfying
(i) ${\rm Support}(f) \in X$ and (ii) $\sum_{x\in V(X)} f(x) = 1$.

If a topological space $M$ is homeomorphic to $|X|$ then, we say that $X$ {\em triangulates} $M$. If $|X|$ is a
topological $d$-ball (resp., $d$-sphere) then,  $X$ is called a {\em triangulated $d$-ball} (resp., {\em
$d$-sphere}). If $|X|$ is a pl manifold (with the pl structure induced by $X$) (resp., topological manifold) then
$X$ is called a {\em combinatorial manifold} (resp., triangulated manifold).

For $1\leq d\leq 4$, $X$ is a combinatorial $d$-manifold if and only if the vertex links are triangulated
$(d-1)$-spheres or $(d -1)$-balls if and only if $X$ is a triangulated $d$-manifold.

For a finite set $\alpha$, let $\overline{\alpha}$ (respectively $\partial \alpha$) denote the simplicial complex
whose faces are all the subsets (respectively, all proper subsets) of $\alpha$. Thus, if $\#(\alpha)= n \geq 2$,
$\overline{\alpha}$ is a copy of the standard triangulation $B^{\,n - 1}_n$ of the $(n- 1)$-dimensional ball, and
$\partial \alpha$ is a copy of the standard triangulation $S^{\,n-2}_n$ of the $(n-2)$-dimensional sphere (and is
also denoted by $S^{\hspace{.2mm}n-2}_n(\alpha)$). Thus, for any two disjoint finite sets $\alpha$ and $\beta$,
$\overline{\alpha}\ast\partial\beta$ and $\partial\alpha \ast \overline{\beta}$ are two triangulations of a ball;
they have identical boundaries, namely $(\partial {\alpha}) \ast (\partial {\beta})$.

By a {\em subdivision} of a simplicial complex  $X$ we mean a simplicial complex $X^{\hspace{.1mm}\prime}$
together with a homeomorphism from $|X^{\hspace{.1mm}\prime}|$ onto $|X|$ which is facewise linear. Two complexes
$X$, $Y$ have isomorphic subdivisions if and only if $|X|$ and $|Y|$ are piecewise linear homeomorphic.

If $\alpha$ is a face of a simplicial complex $X$ and $u\not\in V(X)$ then, consider the simplicial complex (on
the vertex set $V(X) \cup \{u\}$)
\begin{equation}
 \tau^u_{\alpha}(X) := \{\sigma \in X : \alpha \not\subseteq \sigma\} \cup
\overline{u}\ast \partial\alpha \ast {\rm lk}_X(\alpha).
\end{equation}
Then,  $\tau^u_{\alpha}(X)$ is a subdivision of $X$ and
is called the subdivision obtained from $X$ by {\em starring the new vertex $u$ in $\alpha$}. A simplicial
complex $Y$ is called a {\em stellar} subdivision of $X$ if $Y$ is obtained from $X$ by starring (successively)
finitely many vertices. Two simplicial complexes $Y$ and $Z$ are called {\em stellar equivalent} if they have
isomorphic stellar subdivisions. So, if two simplicial complexes are stellar equivalent then they triangulate the
same topological space.

If $X$ is a $d$-dimensional simplicial complex with an induced subcomplex $\overline{\alpha} \ast \partial \beta$
($\alpha \neq \emptyset$, $\beta \neq \emptyset$) of dimension $d$ (thus, $\dim(\alpha)+ \dim(\beta) = d$), then
\begin{equation}
 \kappa_{\alpha}^{\beta}(X) := (X \setminus (\overline{\alpha}\ast \partial \beta)) \cup (\partial\alpha \ast
\overline{\beta})
\end{equation}
 is clearly another triangulation of the same topological space $|X|$. In this case,
$\kappa_{\alpha}^{\beta}(X)$ is said to be obtained from $X$ by the {\em bistellar move} $\alpha \mapsto \beta$.
If $\dim(\beta) = i$ ($0\leq i \leq d$), we say that $\alpha \mapsto \beta$ is a {\em bistellar move of index
$i$} (or an {\em $i$-move}, in short). Clearly, if $\kappa_{\alpha}^{\beta}(X)$ is obtained from $X$ by an
$i$-move $\alpha \mapsto \beta$ then $X$ is obtained from $\kappa_{\alpha}^{\beta}(X)$ by the (reverse) $(d-
i)$-move $\beta \mapsto \alpha$, i.e., $\kappa_{\beta}^{\alpha}(\kappa_{\alpha}^{\beta}(X)) =X$. Notice that, in
case $i=0$, i.e., when $\beta$ is a single vertex, we have $\partial \beta = \{\emptyset\}$ and hence
$\overline{\alpha} \ast \partial \beta = \overline{\alpha}$. Therefore, our requirement that $\overline{\alpha}
\ast \partial \beta$ is the induced subcomplex of $X$ on $\alpha \sqcup \beta$ means that $\beta$ is a new
vertex, not in $X$. Thus, a $0$-move creates a new vertex, and correspondingly a $d$-move deletes an old vertex.

For a simplicial complex $X$, if $\sim$ is an equivalence relation on the vertex set $V(X)$ then consider the
simplicial complex (called the {\em quotient complex}) $X/\hspace{-1.5mm}\sim$ whose vertices are the
$\sim$-classes and $\{C_1, \dots, C_k\}$ is a face of $X/\hspace{-1.5mm}\sim$ if $C_1\cup \cdots \cup C_k$ is a
face of $X$.

If $\phi= u_1\cdots u_{i-1}u_iu_{i+1}\cdots u_k$ is a loop in a simplicial complex $X$ and $u_{i-1}u_iu_{i+1}$ is a
face of $X$ then, $\phi$ is equivalent to the loop $\varrho= u_1 \cdots u_{i-1}u_{i +1}\cdots u_k$ (denoted by
$\phi \simeq \varrho$) and hence $\phi$ and $\varrho$ represent the same element in the fundamental group $\pi_1(X, u_1)$.

Let $X_1$, $X_2$ be two triangulated closed $d$-manifold with disjoint vertex-sets, $\sigma_i$ a facet of $X_i$
($i=1, 2$) and $\psi \colon \sigma_1 \to \sigma_2$ a bijection. Let $(X_1\sqcup X_2)^{\psi}$ denote the
simplicial complex obtained from $X_1 \sqcup X_2 \setminus \{\sigma_1, \sigma_2\}$ by identifying $x$ with
$\psi(x)$ for each $x\in \sigma_1$. Then, $(X_1\sqcup X_2)^{\psi}$ is called an {\em elementary connected sum} of
$X_1$  and $X_2$, and is denoted by $X_1 \#_{\psi} X_2$ (or simply by $X_1\# X_2$). Note that the combinatorial
type of $X_1 \#_{\psi} X_2$ depends on the choice of the bijection $\psi$. However, $|X_1 \#_{\psi} X_2|$ is the
topological connected sum of $|X_1|$ and $|X_2|$ (taken with appropriate orientations). Thus, $X_1 \#_{\psi} X_2$
is a triangulated closed $d$-manifold (\cite[Definition 1.3]{BD}).

In \cite{KB}, K\"uhnel and Banchoff constructed a 9-vertex vertex minimal triangulation of $\CP^2$ and in
\cite{Lu}, Lutz has constructed an 11-vertex vertex minimal triangulation of $S^2 \times S^2$. Using these we get
the following proposition.

\begin{prop} \label{sigma123}
For $j+k+l\geq 1$, the manifold $j \mathbb{CP}^2 \# k \bar{\mathbb{CP}^2} \# \ell(S^2 \times S^2)$ has a
triangulation with $4j + 4k +6\ell +5$ vertices.
\end{prop}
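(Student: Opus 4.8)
The plan is to build the manifold by iterating the elementary connected sum on the known vertex-minimal triangulations and then to bookkeep the vertex count. First I would record the two building blocks. By K\"uhnel--Banchoff \cite{KB} there is a $9$-vertex triangulation $K$ of $\CP^2$, and by Lutz \cite{Lu} there is an $11$-vertex triangulation $L$ of $S^2 \times S^2$. Since $\bar{\CP^2}$ is $\CP^2$ equipped with the reversed orientation, the same complex $K$ also triangulates $\bar{\CP^2}$ as an unoriented space; which of the two oriented manifolds a given copy of $K$ represents in a connected sum will be decided entirely by the gluing bijection, so no separate triangulation of $\bar{\CP^2}$ is needed.

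The central computation is the effect of a single elementary connected sum on the number of vertices. A facet of a triangulated closed $4$-manifold is a $4$-simplex, hence has five vertices, and forming $X_1 \#_{\psi} X_2$ deletes the two facets $\sigma_1,\sigma_2$ and identifies the five vertices of $\sigma_1$ with the five vertices of $\sigma_2$ through $\psi$. Therefore $\#(V(X_1 \#_{\psi} X_2)) = \#(V(X_1)) + \#(V(X_2)) - 5$. As recalled in the preliminaries, $X_1 \#_{\psi} X_2$ is again a triangulated closed $4$-manifold whose geometric carrier is the topological connected sum of $|X_1|$ and $|X_2|$; note also that the gluing is always admissible, because the boundary of each removed facet is a copy of the standard $S^3_5$ and every vertex bijection between two such boundaries is a simplicial isomorphism.

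Now set $n = j + k + \ell$. I would take $j$ disjoint copies of $K$ together with a further $k$ disjoint copies of $K$ (the latter destined to be the $\bar{\CP^2}$ summands) and $\ell$ disjoint copies of $L$, relabelling so that all vertex sets are pairwise disjoint, and then perform $n-1$ elementary connected sums to join them into a single complex. Each such step lowers the total vertex count by $5$, so the resulting complex has
$$
9j + 9k + 11\ell - 5(n-1) = 4j + 4k + 6\ell + 5
$$
vertices, which is exactly the asserted number. The case $n = 1$ needs no gluing at all: a single copy of $K$ or of $L$ already realises the formula, since $4 \cdot 1 + 5 = 9$ and $6 \cdot 1 + 5 = 11$.

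The main point to get right is the orientation, and this is the step I expect to require the most care. Because $\CP^2 \# \CP^2$ and $\CP^2 \# \bar{\CP^2}$ are distinct manifolds, each connected sum must realise the intended oriented summand rather than its reversal. This is handled through the freedom in choosing $\psi$: the combinatorial type of $X_1 \#_{\psi} X_2$ depends on $\psi$, and replacing $\psi$ by a bijection that reverses the induced boundary orientation interchanges a summand with its orientation reversal. Thus, by selecting for each of the $k$ copies of $K$ a gluing bijection that reverses the boundary orientation while keeping the orientation-preserving choice for the $j$ copies, the final triangulation has geometric carrier $j\CP^2 \# k\bar{\CP^2} \# \ell(S^2 \times S^2)$, which completes the argument.
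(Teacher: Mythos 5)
Your proof is correct and follows essentially the same route as the paper: iterate the elementary connected sum on the $9$-vertex $\CP^2$ of K\"uhnel--Banchoff and the $11$-vertex $S^2\times S^2$ of Lutz, with each sum removing $5$ vertices, giving $9j+9k+11\ell-5(j+k+\ell-1)=4j+4k+6\ell+5$. The only cosmetic difference is that the paper takes a separate $9$-vertex triangulation of $\bar{\CP^2}$ (relying on ``appropriate orientations'' in the connected-sum construction), whereas you reuse the same complex and make the orientation bookkeeping explicit through the choice of the gluing bijection $\psi$ --- a slightly more careful rendering of the same argument.
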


\begin{proof}
Let $X$ be a 9-vertex triangulation of $\CP^2$. Since $\bar{\CP^2}$ is homeomorphic to $\CP^2$, $\bar{\CP^2}$ has
a 9-vertex triangulation $Y$. Let $Z$ be an 11-vertex triangulation of $S^2 \times S^2$.

If $M$ and $N$ are triangulated closed $d$-manifolds with $m$ and $n$ vertices respectively then,  the elementary
connected sum $M \# N$ of $M$ and $N$ is an $(m+n-d-1)$-vertex connected triangulated closed $d$-manifold and
triangulates $|M|\# |N|$. Let $\Sigma_{jk\ell}$ be the elementary connected sum of $j$ copies of $X$, $k$ copies
of $Y$ and $\ell$ copies of $Z$. Then, $\Sigma_{jk\ell}$ is a triangulated closed 4-manifold (and hence a
combinatorial 4-manifold) with $9j+9k+11\ell - (j+k+\ell-1)\times 5$ vertices and triangulates $j \mathbb{CP}^2
\# k \bar{\mathbb{CP}^2} \# \ell(S^2 \times S^2)$. (If one of $j, k, \ell$ is 0 then, we do not take the
corresponding triangulated manifold in the connected sum.) This proves the result.
\end{proof}

\subsection{Three dimensional Lens spaces}

The $3$-dimensional lens spaces were introduced by Tietze, \cite{Tie}. Let $p$ and $q$ be relatively prime
integer. Consider $S^3= \{(z_1, z_2) \in \CC^2: |z_1|^2 + |z_2|^2 =1\}$. Then, the action of $\ZZ_p= \ZZ/ p\ZZ$ on
$S^3$ generated by $e^{2\pi i/p}. (z_1, z_2) = (e^{2\pi i/p}z_1, e^{2\pi iq/p}z_2)$ is free since $p$ and $q$ are
relatively prime. The quotient space of this action is called the {\rm lens space} and is denoted by $L(p, q)$.
The lens space $L(p, q)$ is homeomorphic to $L(p, r)$ if and only if $r = \pm q$ (mod $p$) or $qr = \pm 1$ (mod
$p$) (\cite{Man}).

Let $m, n \in \ZZ$ so that ${\rm det}((m,n)^t, (p,q)^t) = 1$. Let $S^1 \times D^2$ be a solid torus considered as
a subset of $\CC^2$, that is $S^1 \times D^2 = \{(z, w) \in \CC^2 : |z| = 1, |w| \leq 1\}$. We will also think
${\TT}^2 = \partial(S^1 \times D^2) = S^1 \times \partial D^2 = S^1 \times S^1$ as a subset of $\CC^2$. Let
$\varphi_{(p,q)}: {\TT}^2 \to {\TT}^2$ be the map $\varphi_{(p,q)}(z, w)= (z^m w^p, z^n w^q)$. Clearly this is a
homeomorphism of ${\TT}^2$. It is well known that the lens space $L(p, q)$ is the identification space of the
disjoint union $S^1 \times D^2 \sqcup S^1 \times D^2$ with $(z,w) \sim \varphi_{(p,q)}(z,w)$ for all $(z,w) \in
{\TT}^2$.
It is known that 
\begin{align}
 L(p,q) & = L(-p, -q)= \bar{L(-p,q)}= \bar{L(p, -q)} = \bar{L(p, p-q)}, 
\end{align}
where $\bar{L(k, \ell)}$ denotes $L(k, \ell)$ with the opposite orientation.

\begin{qn}
{\rm Can we construct a triangulation of ${\TT}^2$ such that $\varphi_{(1, 0)}, \varphi_{(0,1)}$ and
$\varphi_{(p,1)}$ are simplicial?}
\end{qn}

\begin{qn}
{\rm Let $A_1, \ldots, A_k \in GL(n, \ZZ)$ $n\geq 2, k\geq 1$. What are the vertex minimal triangulations
of $\TT^n$ so that the homomorphisms determined by $A_1, \ldots, A_k$ are simplicial? }
\end{qn}

\section{Some sequences of triangulated solid tori and lens spaces}\label{tortri}

In this section we present some triangulated $S^1 \times D^2$, $S^3$, $S^1 \times S^2$, $\mathbb{RP}^3$,
$L(3, 1)$, $L(4, 1)$, $L(5,2)$ and $L(7, 2)$. We need these in Sections \ref{m4}, \ref{m5} and \ref{m6}.
\begin{eg}\label{tst1}
{\rm We know that any triangulation of the 2-dimensional torus $S^1 \times S^1$ requires at least 7 vertices and up to
isomorphism there is a unique triangulation $\mathcal{T}$ of $S^1 \times S^1$, where
\begin{eqnarray} \label{ET}
\mathcal{T}= \{013, 124, 235, 346, 045, 156, 026, 023, 134, 245, 356, 046, 015, 126\}.
\end{eqnarray}

\setlength{\unitlength}{3mm}

\begin{picture}(44,16.5)(0,2.5)

\thicklines

\put(2,10){\line(1,0){21}}

\put(2,10){\line(0,1){4}}
\put(2,14){\line(1,0){21}} \put(23,10){\line(0,1){4}}

\put(30,5){\line(1,0){12}} \put(30,5){\line(0,1){12}}
\put(30,17){\line(1,0){12}} \put(42,5){\line(0,1){12}}

\put(30,5){\line(1,1){12}}


\thinlines

\put(5,10){\line(0,1){4}} \put(8,10){\line(0,1){4}}
\put(11,10){\line(0,1){4}} \put(14,10){\line(0,1){4}}
\put(17,10){\line(0,1){4}} \put(20,10){\line(0,1){4}}

\put(2,10){\line(3,4){3}} \put(5,10){\line(3,4){3}}
\put(8,10){\line(3,4){3}} \put(11,10){\line(3,4){3}}
\put(14,10){\line(3,4){3}} \put(17,10){\line(3,4){3}}
\put(20,10){\line(3,4){3}}

\put(34,5){\line(0,1){4}} \put(38,13){\line(0,1){4}}
\put(30,9){\line(1,0){4}} \put(38,13){\line(1,0){4}}

\put(42,9){\line(-1,1){4}} \put(30,13){\line(1,1){4}}
\put(34,5){\line(1,2){4}} \put(34,5){\line(2,1){8}}
\put(34,9){\line(-1,1){4}} \put(34,9){\line(1,2){4}}
\put(30,13){\line(2,1){8}} \put(38,5){\line(1,1){4}}

\put(30,5){\vector(1,0){3}} \put(30,5){\vector(0,1){2.5}}
\put(30,5){\vector(0,1){3}} \put(30,5){\vector(1,1){2}}
\put(30,5){\vector(1,1){2.25}} \put(30,5){\vector(1,1){2.5}}


\put(30,3.7){\mbox{0}}  \put(34,3.7){\mbox{1}}
\put(38,3.7){\mbox{6}}  \put(42,3.7){\mbox{0}}
\put(29,8.7){\mbox{2}} \put(29,12.7){\mbox{5}}
\put(42.5,8.7){\mbox{2}} \put(42.5,12.7){\mbox{5}}
\put(30,17.3){\mbox{0}}  \put(34,17.3){\mbox{1}}
\put(38,17.3){\mbox{6}}  \put(42,17.3){\mbox{0}}
\put(34.5,8){\mbox{3}} \put(37,13){\mbox{4}}

\put(2,8.8){\mbox{0}} \put(5,8.8){\mbox{1}} \put(8,8.8){\mbox{2}}
\put(11,8.8){\mbox{3}} \put(14,8.8){\mbox{4}}
\put(17,8.8){\mbox{5}} \put(20,8.8){\mbox{6}}
\put(22.5,8.8){\mbox{0}}

\put(2,14.2){\mbox{2}} \put(5,14.2){\mbox{3}}
\put(8,14.2){\mbox{4}} \put(11,14.2){\mbox{5}}
\put(14,14.2){\mbox{6}} \put(17,14.2){\mbox{0}}
\put(20,14.2){\mbox{1}} \put(22.5,14.2){\mbox{2}}

\put(8,7){\mbox{$(a)$}} \put(27,6){\mbox{$(b)$}}

\put(5,4.5){\mbox{\bf Figure\,2a\,: 7-vertex torus \boldmath{$\mathcal{T}$}}}
\end{picture}

Here the vertex set of $\mathcal{T}$ is $\{0, 1, \dots, 6\}$. If we identify the vertex set with $\ZZ_7=
\ZZ/7\ZZ$ then, $\ZZ_7$-acts on $\mathcal{T}$ given by $i\mapsto i+1$. There are exactly three 7-vertex
triangulations of the solid torus $D^{\hspace{.2mm}2} \times S^1$ whose boundaries are $\mathcal{T}$. These are
the following (cf. \cite{BK})\,:
\begin{eqnarray} \label{ET123}
T_1 & := & \{0123, 1234, 2345, 3456, 0456, 0156, 0126\},  \nonumber\\
T_2 & := & \{0246, 1246, 1346, 1356, 0135, 0235, 0245\}, \nonumber\\
T_3 & := & \{0145, 1245, 1256, 2356, 0236, 0346, 0134 \}.
\end{eqnarray}
Observe that $T_1\cap T_2 = T_1\cap T_3 = T_2\cap T_3 = \mathcal{T} = \partial T_1 = \partial T_2 = \partial
T_3$. The triangulated $2$-manifold $\mathcal{T}$ has fourteen $2$-dimensional faces. Among the remaining
$\binom{7}{3} - 14 =21$ possible $2$-simplices, $7$ are in $T_1$, $7$ are in $T_2$ and $7$ are in $T_3$. For
$1\leq i \neq j\leq 3$, let 
$$
S_{ij} := T_i\cup T_j.
$$ 
Then, for any vertex $k\in \ZZ_7$, ${\rm lk}_{S_{ij}}(k) =
{\rm lk}_{T_i}(k) \cup {\rm lk}_{T_j}(k)$. Since ${\rm lk}_{T_{i}}(k)$ and ${\rm lk}_{T_{j}}(k)$ are $2$-discs
and ${\rm lk}_{T_{i}}(k)\cap {\rm lk}_{T_{j}}(k) = {\rm lk}_{T}(k)$ is a cycle, it follows that ${\rm
lk}_{S_{ij}}(k)$ is a $2$-sphere. Thus, $S_{ij}$ is a triangulated $3$-manifold without boundary. Since $S_{ij}$
has $7$ vertices, it triangulates the $3$-sphere $S^{\hspace{.3mm}3}$ (since a triangulated $3$-manifold without
boundary with at most $8$ vertices triangulates $S^{\hspace{.3mm}3}$ \cite{Alt}).}\qed
\end{eg}

\begin{remark}\label{R1}
{\rm Observe that $0160$, $0250$, $0340$ are loops in $\mathcal{T}$ and $\pi_1(\mathcal{T}, 0) = \langle\alpha_1,
\alpha_2\rangle = \langle \alpha_1, \alpha_3\rangle = \langle\alpha_2, \alpha_3\rangle \cong \ZZ \oplus \ZZ$,
where $\alpha_1 = [0160]$, $\alpha_2 = [0250]$, $\alpha_3 = [0340] = \alpha_1 + \alpha_2$ (cf. Fig. 2a).
Moreover, $\alpha_i = 0$ in $\pi_1(T_i, 0)$ for $1\leq i\leq 3$. Thus, for $1\leq i\neq j\leq 3$, the
triangulated $3$-manifold $S_{ij} =  T_i \cup T_j$ is simply connected and hence is a triangulation of
$S^{\hspace{.3mm}3}$. }
\end{remark}

\begin{eg} \label{ETjn}
{\rm Let $T_1, T_2, T_3$ be as in Eq. \eqref{ET123} with common vertex set $\ZZ_7$. Take nine pairwise disjoint
(and disjoint from the set $\ZZ_7$) countable sets 
\begin{align*}
& U_{1}= \{u_{1,n} : n\geq 0\}, \quad U_{2} = \{u_{2,n} : n\geq
0\}, \quad U_{3} = \{u_{3,n} : n\geq 0\}, \\
& V_{1} = \{v_{1,n} : n\geq 0\}, \quad V_2 = \{v_{2,n} : n\geq 0\},
\quad V_3= \{v_{3,n} : n\geq 0\}, \\
& W_{1} = \{w_{1,n} : n\geq 0\}, \quad W_{2} = \{w_{2,n} : n\geq 0\},
\quad W_{3} = \{w_{3,n} : n\geq 0\}.
\end{align*}
First consider the subdivisions $T_{1,0}$ and
$T^{\hspace{.3mm}\prime}_{1,0}$ of $T_1$ by adding new
vertices $u_{1,0}$, $v_{1,0}$ and $u_{1,0}$, $v_{1,0}$, $w_{1,0}$ respectively.
\begin{eqnarray} \label{T10}
T_{1,0} & := & \kappa^{u_{1,0}v_{1,0}}_{345}(\kappa^{2v_{1,
0}}_{016}(\kappa^{3v_{1,0}}_{456}(\tau^{v_{1,0}}_{056} (\kappa^{5u_{1,
0}}_{234}(\tau^{u_{1,0}}_{123}(T_1)), \nonumber \\
T^{\hspace{.3mm}\prime}_{1,0} & := &
\tau^{w_{1,0}}_{012}(T_{1,0}).
\end{eqnarray}
Observe that $T_1$ is the union of the triangulated 3-balls $B_1 =
\{0123, 1234, 2345\}$ and $B_{2} = \{3456, 0456, 0156, 0126\}$ and
$T_{1,0} = \kappa^{u_{1,0}v_{1,0}}_{345}((u_{1,0}\ast \partial
B_1)\cup (v_{1,0}\ast \partial B_2))$.

Consider the maps $f, g \, \colon \ZZ_7 \sqcup
(\sqcup_{i=1}^3(U_{i} \sqcup  V_{i} \sqcup W_{i})) \to \ZZ_7
\sqcup (\sqcup_{i=1}^3(U_{i} \sqcup  V_{i} \sqcup W_{i}))$  given
by\,:
\begin{eqnarray} \label{maps_fg}
&& f(i) = i+1, \, g(i) = 2i \, (\mbox{mod } 7) \, \mbox{ for } \, i \in \ZZ_7, \nonumber \\
&& f(u_{j,n}) = u_{j,n+1}, f(v_{j,n}) = v_{j,n+1}, f(w_{j,n}) = w_{j,n+1}, \, \mbox{ for } 1\leq j\leq 3,  n\geq
0 \, \mbox{ and } \nonumber \\ && g(u_{1,n}) = u_{2,2n}, \, g(u_{2,n}) = u_{3,2n}, \, g(u_{3,n}) = u_{1,2n}, \,
g(v_{1,n}) = v_{2,2n}, \, g(v_{2,n}) = v_{3,2n}, \nonumber \\ && g(v_{3,n}) = v_{1,2n}, \, g(w_{1,n}) = w_{2,2n},
\, g(w_{2,n}) = w_{3,2n}, \, g(w_{3,n}) = w_{1,2n}, \, \mbox{ for } \, n \geq 0.
\end{eqnarray}
Then, $f|_{\ZZ_7}$ is an automorphism of $T_j$ for $1\leq j\leq 3$, \, $g(T_1) = T_{2}$, \, $g(T_2) = T_{3}$, \,
$g(T_3) = T_{1}$ and $g \circ f = f^2 \circ g$. Let
\begin{eqnarray} \label{ET123n}
T_{j, 0}& := & g^{j-1}(T_{1,0}) \hspace{5mm} \mbox{ for } \, j=2, 3, \nonumber \\
T^{\hspace{.3mm}\prime}_{j, 0} & := & g^{j-
1}(T^{\hspace{.3mm}\prime}_{1, 0}) \hspace{5mm} \mbox{ for }
\, j=2, 3, \nonumber \\
T_{j,n} & := & \left\{
\begin{array}{ll}
f^n(T_{j,0}) & \mbox{ for }  \, 1\leq j\leq 3, \, 1\leq n \leq 6,
\\[1mm]
f^n(T^{\hspace{.3mm}\prime}_{j,0}) & \mbox{ for }  \, 1\leq j\leq 3, \, n \geq 7.
\end{array}
\right.
\end{eqnarray}
For $1\leq j\leq 3$, the number of vertices in $T_{j, n}$ is $7+2=9$ if $0 \leq n\leq 6$ and the number of
vertices in $T_{j, n}$ is $7+3=10$ if $n\geq 7$. }\qed
\end{eg}

\begin{lemma} \label{LT123n}
Let $\mathcal{T}$ be the torus given by Eq. \eqref{ET} and $T_1, T_2, T_3$ be given by the Eq. \eqref{ET123}. For
$1\leq j\leq 3$ and $n\geq 0$, the solid tori $T_{j, n}$ defined in Example $\ref{ETjn}$ satisfy the following.
\begin{enumerate}[{\rm (i)}]
\item $T_{j, n}$ is a subdivision of $T_j$ for  $1\leq j\leq 3$ and $n \geq  0$.
\item $T_{i,k} \cong T_{j, \ell}$, $T_{i, m} \cong T_{j, n}$ for $1\leq i, j \leq 3$, $0 \leq k, \ell \leq 6$,
$m, n \geq 7$.
\item $\partial T_{j, n} = \mathcal{T}$ for $1\leq j\leq 3$ and $n \geq   0$.
\item $T_{i} \cap T_{j, n} = \mathcal{T} = T_{i, m} \cap T_{j, n}$ for $1\leq i \neq j \leq 3$ and $m, n\geq 0$.
\item $T_{j,m} \cap T_{j, n} = \mathcal{T} = T_{j} \cap T_{j, \hspace{.2mm}l}$ for $1 \leq j\leq 3$, $m \neq
n\geq 0$ and $\ell\geq 7$.
\item $T_{i} \cup T_{j, n}$, $T_{i, m} \cup T_{j, n}$ triangulate  $S^{\hspace{.2mm}3}$ for $1 \leq i \neq j \leq
3$ and $m, n \geq  0$.
\item $T_{j, m} \cup T_{j, n}$, $T_{j} \cup T_{j, \ell}$ triangulate  $S^{\hspace{.25mm}2} \times S^1$ for $1
\leq j \leq 3$,  $m \neq n \geq 0$ and $\ell \geq  7$.
\end{enumerate}
\end{lemma}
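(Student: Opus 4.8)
The plan is to reduce everything to an explicit analysis of the two base complexes $T_{1,0}$ and $T^{\prime}_{1,0}$, and then to transport the conclusions to all indices $(j,n)$ by means of the simplicial maps $f$ and $g$. First I would read off from the defining formula that every elementary operation producing $T_{1,0}$ and $T^{\prime}_{1,0}$ acts on a face interior to $T_1$: the seven $2$-simplices $123,234,056,456,016,345,012$ that get starred or flipped are \emph{exactly} the triangles of $T_1$ that do not lie on $\mathcal T=\partial T_1$ (note $\mathcal T$ already carries all $\binom{7}{2}=21$ edges, so $T_1$ has no interior edge and no interior vertex). Hence each $\tau$ (a stellar subdivision) and each $\kappa$ (a bistellar move) leaves $\mathcal T$ untouched and does not change the underlying space, giving $|T_{1,0}|=|T^{\prime}_{1,0}|=|T_1|$ and $\partial T_{1,0}=\partial T^{\prime}_{1,0}=\mathcal T$; this is (i) and (iii) for $(1,0)$. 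Writing $T_1=B_1\cup B_2$ with $B_1=\{0123,1234,2345\}$, $B_2=\{3456,0456,0156,0126\}$, the two balls meet along the interface triangles $012$ and $345$, so after the coning step $(u_{1,0}\ast\partial B_1)\cup(v_{1,0}\ast\partial B_2)$ the only $\ZZ_7$-triangles off $\mathcal T$ are $012$ and $345$. The move $\kappa^{u_{1,0}v_{1,0}}_{345}$ deletes $345$ and the star $\tau^{w_{1,0}}_{012}$ deletes $012$, which yields the \emph{crucial bookkeeping fact}: the only $\ZZ_7$-triangle of $T_{1,0}$ off $\mathcal T$ is $012$, while $T^{\prime}_{1,0}$ has none, and neither complex has a $\ZZ_7$-tetrahedron.

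Next I would use that $f|_{\ZZ_7}\colon i\mapsto i+1$ and $g|_{\ZZ_7}\colon i\mapsto 2i$ are automorphisms of $\mathcal T$ (the first is the given $\ZZ_7$-action; the second because $g(T_1)=T_2$ forces $g(\mathcal T)=g(\partial T_1)=\partial T_2=\mathcal T$) and that $f,g$ act as simplicial isomorphisms on the enlarged vertex set. Since $T_{j,n}=f^{n}g^{\,j-1}(T_{1,0})$ for $0\le n\le 6$ and $T_{j,n}=f^{n}g^{\,j-1}(T^{\prime}_{1,0})$ for $n\ge 7$, every $T_{j,n}$ with $0\le n\le6$ is isomorphic to $T_{1,0}$ and every $T_{j,n}$ with $n\ge7$ is isomorphic to $T^{\prime}_{1,0}$, which is (ii) and carries (i),(iii) to all $(j,n)$. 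Transporting the bookkeeping fact, for $0\le n\le6$ the unique $\ZZ_7$-triangle of $T_{j,n}$ off $\mathcal T$ is $f^{n}g^{\,j-1}(012)$, an \emph{interior} triangle of $T_j$, and for $n\ge7$ there is none.

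For the intersection statements I would compare $\ZZ_7$-faces, using that new vertices carry distinct labels, so any face lying in two of these complexes uses only $\ZZ_7$-vertices and hence lies in $\mathcal T$ or is a $\ZZ_7$-triangle off $\mathcal T$. For (iv) with $i\ne j$, the extra triangle $f^{n}g^{\,j-1}(012)$ is interior to $T_j$, and since the seven interior triangles of $T_1,T_2,T_3$ partition the $21$ triangles off $\mathcal T$, it is not a face of $T_i$ (nor of $T_{i,m}$); symmetrically the extra triangle of the other piece is absent, so $T_i\cap T_{j,n}=\mathcal T=T_{i,m}\cap T_{j,n}$. For (v) with $i=j$, I would check that $f^{m}g^{\,j-1}(012)$ and $f^{n}g^{\,j-1}(012)$ are distinct whenever $m\not\equiv n\ (\mathrm{mod}\ 7)$, giving $T_{j,m}\cap T_{j,n}=\mathcal T$ for all $m\ne n\ge0$. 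The asymmetry worth underlining is why $T_j\cap T_{j,\ell}=\mathcal T$ only for $\ell\ge7$: $T_j$ contains \emph{all} its interior triangles, hence the extra triangle of $T_{j,\ell}$ whenever it exists (i.e. $\ell\le6$); only for $\ell\ge7$, where $T_{j,\ell}\cong T^{\prime}_{j,0}$ has no extra $\ZZ_7$-triangle, does the intersection collapse to $\mathcal T$.

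Finally, by (iv)/(v) every union in (vi) and (vii) meets along the full boundary $\mathcal T$, so I would first verify it is a closed combinatorial $3$-manifold: the link of a $\ZZ_7$-vertex $k$ is the union of the two boundary-vertex discs along their common bounding circle $\mathrm{lk}_{\mathcal T}(k)$, hence a $2$-sphere, and each new vertex is interior with spherical link. To identify the type, for (vi) I would note that $T_i\cup T_{j,n}$ and $T_{i,m}\cup T_{j,n}$ have the same underlying space as $S_{ij}=T_i\cup T_j$, which is $S^{3}$ by Example \ref{tst1} and Remark \ref{R1}, since refining one or both solid tori rel $\mathcal T$ does not change $|S_{ij}|$. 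For (vii), both pieces triangulate the \emph{same} solid torus $|T_j|$ and are glued by the identity on $\mathcal T$, so the union is the double $|T_j|\cup_{\mathrm{id}}|T_j|=S^1\times(D^2\cup_{\partial}D^2)=S^{2}\times S^1$. The \textbf{main obstacle} is the combinatorial bookkeeping of the first paragraph—tracking precisely which $\ZZ_7$-faces survive the interleaved stellar and bistellar operations—because once the single surviving triangle $012$ and its $\langle f,g\rangle$-orbit are pinned down, parts (ii)--(vii) are essentially formal.
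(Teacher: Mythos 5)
Your proposal is correct, and for parts (i)--(vi) it is essentially the paper's argument, only packaged more systematically: where the paper simply lists the extra $\ZZ_7$-triangles case by case ($\{m,m+1,m+2\}$ for $T_{1,m}$, $\{n,n+2,n+4\}$ for $T_{2,n}$, etc.), you derive the single surviving triangle $012$ of $T_{1,0}$ once from the decomposition $T_1=B_1\cup B_2$ and then propagate it as the $\langle f,g\rangle$-orbit $f^ng^{\,j-1}(012)$; this is the same underlying fact, and your verification that the six $\kappa/\tau$ operations touch exactly the seven interior triangles of $T_1$ (possible since $\mathcal{T}$ is $2$-neighborly, so $T_1$ has no interior edges or vertices) is a correct and slightly more explicit justification of what the paper dismisses with ``follow from the definition.'' The genuine divergence is in part (vii): the paper's primary proof builds an explicit $14$-vertex oriented complex $X$ triangulating $[0,1]\times S^2$, quotients it by the map $\varphi$ to obtain $S^2\times S^1$, checks $X/\!\sim\ \cong T_1\cup T^{\hspace{.2mm}\prime}_{1,0}$, and then transports the conclusion by stellar equivalence and the isomorphisms $g$, $g^2$; you instead argue softly that $T_{j,m}$ and $T_{j,n}$ triangulate the same solid torus and are glued by the identity on $\mathcal{T}$, so the union is the double $S^1\times(D^2\cup_{\partial}D^2)=S^2\times S^1$. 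That is precisely the alternative proof the paper itself records in Remark \ref{R2}, so it is sound; what the paper's explicit construction buys is a purely combinatorial certificate (and an orientability check via oriented facets), while your doubling argument is shorter but leans on the topological fact about doubles and on both complexes being subdivisions of the same $T_j$ rel $\mathcal{T}$ from part (i). Two small points worth tightening but not gaps: in (v) your case split ``$m\not\equiv n \pmod 7$'' should be complemented by the observation (which your next sentence implicitly supplies) that if $m\equiv n\pmod 7$ with $m\neq n$ then at least one index is $\geq 7$ and that side has no extra $\ZZ_7$-triangle; and for (iv) one should note, as you do, that no $T_{j,n}$ contains a $\ZZ_7$-tetrahedron, since $T_i$ itself does.
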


\begin{proof}
Parts (i) and (ii) follow from the definition of $T_{j, n}$ in \eqref{ET123n}. From \eqref{T10} it follows that
$\partial T_{1, 0} = \mathcal{T} = \partial T^{\hspace{.3mm}\prime}_{1, 0}$. Part (iii) now follows from the
definition of $T_{j,n}$ in \eqref{ET123n}.

To prove part (iv), assume that $(i, j) = (1, 2)$. (Similar arguments work for other values of $(i, j)$.)
Clearly, 
$$
V(T_{1, m}) \subseteq \ZZ_7\cup \{u_{1,m}, v_{1,m}, w_{1,m}\}~ \mbox{and}~ V(T_{2, n}) \subseteq
\ZZ_7\cup \{u_{2,n}, v_{2,n}, w_{2,n}\}.
$$ 
Let $\sigma$ be a face in $T_{1, m} \setminus \mathcal{T}$ (resp., in
$T_1\setminus \mathcal{T}$). Then, either $\sigma = \{m, m+1, m+2\}$ (when $m \leq 6$) or $\sigma$ contains a
vertex from $\{u_{1,m}, v_{1,m}, w_{1,m}\}$ (resp., $\sigma = \{i, i+1, i+2\}$ for some $i\in \ZZ_7$). Similarly,
if $\tau$ be a face in $T_{2, n} \setminus \mathcal{T}$ then either $\tau = \{n, n+2, n+4\}$ (when $n \leq 6$) or
$\tau$ contains a vertex from $\{u_{2,n}, v_{2,n}, w_{2,n}\}$. (Addition here is mod $7$.)  Thus, $\sigma \neq
\tau$. This proves part (iv)

To prove part (v), assume that $j=1$. (Similar arguments work for other values of $j$.)  Any face in $T_{1,m}
\setminus \mathcal{T}$ is either $\{m, m+1, m+2\}$ (when $m \leq 6$) or contains a vertex from $\{u_{1,m},
v_{1,m}, w_{1,m}\}$ and any face in $T_{1,n}\setminus \mathcal{T}$ is either $\{n, n+1, n+2\}$ (when $n \leq 6$)
or contains a vertex from $\{u_{1,n}, v_{1,n}, w_{1,n}\}$. Since $m \neq n$, there is no common face in
$T_{1,m}\setminus \mathcal{T}$ and in $T_{1,n}\setminus \mathcal{T}$. Thus, $T_{1, m} \cap T_{1, n} =
\mathcal{T}$. Similarly, $T_{1} \cap T_{1, \ell} = \mathcal{T}$ for $\ell\geq 7$. This proves part (v).

Since $T_{i, m}$ is a subdivision of $T_i$ for $1\leq i \leq 3$ and $m\geq 0$, $T_{i, m}\cup T_{j, n}$  is a
subdivision of $T_i \cup T_j = S_{ij}$ for $1\leq i \neq j \leq 3$. Part (vi) now follows since $S_{ij}$ is a
triangulation of $S^{\hspace{.2mm}3}$.

Consider the oriented $3$-dimensional weak pseudomanifold $X$ whose vertex set is $\{d_0, d_1$, $\dots, d_9, u,
x, a, b\}$ and oriented facets are
\begin{align*}
&d_{i}d_{i+1}d_{i+2}d_{i+3}, d_{0}d_{1}ua,
d_{0}d_{2}au, d_{1}d_{2}ua, d_jd_{j +1}d_{j+3}u, ud_{j}d_{j+2}d_{j+3},  \mbox{ for } 0\leq i \leq 6,
0\leq j\leq 2,
\\
& d_{k}d_{k
+1}d_{k+3}x,  xd_{k}d_{k+2}d_{k+3} \mbox{ for }  3\leq k\leq 6, ~ d_3d_4xu, d_4d_5xu, d_3d_5ux, d_{7}d_{8}bx,
d_7d_9xb, d_8d_9bx.
\end{align*}
Then, $X$ triangulates $[0, 1] \times S^{\hspace{.2mm}2}$ and the 2-spheres
$S^{\hspace{.2mm}2}_4(\{d_0, d_1, d_2, a\})$ and $S^{\hspace{.2mm}2}_4(\{d_7, d_8, d_9, b\})$ are induced and in
$\partial X$. The orientation on $X$ induces orientation on $S^{\hspace{.2mm}2}_4(\{d_0, d_1, d_2, a\})$ (namely
$\alpha$ is a positively oriented 2-face if $a\alpha$ is positively oriented in $X$) with the positively oriented
2-faces $d_0d_2d_1$, $d_2d_0a$, $d_1d_2a$, $d_0d_1a$. Similarly (with the induced orientation), the positively
oriented 2-faces in $S^{\hspace{.2mm}2}_4(\{d_7, d_8, d_9, b\})$ are $d_7d_8d_9$, $d_7d_9b$, $d_9d_8b$ and
$d_8d_7b$.

Now, consider the map 
$$
\varphi \colon \{d_0, d_1, d_2, a\} \to \{d_7, d_8, d_9, b\}
$$ 
given by $\varphi
(d_i) = d_{i+7}$ for $0 \leq i\leq 2$ and $\varphi(a) = b$. Then, $\varphi$ defines an equivalence relation $\sim$
on $V(X)$ (namely, $y\sim z$ if $y=z$, $\varphi(y) = z$ or $y = \varphi(z)$). Observe that if $\alpha$ is a
positively oriented 2-face, then $\varphi(\alpha)$ is negatively oriented. So, the quotient complex
$X/\hspace{-1.5mm}\sim$ is orientable and triangulates $S^{\hspace{.2mm}2} \times S^1$. It is easy to see that
$X/\hspace{-1.5mm}\sim$ is isomorphic to $T_1 \cup T_{1, 0}^{\hspace{.2mm}\prime}$. Therefore, $T_1 \cup T_{1,
0}^{\hspace{.2mm}\prime}$ triangulates $S^{\hspace{.2mm}2} \times S^1$. Since $T_{1, m} \cup T_{1,n}$ (resp.,
$T_{1} \cup T_{1,\ell}$) is stellar equivalent (resp., is isomorphic) to $T_1 \cup T_{1,0}^{\hspace{.2mm}\prime}$
for $m\neq n$ (resp. $\ell\geq 7$), it follows that $T_{1, m} \cup T_{1,n}$ and $T_{1} \cup T_{1,\ell}$
triangulate $S^{\hspace{.2mm}2}\times  S^1$. This proves part (vii) since $g^2 \colon T_{2, m} \cup T_{2, n} \to
T_{1, 4m} \cup T_{1,4n}$, $g \colon T_{3, m} \cup T_{3, n} \to T_{1, 2m} \cup T_{1,2n}$ are isomorphisms for
$m\neq n$ and $g^2 \colon T_{2} \cup T_{2, \ell} \to T_{1} \cup T_{1,4n}$, $g \colon T_{3} \cup T_{3, \ell} \to
T_{1} \cup T_{1,2l}$ are isomorphisms for $\ell \geq 7$.
\end{proof}

\begin{remark} \label{R2}
{\rm For $1\leq j \leq 3$ and $m \neq n$, $|T_{j,m}|$ and $|T_{j,n}|$ are two copies of the same solid torus
$|T_1|$ and (since $|T_{j,m}|$ and $|T_{j,n}|$ have no common interior face) $|T_{j,m} \cup T_{j,n}|$ is the
space obtained from the disjoint union of $|T_{j,m}|$ and $|T_{j,n}|$ by identifying their boundaries with the
identity map. This implies $|T_{j,m} \cup T_{j,n}|$ is homeomorphic to $S^{\hspace{.2mm}2}\times  S^1$.
Similarly, $|T_{j} \cup T_{j,\ell}|$ is homeomorphic to $S^{\hspace{.2mm}2}\times  S^1$ for $\ell\geq 7$. This
gives another proof of Lemma \ref{LT123n} (vii).}
\end{remark}

\setlength{\unitlength}{3mm}

\begin{picture}(48,33)(1,-1.5)

\thicklines

\put(2,5){\line(1,0){10}} \put(2,5){\line(0,1){22}}
\put(2,27){\line(1,0){10}} \put(12,5){\line(0,1){22}}
\put(2,5){\line(3,-2){3}}  \put(5,3){\line(1,0){4}}
\put(9,3){\line(3,2){3}} \put(12,5){\line(-1,1){2}}
\put(10,7){\line(-3,1){3}} \put(4,7){\line(3,1){3}}
\put(2,5){\line(1,1){2}}

\put(2,27){\line(3,-2){3}}  \put(5,25){\line(1,0){4}}
\put(9,25){\line(3,2){3}} \put(12,27){\line(-1,1){2}}
\put(10,29){\line(-3,1){3}} \put(4,29){\line(3,1){3}}
\put(2,27){\line(1,1){2}}

\put(7,5){\line(3,2){3}}\put(7,5){\line(1,-1){2}}
\put(7,5){\line(-3,2){3}} \put(5,3){\line(1,1){2}}
\put(7,5){\line(0,1){3}}

\put(7,27){\line(3,2){3}}\put(7,27){\line(1,-1){2}}
\put(7,27){\line(-3,2){3}} \put(5,25){\line(1,1){2}}
\put(7,27){\line(0,1){3}}


\put(18,3){\line(1,0){21}} \put(18,3){\line(0,1){9}}
\put(18,12){\line(1,0){21}} \put(39,3){\line(0,1){9}}

\put(30,0){\line(-4,1){12}} \put(30,0){\line(3,1){9}}
\put(18,12){\line(3,1){9}} \put(39,12){\line(-4,1){12}}


\put(22,19){\line(1,0){21}} \put(22,19){\line(0,1){9}}
\put(22,28){\line(1,0){21}} \put(43,19){\line(0,1){9}}

\put(34,16){\line(-4,1){12}} \put(34,16){\line(3,1){9}}
\put(22,28){\line(3,1){9}} \put(43,28){\line(-4,1){12}}

\thinlines

\put(2,5){\line(1,5){2}} \put(12,5){\line(-1,5){2}}
\put(4,15){\line(-1,6){2}} \put(10,15){\line(1,6){2}}

\put(4,15){\line(2,-1){2}} \put(6,14){\line(1,0){2}}
\put(8,14){\line(2,1){2}} \put(10,15){\line(-1,2){1}}
\put(9,17){\line(-2,1){2}} \put(4,15){\line(1,2){1}}
\put(5,17){\line(2,1){2}}

\put(7,22){\line(2,3){2}} \put(7,22){\line(1,1){5}}
\put(7,22){\line(-2,3){2}} \put(7,22){\line(-1,1){5}}
\put(7,18){\line(0,1){6.6}} \put(7,25.4){\line(0,1){1.6}}

\put(5,17){\line(2,5){2}} \put(9,17){\line(-2,5){2}}

\put(4,7){\line(1,3){2.15}} \put(10,7){\line(-1,3){2.15}}
\put(7,8){\line(0,1){5.5}}

\put(7,16){\line(-1,-3){0.6}} \put(7,16){\line(1,-3){0.6}}
\put(7,16){\line(0,-1){1.7}}

\put(4,15){\line(3,1){3}} \put(10,15){\line(-3,1){3}}
\put(6,14){\line(1,2){1}} \put(8,14){\line(-1,2){1}}
\put(7,16){\line(-2,1){2}} \put(7,16){\line(2,1){2}}
\put(7,16){\line(0,1){2}}


\put(21,3){\line(0,1){9}} \put(24,3){\line(0,1){9}}
\put(27,3){\line(0,1){9}} \put(30,3){\line(0,1){9}}
\put(33,3){\line(0,1){9}} \put(36,3){\line(0,1){9}}

\put(18,3){\line(1,3){3}} \put(21,3){\line(1,3){3}}
\put(24,3){\line(1,3){3}} \put(27,3){\line(1,3){3}}
\put(30,3){\line(1,3){3}} \put(33,3){\line(1,3){3}}
\put(36,3){\line(1,3){3}}

\put(30,0){\line(-3,1){9}} \put(30,0){\line(-2,1){6}}
\put(30,0){\line(-1,1){3}} \put(30,0){\line(0,1){3}}
\put(30,0){\line(2,1){6}} \put(30,0){\line(1,1){3}}

\put(21,12){\line(2,1){6}} \put(24,12){\line(1,1){3}}
\put(27,12){\line(0,1){3}} \put(30,12){\line(-1,1){3}}
\put(33,12){\line(-2,1){6}} \put(36,12){\line(-3,1){9}}


\put(22,23.5){\line(1,0){21}}

\put(25,19){\line(0,1){9}} \put(28,19){\line(0,1){9}}
\put(31,19){\line(0,1){9}} \put(34,19){\line(0,1){9}}
\put(37,19){\line(0,1){9}} \put(40,19){\line(0,1){9}}

\put(22,19){\line(2,3){6}} \put(25,19){\line(2,3){6}}
\put(28,19){\line(2,3){6}} \put(31,19){\line(2,3){6}}
\put(34,19){\line(2,3){6}} \put(37,19){\line(2,3){6}}
\put(22,23.5){\line(2,3){3}} \put(40,19){\line(2,3){3}}

\put(34,16){\line(-3,1){9}} \put(34,16){\line(-2,1){6}}
\put(34,16){\line(-1,1){3}} \put(34,16){\line(0,1){3}}
\put(34,16){\line(2,1){6}} \put(34,16){\line(1,1){3}}

\put(25,28){\line(2,1){6}} \put(28,28){\line(1,1){3}}
\put(31,28){\line(0,1){3}} \put(34,28){\line(-1,1){3}}
\put(37,28){\line(-2,1){6}} \put(40,28){\line(-3,1){9}}



\put(5.8,28.2){\mbox{$u_{4,n}$}} \put(7.8,21.8){\mbox{$v_{4,n}$}}
\put(7.3,16.8){\mbox{$w_{4,n}$}}
\put(8.2,4.1){\mbox{$u_{4,n}^{\hspace{.3mm}\prime}$}}

\put(1,3.8){\mbox{$p_{0}^{\hspace{.3mm}\prime}$}}
\put(3.5,2){\mbox{$p_{1}^{\hspace{.3mm}\prime}$}}
\put(9.2,2){\mbox{$p_{2}^{\hspace{.3mm}\prime}$}}
\put(11.6,3.8){\mbox{$p_{3}^{\hspace{.3mm}\prime}$}}
\put(10,7.5){\mbox{$p_{4}^{\hspace{.3mm}\prime}$}}
\put(7.4,8.5){\mbox{$p_{5}^{\hspace{.3mm}\prime}$}}
\put(2.8,7.5){\mbox{$p_{6}^{\hspace{.3mm}\prime}$}}

\put(1,27.8){\mbox{$p_{2}$}} \put(4.5,26){\mbox{$p_{3}$}}
\put(8.7,26){\mbox{$p_{4}$}} \put(11.7,27.8){\mbox{$p_{5}$}}
\put(10.2,29.3){\mbox{$p_{6}$}} \put(7.4,30.5){\mbox{$p_{0}$}}
\put(3,29.4){\mbox{$p_{1}$}}

\put(2.15,15){\mbox{$q_{n,1}$}} \put(4.3,13.5){\mbox{$q_{n, 2}$}}
\put(8.2,13.2){\mbox{$q_{n,3}$}} \put(10.1,15){\mbox{$q_{n,4}$}}
\put(9.2,17.7){\mbox{$q_{n,5}$}} \put(6.2,18.5){\mbox{$q_{n,6}$}}
\put(3.3,17.5){\mbox{$q_{n,0}$}}


\put(16.5,3.7){\mbox{$p_{0}^{\hspace{.3mm}\prime}$}}
\put(19.5,3.7){\mbox{$p_{1}^{\hspace{.3mm}\prime}$}}
\put(22.,3.7){\mbox{$p_{2}^{\hspace{.3mm}\prime}$}}
\put(25.5,3.7){\mbox{$p_{3}^{\hspace{.3mm}\prime}$}}
\put(28.5,3.7){\mbox{$p_{4}^{\hspace{.3mm}\prime}$}}
\put(31.5,3.7){\mbox{$p_{5}^{\hspace{.3mm}\prime}$}}
\put(34.5,3.7){\mbox{$p_{6}^{\hspace{.3mm}\prime}$}}
\put(37.5,3.7){\mbox{$p_{0}^{\hspace{.3mm}\prime}$}}

\put(18.5,11){\mbox{$p_{2}$}} \put(21.5,11){\mbox{$p_{3}$}}
\put(24.5,11){\mbox{$p_{4}$}} \put(27.5,11){\mbox{$p_{5}$}}
\put(30.5,11){\mbox{$p_{6}$}} \put(33.5,11){\mbox{$p_{0}$}}
\put(36.5,11){\mbox{$p_{1}$}} \put(39.5,11){\mbox{$p_{2}$}}

\put(28,15.2){\mbox{$u_{4,n}$}} \put(31.5,-0.5){\mbox{$u_{4,
n}^{\hspace{.3mm}\prime}$}}


\put(20.5,19.7){\mbox{$p_{0}^{\hspace{.3mm}\prime}$}}
\put(23.5,19.7){\mbox{$p_{1}^{\hspace{.3mm}\prime}$}}
\put(26.5,19.7){\mbox{$p_{2}^{\hspace{.3mm}\prime}$}}
\put(29.5,19.7){\mbox{$p_{3}^{\hspace{.3mm}\prime}$}}
\put(32.5,19.7){\mbox{$p_{4}^{\hspace{.3mm}\prime}$}}
\put(35.5,19.7){\mbox{$p_{5}^{\hspace{.3mm}\prime}$}}
\put(38.5,19.7){\mbox{$p_{6}^{\hspace{.3mm}\prime}$}}
\put(41.5,19.7){\mbox{$p_{0}^{\hspace{.3mm}\prime}$}}

\put(22.2,22.6){\mbox{$q_{n,1}$}}
\put(25.2,22.6){\mbox{$q_{n,2}$}}
\put(28.2,22.6){\mbox{$q_{n,3}$}}
\put(31.2,22.6){\mbox{$q_{n,4}$}}
\put(34.2,22.6){\mbox{$q_{n,5}$}}
\put(37.2,22.6){\mbox{$q_{n,6}$}}
\put(40.2,22.6){\mbox{$q_{n,0}$}}
\put(43.2,22.6){\mbox{$q_{n,1}$}}

\put(22.5,27){\mbox{$p_{2}$}} \put(25.5,27){\mbox{$p_{3}$}}
\put(28.5,27){\mbox{$p_{4}$}} \put(31.5,27){\mbox{$p_{5}$}}
\put(34.5,27){\mbox{$p_{6}$}} \put(37.5,27){\mbox{$p_{0}$}}
\put(40.5,27){\mbox{$p_{1}$}} \put(43.5,27){\mbox{$p_{2}$}}

\put(27,31){\mbox{$u_{4,n}$}} \put(35.5,15.5){\mbox{$u_{4,
n}^{\hspace{.3mm}\prime}$}}

\put(6,0.5){\mbox{${\mathcal B}_{4,n}$}} \put(22,0){\mbox{$\partial
{\mathcal B}_{4,n}$}}

\put(20,16.5){\mbox{$\partial {\mathcal B}^{\hspace{.3mm}\prime}
$}}

\put(14,0){\mbox{\bf Figure\,2b}}

\end{picture}

\begin{eg} \label{EB4n}
{\rm For each integer $n \geq 0$, consider the 3-dimensional simplicial complex ${\mathcal B}_{4,n} = {\mathcal
B}^{\hspace{.3mm}\prime} \cup {\mathcal B}^{\hspace{.3mm}\prime\prime}$ on the vertex set $\{p_0, p_1, \dots,
p_6\} \cup\{p_0^{\hspace{.3mm}\prime}, \dots, p_6^{\hspace{.3mm}\prime}\} \cup \{q_{n,0}, q_{n,1}, \dots, q_{n,
6}\} \cup $ $\{u_{4,n}, v_{4,n}, w_{4,n}, u_{4, n}^{\hspace{.2mm}\prime}\}$.

\begin{eqnarray*}
{\mathcal B}^{\hspace{.3mm}\prime} & = & \{w_{4,n}u_{4,n}^{\hspace{.3mm}\prime}p_i^{\hspace{.3mm}\prime} p_{i
+1}^{\hspace{.3mm}\prime}, w_{4,n} p_i^{\hspace{.3mm}\prime} p_{i+1}^{\hspace{.3mm}\prime}q_{n,i+2}, w_{4,n}
p_{i-1}^{\hspace{.3mm}\prime}q_{n,i}q_{n, i+1}, v_{4,n}w_{4,n}q_{n,i}q_{n,i+1}, \\ && ~~  v_{4,n}q_{n,i}p_{i+1}
p_{i+2}, v_{4,n}q_{n,i}q_{n,i+1}p_{i+2},
u_{4,n}v_{4,n}p_ip_{i+1}  : \, i \in \ZZ_7\},  \\
{\mathcal B}^{\hspace{.3mm}\prime\prime} &= & \{p_{i-1}^{\hspace{.3mm}\prime}q_{n,i}q_{n,i+1}p_{i+2},
p_{i-2}^{\hspace{.3mm}\prime}p_{i-1}^{\hspace{.3mm}\prime}q_{n, i}p_{i+1},
p_{i-1}^{\hspace{.3mm}\prime}q_{n,i}p_{i+1}p_{i+2}, \, : \, i \in \ZZ_7\}.
\end{eqnarray*}
Consider the equivalence relation `$\sim$' on the vertex set $V({\mathcal B}_{4,n})$ generated by
$u_{4,n}^{\hspace{.3mm}\prime} \sim u_{4,n}$, $p_i^{\hspace{.3mm}\prime} \sim p_i$ for $0\leq i \leq 6$. Let
\begin{eqnarray} \label{ET4n}
T_{4, n} = {\mathcal B}_{4,n}/\hspace{-1.5mm}\sim.
\end{eqnarray}
We identify the vertices $[p_i]$, $[u_{4,n}]$, $[v_{4,n}]$ and $[w_{4,n}]$ in $T_{4,n}$ with $i$, $u_{4,n}$,
$v_{4,n}$ and $w_{4,n}$ respectively. So, $V(T_{4,n}) = \ZZ_7\cup \{q_{n,0}, \dots, q_{n,6}\} \cup\{u_{4,n},
v_{4,n}, w_{4,n}\}$ for $n\geq 0$. We assume that $V(T_{4,n}) \cap V(T_{4,m}) = \ZZ_7 = V(T_{4,n}) \cap
V(T_{j,\hspace{.1mm}l})$ for all $\ell, m\neq n \geq 0$, $1\leq j\leq 3$. }\qed
\end{eg}

\begin{lemma} \label{LT4n}
For $n \geq 0$, the simplicial complexes ${\mathcal B}_{4,n}$ and $T_{4, n}$ defined in Example $\ref{EB4n}$
satisfy the following.
\begin{enumerate}[{\rm (i)}]
\item ${\mathcal B}_{4,n}$ is a triangulated $3$-ball with boundary $\partial {\mathcal B}_{4,n}  = {\mathcal
C}_1 \cup {\mathcal C}_2 \cup {\mathcal C}_3$, where ${\mathcal C}_1 = \{u_{4, n}p_ip_{i + 1} : i \in \ZZ_7\}$,
${\mathcal C}_2 = \{u_{4, n}^{\hspace{.3mm}\prime} p_i^{\hspace{.3mm}\prime} p_{i + 1}^{\hspace{.3mm}\prime} : i
\in \ZZ_7\}$, ${\mathcal C}_3 = \{p_{i - 2}^{\hspace{.3mm}\prime}p_{i - 1}^{\hspace{.3mm}\prime}p_{i + 1}, p_{i -
1}^{\hspace{.3mm}\prime} p_{i + 1}p_{i + 2} : i \in \ZZ_7\}$.

\item $T_{4, n}$ is a triangulated solid torus with boundary $\partial T_{4, n} = \mathcal{T}$.

\item If $\alpha_1$, $\alpha_2$ are as in Remark $\ref{R1}$ and $\alpha_4 = [c_4] \in \pi_1(\mathcal{T}, 0)$,
where $c_4$ is the loop $01234560$ in $\mathcal{T}$, then $\alpha_4 = 3\alpha_1 +\alpha_2$ and $\alpha_4 =0$ in
$\pi_1(T_{4,n}, 0)$.
\end{enumerate}
\end{lemma}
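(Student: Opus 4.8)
The plan is to read $\mathcal{B}_{4,n}$ as a triangulated solid cylinder whose two end-discs are $\mathcal{C}_1,\mathcal{C}_2$ and whose lateral boundary is the (twisted) annulus $\mathcal{C}_3$, and then to obtain $T_{4,n}$ by gluing the two ends together.

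I would prove (i) in two stages. \emph{Stage 1 (the boundary).} Treating $\mathcal{B}_{4,n}$ as a pure $3$-dimensional complex, I would run through the seven families of tetrahedra of $\mathcal{B}'$ together with the three families of $\mathcal{B}''$ and count, for each triangle, how many facets contain it. The outcome should be that every triangle lies in at most two tetrahedra and that those lying in exactly one are precisely the triangles listed in $\mathcal{C}_1$, $\mathcal{C}_2$ and $\mathcal{C}_3$; this simultaneously shows $\mathcal{B}_{4,n}$ is a weak pseudomanifold and gives $\partial\mathcal{B}_{4,n}=\mathcal{C}_1\cup\mathcal{C}_2\cup\mathcal{C}_3$. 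One then notes that $\mathcal{C}_1=u_{4,n}\ast(p_0p_1\cdots p_6)$ and $\mathcal{C}_2=u_{4,n}'\ast(p_0'p_1'\cdots p_6')$ are $2$-discs and that $\mathcal{C}_3$ is an annulus joining their boundary $7$-gons, so $\partial\mathcal{B}_{4,n}$ is a $2$-sphere. \emph{Stage 2 (ball-ness).} Using the link criterion recalled in Section~\ref{preli} (for $1\le d\le 4$ a complex is a combinatorial manifold iff all vertex links are triangulated spheres or balls), I would check that the interior vertices $v_{4,n},w_{4,n},q_{n,i}$ have $2$-sphere links and the boundary vertices $u_{4,n},u_{4,n}',p_i,p_i'$ have $2$-disc links, so that $\mathcal{B}_{4,n}$ is a combinatorial $3$-manifold with sphere boundary. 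To upgrade this to a ball I would either exhibit an explicit collapse of $\mathcal{B}_{4,n}$ to a point, or decompose $\mathcal{B}_{4,n}=\mathcal{B}'\cup\mathcal{B}''$ as a union of two $3$-balls meeting in a $2$-ball. This is where I expect the real work to lie: the link computations are voluminous and producing a clean collapse (or verifying the two-ball decomposition) requires care.

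For (ii), granting (i), I would observe that the equivalence $\sim$ identifies only $u_{4,n}'\mapsto u_{4,n}$ and $p_i'\mapsto p_i$, that it carries the boundary disc $\mathcal{C}_2$ simplicially and isomorphically onto $\mathcal{C}_1$, and that no two vertices of a common simplex are merged; hence $T_{4,n}=\mathcal{B}_{4,n}/\!\sim$ is a genuine simplicial complex, obtained from the $3$-ball $\mathcal{B}_{4,n}$ by gluing its two disjoint boundary discs $\mathcal{C}_2$ and $\mathcal{C}_1$ by a homeomorphism. Thus $|T_{4,n}|$ is a solid torus $D^2\times S^1$; re-checking links (the link of each glued vertex is the union of its two links in $\mathcal{B}_{4,n}$ along their common arc or circle, giving a disc for the boundary vertices $p_i$ and a sphere for the interior apex $u_{4,n}$) confirms $T_{4,n}$ is a combinatorial solid torus. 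Finally $\partial T_{4,n}=\mathcal{C}_3/\!\sim$, and substituting $p_j',p_j\mapsto j$ into the two families $\{p_{i-2}'p_{i-1}'p_{i+1}\}$ and $\{p_{i-1}'p_{i+1}p_{i+2}\}$ returns exactly the fourteen triangles of $\mathcal{T}$ in \eqref{ET}, so $\partial T_{4,n}=\mathcal{T}$.

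For (iii) I would separate the two assertions. The identity $\alpha_4=3\alpha_1+\alpha_2$ is a computation in $\pi_1(\mathcal{T},0)\cong H_1(\mathcal{T})\cong\ZZ\oplus\ZZ$: fixing the spanning tree of edges $0i$ and writing loops through the fundamental cycles $\gamma_{ij}=[0\,i\,j\,0]$, one gets $c_4=\gamma_{12}+\gamma_{23}+\gamma_{34}+\gamma_{45}+\gamma_{56}$, $\alpha_1=\gamma_{16}$ and $\alpha_2=\gamma_{25}$. The fourteen triangle relations of $\mathcal{T}$ then reduce this: the six faces through $0$ give $\gamma_{13}=\gamma_{45}=\gamma_{26}=\gamma_{23}=\gamma_{46}=\gamma_{15}=0$, while a face $abc$ ($a<b<c$) gives $\gamma_{ab}+\gamma_{bc}=\gamma_{ac}$; these yield $\gamma_{12}=\gamma_{56}=\alpha_1$ and $\gamma_{34}=\alpha_1+\alpha_2$ (recovering $\alpha_3=\gamma_{34}=\alpha_1+\alpha_2$ as a check against Remark~\ref{R1}), whence $c_4=\alpha_1+0+(\alpha_1+\alpha_2)+0+\alpha_1=3\alpha_1+\alpha_2$. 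For the vanishing $\alpha_4=0$ in $\pi_1(T_{4,n},0)$ I would use the geometry of (ii): the loop $c_4=0123456$ is exactly the boundary $7$-gon $p_0p_1\cdots p_6$ of the disc $\mathcal{C}_1$, and under $\sim$ this disc (identified with $\mathcal{C}_2$) becomes the meridian disc of the solid torus $T_{4,n}$; since $c_4$ therefore bounds a disc in $T_{4,n}$, it is null-homotopic there.
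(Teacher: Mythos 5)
The decisive step of part (i) --- that ${\mathcal B}_{4,n}$ is actually a $3$-ball --- is not established in your proposal, and this is a genuine gap. Your Stage 2 rightly recognizes that vertex links plus a $2$-sphere boundary only yield a compact $3$-manifold with $S^2$ boundary, so an upgrade is needed; but of your two suggested upgrades, the explicit collapse is never produced, and the fallback ``decompose ${\mathcal B}_{4,n}={\mathcal B}'\cup{\mathcal B}''$ as a union of two $3$-balls meeting in a $2$-ball'' is false. The subcomplex ${\mathcal B}''$ consists of the $21$ tetrahedra $p_{i-1}'q_{n,i}q_{n,i+1}p_{i+2}$, $p_{i-2}'p_{i-1}'q_{n,i}p_{i+1}$, $p_{i-1}'q_{n,i}p_{i+1}p_{i+2}$ for $i\in\ZZ_7$; since the index runs over all of $\ZZ_7$, this layer wraps entirely around the cylinder, filling the region between the lateral annulus ${\mathcal C}_3$ and the lateral part of $\partial{\mathcal B}'$. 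Hence $|{\mathcal B}''|$ is a thickened annulus (a solid torus), not a ball, and ${\mathcal B}'\cap{\mathcal B}''$ is an annulus, not a $2$-disc. The paper avoids this by a shelling-type argument: it writes ${\mathcal B}_{4,n}={\mathcal B}_1\cup\cdots\cup{\mathcal B}_{17}$ (with ${\mathcal B}'={\mathcal B}_1\cup{\mathcal B}_2\cup{\mathcal B}_3$ a ball), where each ${\mathcal B}_i$ is a small triangulated $3$-ball and ${\mathcal B}_i\cap({\mathcal B}_1\cup\cdots\cup{\mathcal B}_{i-1})$ is a triangulated $2$-disc for $2\leq i\leq 17$, so ball-ness is preserved at each attachment. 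Some such ordered decomposition (or an actually exhibited collapse) must be supplied; without it (i) is unproved, and your (ii) leans on (i).

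Two smaller points. In (ii), your condition ``no two vertices of a common simplex are merged'' is weaker than what is required for the quotient complex to triangulate the topological gluing: one needs, as the paper checks, that $p_i$ and $p_i'$ (and $u_{4,n}$ and $u_{4,n}'$) have no common neighbour in ${\mathcal B}_{4,n}$, since otherwise two distinct simplices could acquire the same vertex set after identification and $|T_{4,n}|$ would differ from the intended gluing; your proposed re-check of links would detect this, so here it is an imprecision rather than an error. Part (iii) is correct in both halves: your computation of $\alpha_4=3\alpha_1+\alpha_2$ via a spanning tree and the fourteen triangle relations in $H_1({\mathcal T})\cong\ZZ\oplus\ZZ$ is a valid (and arguably more systematic) alternative to the paper's chain of elementary homotopies $0160340160\simeq\cdots\simeq 01234560$, and your null-homotopy argument --- the image of ${\mathcal C}_1$ in $T_{4,n}$ is a $2$-disc with boundary $c_4$ --- is exactly the paper's.
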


\begin{proof}
Consider the seventeen triangulated $3$-balls 

${\mathcal B}_1 = \{w_{4,n}u_{4,n}^{\hspace{.3mm}\prime}p_i^{\hspace{.3mm}\prime}p_{i +1}^{\hspace{.3mm}\prime}$, ~$w_{4,
n}p_i^{\hspace{.3mm}\prime}p_{i +1}^{\hspace{.3mm}\prime}q_{n, i+2}$, 
~$w_{4,n}p_{i-1}^{\hspace{.3mm}\prime}q_{n,i}q_{n, i+1} \hspace{.2mm} : \hspace{.2mm} i \in \ZZ_7\}$,\newline

${\mathcal B}_2 = \{v_{4, n}q_{n,i}p_{i+1}p_{i+2}, ~v_{4,n}q_{n,i}q_{n,i+1}p_{i+2}, ~v_{4, n}w_{4,n}q_{n,i}q_{n,i+1}
\hspace{.2mm} :  \hspace{.2mm} i \in \ZZ_7\}$,\newline

${\mathcal B}_3 = \{u_{4,n}v_{4,n}p_ip_{i+1} \hspace{.2mm} :
\hspace{.2mm} i \in \ZZ_7\}$, $~{\mathcal B}_4 = \{p_{0}^{\hspace{.3mm}\prime} q_{n,1}q_{n,2}p_{3}\}, \dots,
{\mathcal B}_{10} = \{p_{6}^{\hspace{.3mm}\prime} q_{n,0}q_{n,1}p_{2}\}$,\newline

${\mathcal B}_{11} = \{p_{0}^{\hspace{.3mm}\prime} p_{1}^{\hspace{.3mm}\prime} q_{n, 2}p_{3}, ~p_{1}^{\hspace{.3mm}\prime}
q_{n,2}p_{3}p_{4}\}, \dots, {\mathcal B}_{17} = \{p_{6}^{\hspace{.3mm}\prime}p_{0}^{\hspace{.3mm}\prime} q_{n,
1}p_{2}, ~p_{0}^{\hspace{.3mm}\prime}q_{n,1}p_{2}p_{3}\}$.

Clearly, ${\mathcal B}_{4,n} = {\mathcal B}_1 \cup \cdots \cup {\mathcal B}_{17}$. Observe that
${\mathcal B}_i \cap ({\mathcal B}_1 \cup \cdots \cup {\mathcal B}_{i-1})$ is a triangulated 2-disc
for $2 \leq i \leq 17$. This implies that ${\mathcal B}_{4,n} = {\mathcal B}_1 \cup \cdots \cup {\mathcal B}_{17}$
is a triangulated 3-ball. This proves first part of (i). (Observe that
${\mathcal B}^{\hspace{.3mm}\prime} = {\mathcal B}_1 \cup {\mathcal B}_2 \cup {\mathcal B}_3$ is also a 3-ball.)
Second part of (i) follows from the definition of ${\mathcal B}_{4,n}$.

There is no vertex in ${\mathcal B}_{4,n}$ which is a common neighbour of $p_{i}^{\hspace{.3mm}\prime}$ and
$p_{i}$ for $i\in \ZZ_7$ and there is no vertex in ${\mathcal B}_{4,n}$ which is a common neighbour of $u_{4,
n}^{\hspace{.3mm}\prime}$ and $u_{4,n}$. This implies that the quotient complex $T_{4,n}$ is a triangulated
3-manifold with boundary. Clearly, the boundary $\partial T_{4, n}$ is the quotient ${\mathcal
C}_3/\hspace{-1.5mm}\sim \, = T$. Since the space $|T_{4,n}|$ can be obtained from $|{\mathcal B}_{4,n}|$ by
identifying the 2-disc $|{\mathcal C}_1|$ with the 2-disc $|{\mathcal C}_2|$ (via the simplicial isomorphism from
${\mathcal C}_1$ to ${\mathcal C}_2$ given by $u_{4,n} \mapsto u_{4,n}^{\hspace{.3mm}\prime}$, $p_i \mapsto
p_i^{\hspace{.3mm}\prime} $ for $0\leq i \leq 6$), $|T_{4,n}|$ is a solid torus. This proves (ii).

It is clear that, $0160340160 \simeq 01603405160 \simeq 0160340560 \simeq 016034560 \simeq 0160234560 \simeq 016234560 \simeq
01234560$ (see Fig. 2a). Thus, $\alpha_4= [c_4] = [0160] + [0340] + [0160] = \alpha_1 + \alpha_3 + \alpha_1 =
3\alpha_1 + \alpha_2$. Let ${\mathcal C}$ be the 2-disc in $T_{4,n}$ corresponding to the 2-disc ${\mathcal C}_1$
in ${\mathcal B}_{4,n}$. Then, $c_4$ is the boundary of ${\mathcal C}$ and hence $\alpha_4 = [c_4] =0$ in
$\pi_1(T_{4,n}, 0)$. This completes the proof of (iii).
\end{proof}

\begin{eg} \label{ET56n}
{\rm Let $Q_n =
\{q_{n,0}, \dots, q_{n,6}\}$, $U_4 = \{u_{4,n} : n\geq 0\}$, $V_4 = \{v_{4,n} : n\geq 0\}$, $W_4 = \{w_{4,n} :
n\geq 0\}$ be as in Example \ref{EB4n}. Let $Q = \cup_{n\geq 0}Q_n$. Take new (disjoint) sets of vertices
\begin{align*}
R & = \cup_{n\geq 0}R_n = \cup_{n\geq 0}\{r_{n,0}, \dots, r_{n,6}\}, ~S = \cup_{n\geq 0}S_n = \cup_{n\geq 0}\{s_{n,0},
\dots, s_{n,6}\}, \\
U_j & = \{u_{j,n} \, : n\geq 0\}, ~V_j = \{v_{j,n} \, : n\geq 0\}, ~W_j = \{w_{j,n} \, :
n\geq 0\} ~\mbox{for} ~5 \leq j\leq 6.
\end{align*}

Let $f$ and $g$ be as in \eqref{maps_fg}. Consider the following extensions of
$f$ and $g$ (also denoted by $f$ and $g$ respectively) $ {f},  {g} \, \colon \ZZ_7 \sqcup (Q\sqcup R\sqcup S)
\sqcup (\sqcup_{i=1}^6(U_{i} \sqcup V_{i} \sqcup W_{i})) \to \ZZ_7 \sqcup (Q\sqcup R\sqcup S) \sqcup
(\sqcup_{i=1}^6(U_{i} \sqcup  V_{i} \sqcup W_{i}))$  as\,:
\begin{eqnarray} \label{maps_fg456}
&&  {f}(u_{j,n}) = u_{j,n+1},  {f}(v_{j,n}) =
v_{j,n+1},  {f}(w_{j,n}) = w_{j,n+1}, \, \mbox{ for }
4\leq j\leq 6,  n\geq 0,  \nonumber \\
&&  {f}(q_{n,i}) = q_{n+1,i},  {f}(r_{n,i}) =
r_{n+1,i},  {f}(s_{n,i}) = s_{n+1,i}, \, \mbox{ for }
0\leq i\leq 6,  n\geq 0 \, \mbox{ and } \nonumber \\
&&  {g}(u_{4,n}) = u_{5,2n}, \,  {g}(u_{5,n}) =
u_{6,2n}, \,  {g}(u_{6,n}) = u_{4,2n}, \,  {g}(v_{4,n})
= v_{5,2n}, \,  {g}(v_{5,n}) = v_{6,2n},  \nonumber \\
&&  {g}(v_{6,n}) = v_{4,2n}, \,  {g}(w_{4,n}) =
w_{5,2n}, \,  {g}(w_{5,n}) = w_{6,2n}, \,
 {g}(w_{6,n}) = w_{4,2n}, \,
\mbox{ for } \, n \geq 0, \nonumber \\
&&  {g}(q_{n,i}) = r_{2n,i}, \,  {g}(r_{n,i}) =
s_{2n,i}, \,  {g}(s_{n,i}) = q_{2n,i}, \, 0\leq i\leq 6,  n\geq 0.
\end{eqnarray}
Then, $ {g} \circ  {f} = { {f}}^2 \circ  {g}$. Let
\begin{eqnarray} \label{T56n}
T_{5, 0} :=  {g}(T_{4,0}), &&
T_{6, 0} := { {g}}^{2}(T_{4,0}), \nonumber \\
T_{j,n} := { {f}}^{\hspace{.2mm}n}(T_{j,0}) && \mbox{for } \, n
\geq 1 \, \mbox{ and } \, j=5, 6.
\end{eqnarray}
For $4\leq j\leq 6$ and $n\geq 0$, the number of vertices in $T_{j, n}$ is $7+7+3=17$. } \qed
\end{eg}

\begin{lemma} \label{LT56n}
For $n \geq 0$, $T_{5, n}$, $T_{6, n}$ defined in Example $\ref{ET56n}$ satisfy the following.
\begin{enumerate}[{\rm (i)}]
\item $T_{i, m}\cong T_{j, n}$ for $4\leq i, j\leq 6$ and $m, n\geq 0$.

\item $T_{j, n}$ is a triangulated solid torus with boundary $\partial T_{j, n} = \mathcal{T}$ for $j = 5, 6$,
$n\geq 0$.

\item Let $\alpha_1$, $\alpha_2$ be as in Remark $\ref{R1}$ and $\alpha_5 = [c_5], \alpha_6 = [c_6] \in
\pi_1(\mathcal{T}, 0)$, where $c_5=05316420$, $c_6=03625140$ are loops in $\mathcal{T}$. Then, $\alpha_5 =
\alpha_1 - 2\alpha_2$, $\alpha_6 = 2\alpha_1 + 3\alpha_2$ and $\alpha_j =0$ in $\pi_1(T_{j,n}, 0)$ for $j=5,6$,
$n\geq 0$.
\end{enumerate}
\end{lemma}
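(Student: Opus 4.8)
The plan is to derive all three parts from the corresponding facts about $T_{4,0}$ (Lemma \ref{LT4n}), transporting them along the simplicial isomorphisms $f$ and $g$ of Example \ref{ET56n}. Since $f$ and $g$ are bijections of the ambient vertex set, they are simplicial isomorphisms onto their images.

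For part (i), I would first observe that $T_{4,m}\cong T_{4,n}$ for all $m,n$ via the relabeling that fixes $\ZZ_7$ pointwise and sends $q_{m,i}\mapsto q_{n,i}$, $u_{4,m}\mapsto u_{4,n}$, $v_{4,m}\mapsto v_{4,n}$, $w_{4,m}\mapsto w_{4,n}$: the complexes ${\mathcal B}_{4,m}$ and ${\mathcal B}_{4,n}$ of Example \ref{EB4n} are identical after this relabeling and carry the same identification $\sim$. By definition $T_{5,0}=g(T_{4,0})$, $T_{6,0}=g^2(T_{4,0})=g(T_{5,0})$, and $T_{j,n}=f^n(T_{j,0})$ for $j=5,6$. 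Chaining these isomorphisms shows every $T_{i,m}$ with $4\le i\le 6$, $m\ge 0$ is isomorphic to $T_{4,0}$, which gives (i). Part (ii) then follows at once on the topological level, since $T_{4,0}$ triangulates a solid torus by Lemma \ref{LT4n}(ii). For the boundary identification I would note that both $f|_{\ZZ_7}\colon i\mapsto i+1$ (an automorphism of $\mathcal{T}$ by Example \ref{ETjn}) and $g|_{\ZZ_7}\colon i\mapsto 2i$ preserve $\mathcal{T}$; the latter I would verify directly by checking that each of the fourteen triangles of $\mathcal{T}$ is sent to a triangle of $\mathcal{T}$. Since a simplicial isomorphism carries boundary to boundary and $\partial T_{4,0}=\mathcal{T}$, we get $\partial T_{5,0}=g(\mathcal{T})=\mathcal{T}$, $\partial T_{6,0}=\mathcal{T}$, and $\partial T_{j,n}=f^n(\mathcal{T})=\mathcal{T}$.

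For part (iii) the key observation is that $c_5=\overline{g(c_4)}$ and $c_6=\overline{g^2(c_4)}$ are the reverses of the images of $c_4=01234560$ under $i\mapsto 2i$: indeed $g(c_4)=02461350$, whose reverse is $05316420=c_5$, and $g^2(c_4)=04152630$, whose reverse is $03625140=c_6$. Hence $\alpha_5=-g_*(\alpha_4)$ and $\alpha_6=-g_*^2(\alpha_4)$ in $\pi_1(\mathcal{T},0)$. I would compute $g_*$ on generators: $g_*(\alpha_1)=[g(0160)]=[0250]=\alpha_2$ and $g_*(\alpha_2)=[g(0250)]=[0430]=-\alpha_3=-(\alpha_1+\alpha_2)$ (using Remark \ref{R1}). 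Together with $\alpha_4=3\alpha_1+\alpha_2$ from Lemma \ref{LT4n}(iii) this gives $g_*(\alpha_4)=-\alpha_1+2\alpha_2$ and $g_*^2(\alpha_4)=-2\alpha_1-3\alpha_2$, so $\alpha_5=\alpha_1-2\alpha_2$ and $\alpha_6=2\alpha_1+3\alpha_2$, as claimed. Since $g$ fixes the vertex $0$ and $\alpha_4=0$ in $\pi_1(T_{4,0},0)$, the isomorphisms $g,g^2\colon T_{4,0}\to T_{5,0},T_{6,0}$ give $\alpha_5=-g_*(\alpha_4)=0$ in $\pi_1(T_{5,0},0)$ and $\alpha_6=0$ in $\pi_1(T_{6,0},0)$, settling the case $n=0$.

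The step I expect to be the main obstacle is extending $\alpha_j=0$ to all $T_{j,n}$ with $n\ge 1$, because $f(0)\ne 0$ and so $f$ does not preserve the base point. I would resolve this by passing to $H_1$, where all the relevant groups are abelian and base points are irrelevant. The crucial point is that $f_{\ast}=\mathrm{id}$ on $H_1(\mathcal{T})\cong\ZZ^2$: since $f$ has order $7$, $f_\ast$ has order dividing $7$ in $GL(2,\ZZ)$, and $GL(2,\ZZ)$ contains no element of order $7$ (a primitive $7$th root of unity has degree $6$ over $\QQ$), forcing $f_\ast=\mathrm{id}$. An induction then finishes the argument: if $[c_5]=0$ in $H_1(T_{5,n})$, then applying the isomorphism $f\colon T_{5,n}\to T_{5,n+1}$ yields $[f(c_5)]=0$ in $H_1(T_{5,n+1})$; but $[f(c_5)]=f_\ast[c_5]=[c_5]$ in $H_1(\mathcal{T})$, and since $\mathcal{T}\subset T_{5,n+1}$ this gives $[c_5]=0$ in $H_1(T_{5,n+1})$. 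The identical induction, using $f\colon T_{6,n}\to T_{6,n+1}$, handles $\alpha_6$, completing part (iii).
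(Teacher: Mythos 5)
Your proof is correct, and for parts (i), (ii) and the computation of $\alpha_5$, $\alpha_6$ it matches the paper's argument: the paper likewise chains the isomorphisms $T_{i,m}\cong T_{4,0}$, uses $f(\mathcal{T})=\mathcal{T}=g(\mathcal{T})$ for the boundary, computes $g_{\ast}(\alpha_1)=\alpha_2$ and $g_{\ast}(\alpha_2)=-\alpha_1-\alpha_2$, and obtains $\alpha_5=-g_{\ast}(\alpha_4)$ from $g(c_4)=\overline{c}_5$ (it then writes $\alpha_6=g_{\ast}(\alpha_5)$ via $g(c_5)=c_6$, which is the same as your $\alpha_6=-g^2_{\ast}(\alpha_4)$). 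The one genuine divergence is the step you flagged as the main obstacle, extending $\alpha_j=0$ from $n=0$ to all $n\geq 1$. The paper's resolution is more elementary and makes the obstacle dissolve: the cycles $c_5$ and $c_6$ are \emph{setwise invariant} under $f$ — $c_5$ is the step-$5$ Hamiltonian cycle on $\ZZ_7$ (edges $\{i,i+5\}$) and $c_6$ the step-$3$ cycle, both preserved by the rotation $i\mapsto i+1$ — so $f^n(c_5)=c_5$ as a cycle, and since $c_5$ bounds the $2$-disc $g(\mathcal{C})$ in $T_{5,0}$, it bounds $f^n(g(\mathcal{C}))$ in $T_{5,n}$; nullhomotopy at the base point $0$ (which lies on $c_5$) is then immediate, with no base-point issue at all. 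Your alternative — $f_{\ast}=\mathrm{id}$ on $H_1(\mathcal{T})$ because $GL(2,\ZZ)$ has no element of order $7$, an induction in $H_1$, and the return to $\pi_1$ via the Hurewicz map, legitimate here since part (ii) gives $\pi_1(T_{j,n})\cong \ZZ$ abelian — is valid and more robust, in that it would work even if $c_5$, $c_6$ were not $f$-invariant loops; what it costs is extra machinery and the dependence on (ii) to convert the homological conclusion back into the $\pi_1$ statement the lemma asserts, which your ordering of the parts does supply.
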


\begin{proof}
From the definition in \eqref{T56n}, $T_{i,m} \cong T_{i,0}\cong T_{4,0} \cong T_{j,0} \cong T_{j,n}$ for $4\leq
i, j\leq 6$ and $m, n\geq 0$. This proves part (i).

By part (i), $T_{j, n}\cong T_{4,0}$. Therefore, by Lemma \ref{LT4n} (ii), $T_{j,n}$ triangulates the solid
torus. Since $f(\mathcal{T}) = \mathcal{T} = g(\mathcal{T})$ and $\partial T_{4,0} =\mathcal{T}$, it follows that
$\partial T_{j, n} = \mathcal{T}$. This proves part (ii).

Observe that $g(c_4) = 02461350 = \overline{c}_5$ and $g(c_5) = c_6$, where $\overline{c}_5$ is the 
loop $c_5$ with opposite orientation. Therefore, $g_{\ast}(\alpha_4) = -\alpha_5$
and $g_{\ast}(\alpha_5) = \alpha_6$, where $g_{\ast} \colon \pi_1(\mathcal{T}, 0) \to \pi_1(\mathcal{T}, 0)$ is
induced by the map $g$. Since $g_{\ast}(\alpha_1) = [g(0160)] = [0250] = \alpha_2$ and $g_{\ast}(\alpha_2) =
[g(0250)] = [0430] = -\alpha_1-\alpha_2$, it follows that $\alpha_5=-g_{\ast}(\alpha_4) = -g_{\ast}(3\alpha_1
+\alpha_2) = -3\alpha_2 + (\alpha_1 + \alpha_2) = \alpha_1- 2\alpha_2$. Therefore, $\alpha_6 = g_{\ast}(\alpha_5)
= g_{\ast}(\alpha_1 -2\alpha_2) = \alpha_2 - 2(-\alpha_1 -\alpha_2) = 2\alpha_1 +3\alpha_2$. This proves the
first part of (iii).

Since $g \colon T_{4,0} \to T_{5,0}$ is an isomorphism, $g_{\ast} \colon \pi_1(T_{4,0}, 0) \to \pi_1(T_{5,0}, 0)$
is an isomorphism. Therefore, $\alpha_5 = g_{\ast}(-\alpha_4) = g_{\ast}(0) =0$ in $\pi_1(T_{5,0},0)$. (In fact,
$c_5$ is the boundary of the $2$-disc $g({\mathcal C})$ in $T_{5,0}$, where ${\mathcal C}$ is the $2$-disc
defined in the proof of Lemma \ref{LT4n} (iii).) Now, $f^n \colon T_{5,0} \to T_{5,n}$ is an isomorphism and
$f^n(c_5) = c_5$. This implies that $\alpha_5 = [c_5] =0$ in $\pi_1(T_{5,n},0)$. Similarly, $\alpha_6 = [c_6] =0$
in $\pi_1(T_{6,n},0)$. This completes the proof of (iii).
\end{proof}

\begin{cor} \label{CT4n}
For $n\geq 0$, the solid tori $T_1, T_2, T_3$ defined in \eqref{ET123}, $T_{1,n}, T_{2,n}, T_{3,n}$ defined in
\eqref{ET123n}, $T_{4,n}$ defined in \eqref{ET4n} and $T_{5,n}$, $T_{6,n}$ in \eqref{T56n} satisfy the following.
\begin{enumerate}[{\rm (i)}]
\item $T_{1} \cup T_{4,n}$, $T_{1,m} \cup T_{4,n}$, $T_{2} \cup T_{5,n}$, $T_{2,m} \cup T_{5,n}$, $T_{3} \cup
T_{6,n}$, $T_{3,m} \cup T_{6,n}$ triangulate the $3$-sphere $S^{\hspace{.2mm}3}$ for $m, n\geq 0$.
\item $T_{2} \cup T_{4,n}$, $T_{2,m} \cup T_{4,n}$, $T_{3} \cup T_{5,n}$, $T_{3,m} \cup T_{5,n}$, $T_{1} \cup
T_{6,n}$, $T_{1,m} \cup T_{6,n}$ triangulate the lens space $L(3,1)$ for $m, n\geq 0$.
\item $T_{3} \cup T_{4,n}$, $T_{3,m} \cup T_{4,n}$, $T_{1} \cup T_{5,n}$, $T_{1,m} \cup T_{5,n}$, $T_{2} \cup
T_{6,n}$, $T_{2,m} \cup T_{6,n}$ triangulate the projective space $\mathbb{RP}^{\hspace{.2mm}3}$ for $m, n\geq
0$.
\item $T_{4,m} \cup T_{5,n}$, $T_{4,m} \cup T_{6,n}$, $T_{5,m} \cup T_{6,n}$ triangulate the lens space $L(7,2)$
for $m, n\geq 0$.
\item $T_{j,m} \cup T_{j,n}$ triangulates $S^{\hspace{.2mm}2} \times S^1$ for $4 \leq j \leq 6$ and $m \neq n
\geq 0$.
\end{enumerate}
\end{cor}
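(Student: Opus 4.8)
The plan is to realize each union in the statement as two triangulated solid tori glued along their common boundary torus $\mathcal{T}$ by the identity, i.e. as a genus-one Heegaard splitting, and then to read off the homeomorphism type from the two meridian classes in $H_1(\mathcal{T}) = \langle\alpha_1,\alpha_2\rangle \cong \ZZ\oplus\ZZ$. First I would record, from Remark \ref{R1} and Lemmas \ref{LT4n}(iii), \ref{LT56n}(iii), the meridian (the primitive class that bounds a disc, i.e. dies in $\pi_1$ of the solid torus) of each factor: $\alpha_1$ for $T_1$ and $T_{1,m}$, $\alpha_2$ for $T_2, T_{2,m}$, $\alpha_1+\alpha_2$ for $T_3, T_{3,m}$, $3\alpha_1+\alpha_2$ for $T_{4,n}$, $\alpha_1-2\alpha_2$ for $T_{5,n}$, and $2\alpha_1+3\alpha_2$ for $T_{6,n}$. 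The subdivisions $T_{j,m}$ are piecewise-linearly homeomorphic to $T_j$ and have the same boundary $\mathcal{T}$, so they carry the same meridian class.

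Next I would verify that in every listed case the intersection of the two complexes is exactly $\mathcal{T}$, so that the union really is the identity gluing. This is the vertex-set and face bookkeeping already used in Lemma \ref{LT123n}(iv),(v): by the disjointness arranged in Examples \ref{ETjn}, \ref{EB4n}, \ref{ET56n} the two vertex sets meet only in $\ZZ_7$, and every face of a factor lying outside $\mathcal{T}$ contains one of that factor's private vertices, so no such face can be common. Combined with $\partial T_{j,n}=\mathcal{T}$ (Lemmas \ref{LT123n}(iii), \ref{LT4n}(ii), \ref{LT56n}(ii)) and the fact that each $T_{j,n}$ is a solid torus, each union is a closed triangulated $3$-manifold presented as a genus-one Heegaard splitting with the two meridians sitting as explicit curves on $\mathcal{T}$.

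The homeomorphism type of such a splitting with meridians $\mu,\mu'$ is standard: completing $\mu$ to a basis $\{\mu,\lambda\}$ of $H_1(\mathcal{T})$ and writing $\mu'=a\mu+b\lambda$, the result is the lens space $L(|b|,a)$ (up to the homeomorphisms of lens spaces recalled earlier, matching the definition via $\varphi_{(p,q)}$), where $p=|b|=|\mu\wedge\mu'|$ is the order of $H_1$. I would run this determinant-and-change-of-basis computation through all pairs. For part (i) every $|\mu\wedge\mu'|=1$, giving $S^3$; for part (iii) every $|\mu\wedge\mu'|=2$, forcing $\mathbb{RP}^3=L(2,1)$, the unique lens space with $H_1=\ZZ_2$; for part (ii) every $|\mu\wedge\mu'|=3$ (e.g.\ $T_2\cup T_{4,n}$ has meridians $(0,1),(3,1)$, determinant $-3$), and since every lens space with $H_1=\ZZ_3$ is homeomorphic to $L(3,1)$ this already finishes. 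Part (v) is the degenerate case $\mu=\mu'$, so $\mu\wedge\mu'=0$ and the union is $S^{\hspace{.2mm}2}\times S^1$; here I would simply invoke Remark \ref{R2} (the double of a solid torus along the identity of its boundary), exactly as in Lemma \ref{LT123n}(vii), applied now to $T_{j,m},T_{j,n}$ for $4\leq j\leq 6$.

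The one place where the order of $H_1$ is \emph{not} decisive is part (iv): since $L(7,1)$ and $L(7,2)$ both have $H_1=\ZZ_7$ but are not homeomorphic, I must actually compute the off-diagonal coefficient $a$ and check it lands in the right class. For instance $T_{4,m}\cup T_{5,n}$ has meridians $(3,1),(1,-2)$ with determinant $-7$ and, expanding $(1,-2)=a(3,1)+b(1,0)$, coefficient $a\equiv-2\equiv 5 \pmod 7$; the other two pairs give $a\equiv 3$ and $a\equiv 2$. Each of these lies in the class $\{2,3,4,5\}$ of $L(7,2)$ rather than the class $\{1,6\}$ of $L(7,1)$, and I would note that this conclusion is insensitive to the residual $a\leftrightarrow a^{-1}$ and $a\leftrightarrow -a$ ambiguities of the lens-space classification (it suffices that $a\not\equiv 0,\pm1$). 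This $q$-level computation, rather than any topology, is the main point to get right; everything else reduces to the meridian data already established and the elementary determinants $\alpha_i\wedge\alpha_j$.
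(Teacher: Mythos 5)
Your proposal is correct, and in substance it follows the same road as the paper, with two differences of mechanism worth recording. The paper's proof opens with exactly your bookkeeping step (it notes $T_{j}\cap T_{4,n}=\mathcal{T}=T_{j,m}\cap T_{4,n}$ and then verifies that every vertex link in the union is a $2$-sphere, so each union is a closed triangulated $3$-manifold), and it then records precisely your meridian table ($\alpha_1$, $\alpha_2$, $\alpha_3=\alpha_1+\alpha_2$, $\alpha_4=3\alpha_1+\alpha_2$, $\alpha_5=\alpha_1-2\alpha_2$); but instead of quoting the classification of genus-one Heegaard splittings it applies Seifert--Van Kampen to present $\pi_1$ of each union as $\ZZ^2$ modulo the two meridian classes. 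These are the same computation in different clothing --- the paper's rewritten presentations, such as $\langle\{\alpha_1,\alpha_4\};\,\{7\alpha_1-2\alpha_4,\alpha_4\}\rangle$, are exactly your change of basis to meridian--longitude coordinates --- but your version is more explicit at the one genuinely delicate point: the paper's display only establishes $\pi_1\cong\ZZ_7$ and then asserts $L(7,-2)=L(7,2)$, leaving the $q$-coefficient implicit in the presentation, whereas you compute $q$ for all three pairs in (iv), check that it lands in the class $\{2,3,4,5\}$ of $L(7,2)$ rather than $\{1,6\}$ of $L(7,1)$, and note the insensitivity to the residual $q\mapsto\pm q^{\pm1}$ ambiguity; for parts (i)--(iii) your observation that the determinant alone decides the answer matches what the paper's $\pi_1$ computations actually use. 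The second difference is economy: the paper computes one representative union per homeomorphism class and transfers the rest via the simplicial isomorphisms $g$ and $g^2$ built in Examples \ref{ETjn} and \ref{ET56n} (e.g. $g^2\colon T_{2,m}\cup T_{5,n}\to T_{1,4m}\cup T_{4,4n}$), while you run the determinant-and-basis computation through every pair directly; both are valid, the symmetry transfer merely saves arithmetic. Part (v) is handled identically in both (the doubling argument of Remark \ref{R2}).
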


\begin{proof}
From the definition of $T_{4,n}$, it follows that $T_{j} \cap T_{4,n} = \mathcal{T} = T_{j,m} \cap T_{4,n}$ for
$1\leq j\leq 3$ and $m, n\geq 0$. For $1 \leq j\leq 3$, let $M_{jmn}= T_{j,m} \cup T_{4,n}$ and let $M_{jn}=
T_{j} \cup T_{4,n}$. If $u$ is in $V(T_{j,m}) \setminus V(\partial T_{j,m})$ (resp., in $V(T_{4,n}) \setminus
V(\partial T_{4,n})$ then, ${\rm lk}_{M_{jmn}}(u)$ is same as ${\rm lk}_{T_{j,m}}(u)$ (resp., ${\rm
lk}_{T_{4,n}}(u)$) and hence is a triangulated $2$-sphere. If $u \in \ZZ_7$ then, ${\rm lk}_{M_{jmn}}(u)$ is the
union of the $2$-discs ${\rm lk}_{T_{j,m}}(u)$ and ${\rm lk}_{T_{4,n}}(u)$ and  ${\rm lk}_{T_{j,m}}(u) \cap {\rm
lk}_{T_{4,n}}(u) = {\rm lk}_{T}(u)$ is a cycle. Therefore, ${\rm lk}_{M_{jmn}}(u)$ is a triangulated 2-sphere in
this case also. So, $M_{jmn}= T_{j,m} \cup T_{4,n}$ is a triangulated 3-manifold without boundary for $1\leq
j\leq 3$ and $m, n\geq 0$. Similarly, $M_{jn}= T_{j} \cup T_{4,n}$ is a triangulated 3-manifold without boundary
for $1\leq j\leq 3$ and $n\geq 0$.

The elements $\alpha_1, \dots, \alpha_4$ in $\pi_1(\mathcal{T}, 0)$ mentioned in Remark \ref{R1} and Lemma
\ref{LT4n} satisfy the following\,:
\begin{eqnarray}
\alpha_1 = 0 & \mbox{in } \, \pi_1(T_1, 0)  \mbox{ and in } \,
\pi_1(T_{1,m}, 0) \, \mbox{ for all } \, m\geq 0, \nonumber\\
\alpha_2 = 0 & \mbox{in } \, \pi_1(T_2, 0)  \mbox{ and in } \,
\pi_1(T_{2,m}, 0) \, \mbox{ for all } \, m\geq 0, \nonumber\\
\alpha_1 +\alpha_2 = \alpha_3 = 0 & \mbox{in } \, \pi_1(T_3, 0)
\mbox{ and in } \,
\pi_1(T_{3,m}, 0) \, \mbox{ for all } \, m\geq 0, \nonumber\\
2\alpha_1 + \alpha_3 = 3\alpha_1 + \alpha_2 = \alpha_4 = 0 &
\mbox{in } \, \pi_1(T_{4,m}, 0)  \, \mbox{ for all } \, m\geq 0, \nonumber\\
7\alpha_1 - 2\alpha_4 = \alpha_1 - 2\alpha_2 =  \alpha_5 = 0 &
\mbox{in } \, \pi_1(T_{5,m}, 0)  \, \mbox{ for all } \, m\geq 0.
\end{eqnarray}

Therefore, by Seifert-Van Kampen theorem, we have
\begin{eqnarray}
\pi_1(T_{1,m} \cup T_{4,n}, 0) =  \pi_1(T_{1} \cup T_{4,n}, 0) &=&
\langle\{\alpha_1, \alpha_2\}; \, \{\alpha_1,
3\alpha_1+\alpha_2\}\rangle = \{0\}, \label{T14mn} \\
\pi_1(T_{2,m} \cup T_{4,n}, 0)  =  \pi_1(T_{2} \cup T_{4,n}, 0) &=&
\langle\{\alpha_1, \alpha_2\}; \, \{\alpha_2, 3\alpha_1+
\alpha_2\}\rangle \cong \ZZ_3, \label{T24mn} \\
\pi_1(T_{3,m} \cup T_{4,n}, 0)  =  \pi_1(T_{3} \cup T_{4,n}, 0)
&=& \langle\{\alpha_1, \alpha_2\}; \, \{\alpha_1+\alpha_2,
3\alpha_1+\alpha_2\}\rangle \nonumber \\
&=& \langle\{\alpha_1, \alpha_3\}; \, \{\alpha_3,
2\alpha_1+\alpha_3\}
\rangle \cong \ZZ_2, \label{T34mn} \\
\pi_1(T_{4,m} \cup T_{5,n}, 0)
&=& \langle\{\alpha_1, \alpha_2\}; \, \{3\alpha_1+\alpha_2,
\alpha_1 - 2\alpha_2\}\rangle \nonumber \\
&=& \langle\{\alpha_1, \alpha_4\}; \, \{7\alpha_1-2\alpha_4,
\alpha_4\} \rangle \cong \ZZ_7. \label{T45mn}
\end{eqnarray}
By \eqref{T14mn}, $T_{1} \cup T_{4,n}$, $T_{1,m} \cup T_{4,n}$ triangulate the $3$-sphere $S^{\hspace{.2mm}3}$
for $m, n\geq 0$. Part (i) follows from this since $g^2\colon T_{2} \cup T_{5,n} \to T_{1} \cup T_{4,4n}$,
$g^2\colon T_{2,m} \cup T_{5,n} \to T_{1, 4m} \cup T_{4,4n}$, $g\colon T_{3} \cup T_{6,n} \to T_{1} \cup
T_{4,2n}$, $g\colon T_{3,m} \cup T_{6,n} \to T_{1, 2m} \cup T_{4,2n}$ are isomorphisms.

By \eqref{T24mn}, $T_{2} \cup T_{4,n}$, $T_{2,m} \cup T_{4,n}$ triangulate the lens space $L(3,1)$ for $m, n\geq
0$. Part (ii) follows from this since $g^2\colon T_{3} \cup T_{5,n} \to T_{2} \cup T_{4,4n}$, $g^2\colon T_{3,m}
\cup T_{5,n} \to T_{2, 4m} \cup T_{4,4n}$, $g\colon T_{1} \cup T_{6,n} \to T_{2} \cup T_{4,2n}$, $g\colon T_{1,m}
\cup T_{6,n} \to T_{2, 2m} \cup T_{4,2n}$ are isomorphisms.

By \eqref{T34mn}, $T_{3} \cup T_{4,n}$, $T_{3,m} \cup T_{4,n}$ triangulate the projective space
$\mathbb{RP}^{\hspace{.2mm}3}$ for $m, n\geq 0$. Part (iii) follows from this since $g^2\colon T_{1} \cup T_{5,n}
\to T_{3} \cup T_{4,4n}$, $g^2\colon T_{1,m} \cup T_{5,n} \to T_{3, 4m} \cup T_{4,4n}$, $g\colon T_{2} \cup
T_{6,n} \to T_{3} \cup T_{4,2n}$, $g\colon T_{2,m} \cup T_{6,n} \to T_{3, 2m} \cup T_{4,2n}$ are isomorphisms.

By \eqref{T45mn}, $T_{4,m} \cup T_{5,n}$ triangulates the lens space $L(7,-2) = L(7,2) $ for $m, n\geq 0$. Part
(iv) follows from this since $g^2\colon T_{5,m} \cup T_{6,n} \to T_{4, 4m} \cup T_{5,4n}$, $g\colon T_{4,m} \cup
T_{6,n} \to T_{5, 2m} \cup T_{4,2n}$ are isomorphisms.

Part (v) follows by the similar arguments as in Remark \ref{R2}.
\end{proof}

Similar to $T_{4,n}$, we now present another sequence of solid tori $T_{7,n}$, $n\geq 0$.

\begin{eg} \label{EB7n}
{\rm For each non-negative integer $n\geq 0$, consider the $3$-dimensional simplicial complex ${\mathcal B}_{7,n}
= {\mathcal D}^{\hspace{.3mm}\prime} \cup {\mathcal D}^{\hspace{.3mm}\prime\prime}$ on the vertex set $\{e_0,
e_1, \dots, e_6\}$ $\cup\{e_0^{\hspace{.3mm}\prime}, \dots, e_4^{\hspace{.3mm}\prime}\} \cup \{a_{n,1}, a_{n,2},
a_{n,3}\} $ $\cup \{u_{7,n}, v_{7,n}, w_{7,n}, u_{7,n}^{\hspace{.2mm}\prime}\}$. Where
\begin{eqnarray*} 
{\mathcal D}^{\hspace{.3mm}\prime}  & \!=\! & \{w_{7,n}u_{7,n}^{\hspace{.3mm}\prime}e_0^{\hspace{.3mm}\prime}
e_{1}^{\hspace{.3mm}\prime}, \dots, w_{7,n}u_{7,n}^{\hspace{.3mm}\prime} e_3^{\hspace{.3mm}\prime}
e_{4}^{\hspace{.3mm}\prime}, w_{7,n}u_{7,n}^{\hspace{.3mm}\prime}e_0^{\hspace{.3mm}\prime}
e_{4}^{\hspace{.3mm}\prime}, w_{7,n}e_0^{\hspace{.3mm}\prime} e_{1}^{\hspace{.3mm}\prime}a_{n,2},
w_{7,n}e_1^{\hspace{.3mm}\prime}e_{2}^{\hspace{.3mm}\prime}a_{n,3}, \\
&& ~~ w_{7,n}e_2^{\hspace{.3mm}\prime}e_{3}^{\hspace{.3mm}\prime}e_{5},
w_{7,n}e_3^{\hspace{.3mm}\prime}e_{4}^{\hspace{.3mm}\prime}e_{6},
w_{7,n}e_0^{\hspace{.3mm}\prime}e_{4}^{\hspace{.3mm}\prime}e_{6},
w_{7,n}e_{0}^{\hspace{.3mm}\prime}a_{n,1}a_{n,2},
w_{7,n}e_{1}^{\hspace{.3mm}\prime}a_{n,2}a_{n,3},
w_{7,n}e_{2}^{\hspace{.3mm}\prime}a_{n,3}e_{5}, \\
&& ~~ w_{7,n}e_{3}^{\hspace{.3mm}\prime}e_{5}e_{6},
 w_{7,n}e_{0}^{\hspace{.3mm}\prime}e_{6}a_{n,1},
v_{7,n}w_{7,n}a_{n,1}a_{n,2}, v_{7,n}w_{7,n}a_{n,2}a_{n,3},
v_{7,n}w_{7,n}a_{n,3}e_{5}, \\
&& ~~ v_{7,n}w_{7,n}e_{5}e_{6}, v_{7,n}w_{7,n}e_{6}a_{n,1},
v_{7,n}a_{n,1}e_{2}e_{3}, v_{7,n}a_{n,2}e_{3}e_{4},
v_{7,n}e_{0}e_{4}e_{5}, v_{7,n}e_{0}e_{1}e_{5}, \\
&& ~~ v_{7,n}e_{1}e_{2}e_{6}, v_{7,n}a_{n,1}a_{n,2}e_{3},
v_{7,n}a_{n,2}a_{n,3}e_{4}, v_{7,n}a_{n,3}e_{4}e_{5},
v_{7,n}e_{1}e_{5}e_{6}, v_{7,n}a_{n,1}e_{2}e_{6}, \\
&& ~~ u_{7,n}v_{7,n}e_0e_{1}, \dots, u_{7,n}v_{7,n}
e_3e_{4}, u_{7,n}v_{7,n}e_0e_{4}\}, \\
{\mathcal D}^{\hspace{.3mm}\prime\prime} &\!=\!& \{e_{0}^{\hspace{.3mm}\prime}a_{n,1}a_{n,2}e_{3},
e_{1}^{\hspace{.3mm}\prime}a_{n,2}a_{n,3}e_{4}, e_{0}^{\hspace{.3mm}\prime}a_{n,1}e_{2}e_{3},
e_{0}^{\hspace{.3mm}\prime}a_{n,1}e_{2}e_{6}, e_{0}^{\hspace{.3mm}\prime}
e_{1}^{\hspace{.3mm}\prime}a_{n,2}e_{3},
e_{1}^{\hspace{.3mm}\prime}a_{n,2}e_{3}e_{4}, \\
&& ~~
e_{1}^{\hspace{.3mm}\prime}e_{2}^{\hspace{.3mm}\prime}a_{n,3}e_{4},
e_{2}^{\hspace{.3mm}\prime}a_{n,3}e_{4}e_{5}\}.
\end{eqnarray*}
Consider the equivalence relation `$\sim$' on the vertex set $V({\mathcal B}_{7,n})$ generated by
$u_{7,n}^{\hspace{.3mm}\prime} \sim u_{7,n}$, $e_i^{\hspace{.3mm}\prime} \sim e_i$ for $0\leq i \leq 4$. Let
\begin{eqnarray} \label{ET7n}
T_{7, n} = {\mathcal B}_{7,n}/\hspace{-1.5mm}\sim.
\end{eqnarray}
We identify the vertices $[e_i]$, $[u_{7,n}]$, $[v_{7,n}]$ and $[w_{7,n}]$ in $T_{7,n}$ with $i$, $u_{7,n}$,
$v_{7,n}$ and $w_{7,n}$ respectively. So, $V(T_{7,n}) = \ZZ_7\cup \{a_{n,1}, \dots, a_{n,3}\} \cup\{u_{7,n},
v_{7,n}, w_{7,n}\}$ for $n\geq 0$. We assume that $V(T_{7,n}) \cap V(T_{7,m}) = \ZZ_7 = V(T_{7,n}) \cap
V(T_{j,\hspace{.1mm}l})$ for all $\ell, m\neq n \geq 0$, $1\leq j\leq 6$. Then
\begin{eqnarray*} \label{EXT7n}
T_{7,n}  & \!\!=\!\! & \{uw01, \dots, uw34, uw04, wb01, wc12, w235,
w346,  w046, wab0, wbc1,  wc25, w356,  \\
&& wa06,   vwab, vwbc, vwc5, vw56,
 vwa6, va23, vb34, v045, v015, v126, vab3, vbc4, vc45,   \\
&& v156, va26, uv01, \dots, uv34, uv04, ab03, bc14, a023, a026,
b013, b134, c124, c245\},
\end{eqnarray*}
where $u= u_{7,n}$, $v= v_{7,n}$, $w= w_{7,n}$, $a=a_{n,1}$,
$b=a_{n,2}$,  $c=a_{n,3}$.} \qed
\end{eg}


\setlength{\unitlength}{2.7mm}

\begin{picture}(50,26.5)(-10,1)

\thicklines

\put(19,9){\line(1,0){20}} \put(19,9){\line(0,1){12}}
\put(19,21){\line(1,0){20}} \put(39,9){\line(0,1){12}}

\put(29,5){\line(-5,2){10}} \put(29,5){\line(5,2){10}}
\put(29,25){\line(-5,-2){10}} \put(29,25){\line(5,-2){10}}

\put(-7,9){\line(1,0){20}} \put(-7,9){\line(0,1){12}}
\put(-7,21){\line(1,0){20}} \put(13,9){\line(0,1){12}}

\put(3,5){\line(-5,2){10}} \put(3,5){\line(5,2){10}}
\put(3,25){\line(-5,-2){10}} \put(3,25){\line(5,-2){10}}

\thinlines


\put(31,15){\line(1,0){4}}

\put(23,9){\line(0,1){12}} \put(27,9){\line(0,1){12}}
\put(31,9){\line(0,1){12}} \put(35,9){\line(0,1){12}}

\put(19,9){\line(1,3){4}}
\put(23,9){\line(1,3){4}} \put(27,9){\line(2,3){8}}
\put(31,9){\line(2,3){8}} \put(31,15){\line(-2,3){4}}
\put(39,9){\line(-2,3){4}}

\put(29,5){\line(-3,2){6}} \put(29,5){\line(-1,2){2}}
\put(29,5){\line(3,2){6}} \put(29,5){\line(1,2){2}}

\put(29,25){\line(-3,-2){6}} \put(29,25){\line(-1,-2){2}}
\put(29,25){\line(3,-2){6}} \put(29,25){\line(1,-2){2}}


\put(-7,15){\line(1,0){20}}

\put(-3,9){\line(0,1){12}} \put(1,9){\line(0,1){12}}
\put(5,9){\line(0,1){12}} \put(9,9){\line(0,1){12}}

\put(-7,9){\line(2,3){8}} \put(-7,15){\line(2,3){4}}
\put(-3,9){\line(2,3){4}} \put(1,9){\line(2,3){8}}
\put(5,9){\line(2,3){8}} \put(5,15){\line(-2,3){4}}
\put(13,9){\line(-2,3){4}}

\put(3,5){\line(-3,2){6}} \put(3,5){\line(-1,2){2}}
\put(3,5){\line(3,2){6}} \put(3,5){\line(1,2){2}}

\put(3,25){\line(-3,-2){6}} \put(3,25){\line(-1,-2){2}}
\put(3,25){\line(3,-2){6}} \put(3,25){\line(1,-2){2}}




\put(17.5,9.5){\mbox{$e_{0}^{\hspace{.3mm}\prime}$}}
\put(21.5,9.5){\mbox{$e_{1}^{\hspace{.3mm}\prime}$}}
\put(25.5,9.5){\mbox{$e_{2}^{\hspace{.3mm}\prime}$}}
\put(29.5,9.5){\mbox{$e_{3}^{\hspace{.3mm}\prime}$}}
\put(33.5,9.5){\mbox{$e_{4}^{\hspace{.3mm}\prime}$}}
\put(39.4,9.5){\mbox{$e_{0}^{\hspace{.3mm}\prime}$}}

\put(29,15){\mbox{$e_{5}$}}
\put(35.8,14.8){\mbox{$e_{6}$}}

\put(17.5,20){\mbox{$e_{2}$}}
\put(23.4,20){\mbox{$e_{3}$}}
\put(28,20){\mbox{$e_{4}$}} \put(31.5,20){\mbox{$e_{0}$}}
\put(35.5,20){\mbox{$e_{1}$}} \put(39.4,20){\mbox{$e_{2}$}}

\put(30,25){\mbox{$u_{7,n}$}} \put(29.5,4){\mbox{$u_{7,
n}^{\hspace{.3mm}\prime}$}}

\put(22,4){\mbox{$\partial {\mathcal B}_{7,n}$}}


\put(-8.7,9.7){\mbox{$e_{0}^{\hspace{.3mm}\prime}$}}
\put(-4.7,9.7){\mbox{$e_{1}^{\hspace{.3mm}\prime}$}}
\put(-0.7,9.7){\mbox{$e_{2}^{\hspace{.3mm}\prime}$}}
\put(3.3,9.7){\mbox{$e_{3}^{\hspace{.3mm}\prime}$}}
\put(7.3,9.7){\mbox{$e_{4}^{\hspace{.3mm}\prime}$}}
\put(13.5,9.7){\mbox{$e_{0}^{\hspace{.3mm}\prime}$}}

\put(-9.5,15.5){\mbox{$a_{n,1}$}}
\put(-5.5,15.5){\mbox{$a_{n,2}$}}
\put(-1.5,15.5){\mbox{$a_{n,3}$}}
\put(2.8,15.5){\mbox{$e_{5}$}}
\put(7.3,15.5){\mbox{$e_{6}$}}
\put(10.5,15.5){\mbox{$a_{n,1}$}}

\put(-8.5,20){\mbox{$e_{2}$}}
\put(-2.6,20){\mbox{$e_{3}$}}
\put(2,20){\mbox{$e_{4}$}} \put(5.5,20){\mbox{$e_{0}$}}
\put(9.5,20){\mbox{$e_{1}$}} \put(13.4,20){\mbox{$e_{2}$}}

\put(4,25){\mbox{$u_{7,n}$}} \put(3.5,4){\mbox{$u_{7,
n}^{\hspace{.3mm}\prime}$}}

\put(-5,4){\mbox{$\partial {\mathcal D}^{\hspace{.3mm}\prime}$}}

\put(13,2){\mbox{\bf Figure\,2c}}
\end{picture}

\begin{lemma} \label{LT7n}
For $n \geq 0$, the simplicial complexes ${\mathcal B}_{7,n}$ and $T_{7, n}$ defined in Example $\ref{EB7n}$
satisfy the following.
\begin{enumerate}[{\rm (i)}]
\item ${\mathcal B}_{7,n}$ is a triangulated $3$-ball with boundary $\partial {\mathcal B}_{7,n}  = {\mathcal
E}_1 \cup {\mathcal E}_2 \cup {\mathcal E}_3$, where 
$$
{\mathcal E}_1 = \{u_{7, n}e_0e_{4}, u_{7, n}e_ie_{i + 1}
: 1\leq i \leq 3\}, {\mathcal E}_2 = \{u_{7, n}^{\hspace{.3mm}\prime} e_0^{\hspace{.3mm}\prime}
e_{4}^{\hspace{.3mm}\prime}, u_{7, n}^{\hspace{.3mm}\prime} e_i^{\hspace{.3mm}\prime} e_{i +
1}^{\hspace{.3mm}\prime} : 0\leq i \leq 3\},
$$ 
${\mathcal E}_3 = \{e_{i}^{\hspace{.3mm}\prime}e_{i+
1}^{\hspace{.3mm}\prime}e_{i + 3}, e_{i}^{\hspace{.3mm}\prime} e_{i + 2}e_{i + 3} : 0 \leq i \leq 3 \} \cup
\{e_{0}^{\hspace{.3mm}\prime} e_{4}^{\hspace{.3mm}\prime} e_{6}, e_{0}^{\hspace{.3mm}\prime} e_{2}e_{6},
e_0e_1e_5, e_0e_4e_5, e_1e_2e_6, e_1e_5e_6\}.$

\item $T_{7, n}$ is a triangulated solid torus with boundary $\partial T_{7, n} = \mathcal{T}$.

\item If $\alpha_1$, $\alpha_2$ are as in Remark $\ref{R1}$ and $\alpha_7 = [c_7] \in \pi_1(\mathcal{T}, 0)$,
where $c_7$ is the loop $012340$ in $\mathcal{T}$, then $\alpha_7 = 2\alpha_1 +\alpha_2$ and $\alpha_7 =0$ in
$\pi_1(T_{7,n}, 0)$.
\end{enumerate}
\end{lemma}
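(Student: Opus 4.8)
The plan is to mirror the proof of Lemma~\ref{LT4n} step for step, since $\mathcal{B}_{7,n}$ and $T_{7,n}$ are assembled in exactly the same way as $\mathcal{B}_{4,n}$ and $T_{4,n}$.

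For part~(i), I would write $\mathcal{B}_{7,n}$ as an ordered union of triangulated $3$-balls and check that each new piece meets the union of the previous ones in a single triangulated $2$-disc; once this is verified, the same easy induction as in Lemma~\ref{LT4n}~(i) shows the whole complex is a $3$-ball. The natural order is dictated by the ``layers'' visible in the construction of $\mathcal{D}'$: first the cone $w_{7,n}u_{7,n}'\ast(\text{pentagon }e_0'e_1'e_2'e_3'e_4')$ consisting of the five tetrahedra $w_{7,n}u_{7,n}'e_i'e_{i+1}'$, then the $w_{7,n}$-layer joining this pentagon to the cycle $a_{n,1}a_{n,2}a_{n,3}e_5e_6$, then the layer $v_{7,n}w_{7,n}\ast(\text{cycle }a_{n,1}a_{n,2}a_{n,3}e_5e_6)$, then the $v_{7,n}$-layer down to the pentagon $e_0e_1e_2e_3e_4$, then the cone $u_{7,n}v_{7,n}\ast(\text{pentagon }e_0\cdots e_4)$, and finally the tetrahedra of $\mathcal{D}''$ attached one at a time. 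Each of these is itself a ball, and I would verify (guided by Figure~2c) that the attaching region is always a disc; in particular $\mathcal{D}'$ is a ball. The boundary identity $\partial\mathcal{B}_{7,n}=\mathcal{E}_1\cup\mathcal{E}_2\cup\mathcal{E}_3$ then follows by listing the $2$-faces lying in a unique facet. I expect this bookkeeping to be the main obstacle, since there are far more facets than in $\mathcal{B}_{4,n}$ and one must fix the order so that no attaching region degenerates into two discs or an annulus.

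For part~(ii), exactly as in Lemma~\ref{LT4n}~(ii), I would check that no vertex of $\mathcal{B}_{7,n}$ is a common neighbour of the identified pair $u_{7,n}',u_{7,n}$, nor of any identified pair $e_i',e_i$ with $0\le i\le 4$. This guarantees that passing to the quotient $T_{7,n}=\mathcal{B}_{7,n}/{\sim}$ creates no singular vertex links, so $T_{7,n}$ is a triangulated $3$-manifold with boundary, and the boundary is the quotient $\mathcal{E}_3/{\sim}\,=\mathcal{T}$. Topologically, $|T_{7,n}|$ is obtained from the $3$-ball $|\mathcal{B}_{7,n}|$ by gluing the pentagonal disc $|\mathcal{E}_1|$ to the pentagonal disc $|\mathcal{E}_2|$ along the simplicial isomorphism $u_{7,n}\mapsto u_{7,n}'$, $e_i\mapsto e_i'$; since gluing two disjoint boundary discs of a ball yields a solid torus, this proves~(ii).

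For part~(iii), the class $\alpha_7=[c_7]$ with $c_7=012340$ is computed by the face-collapsing moves of Section~\ref{preli} in the torus $\mathcal{T}$, using $\alpha_1=[0160]$, $\alpha_2=[0250]$, $\alpha_3=[0340]=\alpha_1+\alpha_2$ of Remark~\ref{R1} together with $\alpha_4=[01234560]=3\alpha_1+\alpha_2$ of Lemma~\ref{LT4n}. Concatenating $c_7$ with the loop $04560$ and collapsing the backtrack $404$ gives $01234560=c_4$, so $\alpha_7+[04560]=\alpha_4$; and the reductions $04560\simeq 0560\simeq 05160\simeq 0160$ (across the faces $045$, $156$, $015$ of $\mathcal{T}$) give $[04560]=\alpha_1$, whence $\alpha_7=\alpha_4-\alpha_1=2\alpha_1+\alpha_2$. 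Finally, $c_7$ is exactly the boundary of the pentagonal disc in $T_{7,n}$ obtained by coning $u_{7,n}$ over $e_0e_1e_2e_3e_4$ (the image of $\mathcal{E}_1$), and this disc lies in $T_{7,n}$; hence $\alpha_7=0$ in $\pi_1(T_{7,n},0)$, completing the proof.
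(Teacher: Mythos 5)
Your parts (ii) and (iii) are correct and essentially the paper's argument: the paper proves (ii) by the same common-neighbour check followed by gluing the two pentagonal boundary discs of the ball, and for (iii) it reduces $012340\simeq 0160340$ directly to get $\alpha_7=\alpha_1+\alpha_3=2\alpha_1+\alpha_2$, while your detour through $c_4$ and $[04560]=\alpha_1$ is a harmless variant; the nullhomotopy via the disc coned off by $u_{7,n}$ over $e_0e_1e_2e_3e_4$ is exactly the paper's $2$-disc $\mathcal{E}$.

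Part (i), however, contains a genuine error: your second and fourth ``layers'' are \emph{not} balls. The $w_{7,n}$-layer between the pentagon $e_0'e_1'e_2'e_3'e_4'$ and the cycle $a_{n,1}a_{n,2}a_{n,3}e_5e_6$ is the cone $w_{7,n}\ast A$, where $A$ is a triangulated annulus; the link of the apex $w_{7,n}$ in this subcomplex is $A$ itself, which is neither a $2$-disc nor a $2$-sphere, so $|w_{7,n}\ast A|$ is not even a $3$-manifold with boundary at the apex (locally, the boundary of a small neighbourhood is a sphere with two points identified). The same applies to your $v_{7,n}$-layer down to $e_0\cdots e_4$. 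Consequently the induction ``ball glued to ball along a disc is a ball'' cannot be run with your pieces at those two steps, and the hedge about choosing the attaching order does not save it --- the defect is in the pieces, not the order. The paper's decomposition is designed precisely to dodge this: its $\mathcal{D}_1$ is your first two layers merged, i.e.\ $w_{7,n}$ coned over the $2$-disc (cone $u_{7,n}'\ast\mathrm{pentagon}$) $\cup$ (annulus), its $\mathcal{D}_2$ is your third and fourth layers merged ($v_{7,n}$ coned over (annulus) $\cup$ ($w_{7,n}\ast\mathrm{cycle}$)), and its $\mathcal{D}_3$ is your fifth layer; each is then a cone over a disc, hence a ball, and the eight pieces $\mathcal{D}_1,\dots,\mathcal{D}_8$ attach along discs. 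Alternatively you can keep your layering but insert the annular layers one tetrahedron at a time: each tetrahedron meets the preceding union in one or two triangles, and even the ring-closing tetrahedron meets it in two triangles sharing an edge, still a disc. Either repair restores the proof; as written, the claim ``each of these is itself a ball'' is false.
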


\begin{proof}
Consider the eight triangulated 3-balls

${\mathcal D}_1 := \{w_{7,n}\}\ast \{u_{7,n}^{\hspace{.2mm}\prime}
e_0^{\hspace{.2mm}\prime} e_{1}^{\hspace{.2mm}\prime}, \dots, u_{7,n}^{\hspace{.2mm}\prime}
e_3^{\hspace{.2mm}\prime} e_{4}^{\hspace{.2mm}\prime}, ~ u_{7, n}^{\hspace{.2mm}\prime} e_0^{\hspace{.2mm}\prime}
e_{4}^{\hspace{.2mm}\prime}$,
$~e_0^{\hspace{.3mm}\prime}e_{1}^{\hspace{.3mm}\prime}a_{n,2}, ~e_1^{\hspace{.3mm}\prime}
e_{2}^{\hspace{.3mm}\prime}a_{n,3}, ~e_2^{\hspace{.3mm}\prime}e_{3}^{\hspace{.3mm}\prime}e_{5},
~e_3^{\hspace{.3mm}\prime}e_{4}^{\hspace{.3mm}\prime}e_{6},$ $ ~e_0^{\hspace{.3mm}\prime}
e_{4}^{\hspace{.3mm}\prime}e_{6}, ~e_{0}^{\hspace{.3mm}\prime}a_{n,1}a_{n,2}, ~e_{1}^{\hspace{.3mm}\prime}
a_{n,2}a_{n,3}, ~e_{2}^{\hspace{.3mm}\prime} a_{n,3}e_{5}, ~e_{3}^{\hspace{.3mm}\prime} e_{5}e_{6},
~e_{0}^{\hspace{.3mm}\prime} e_{6}a_{n,1}\}$, \newline

${\mathcal D}_2 := \{v_{7,n}\}\ast \{a_{n,1}e_{2}e_{3}, ~a_{n,2}e_{3}e_{4}, ~e_{0}e_{4}e_{5}, ~e_{0}e_{1}e_{5},
~e_{1}e_{2}e_{6}, ~a_{n,1}a_{n,2}e_{3}, ~a_{n,2}a_{n,3}e_{4}, ~a_{n,3}e_{4}e_{5},$ $ ~e_{1}e_{5}e_{6}, 
~a_{n,1}e_{2}e_{6}, ~w_{7,n}a_{n,1}a_{n,2}, ~w_{7,n}a_{n,2}a_{n,3}, ~w_{7,n}a_{n,3}e_{5}, ~w_{7,n}e_{5}e_{6},
~w_{7,n}e_{6}a_{n,1}\}$,

${\mathcal D}_3 := \{u_{7,n}\}\ast \{v_{7,n}e_0e_{1}$, $~\dots, v_{7,n}e_3e_{4}, ~v_{7,n}e_0e_{4}\}$,
\quad ${\mathcal D}_4 := \{e_{0}^{\hspace{.3mm}\prime} a_{n,1}a_{n,2}e_{3}\}$,\newline

${\mathcal D}_5 := \{e_{1}^{\hspace{.3mm}\prime} a_{n,2}a_{n,3}e_{4}\}$,
\quad ${\mathcal D}_6 := \{e_{0}^{\hspace{.3mm}\prime}\}\ast \{a_{n,1}e_{2}e_{3}$, $~a_{n,1}e_{2}e_{6}\}$,\newline

${\mathcal D}_7 := \{e_{0}^{\hspace{.3mm}\prime}e_{1}^{\hspace{.3mm}\prime}a_{n,2}e_{3},
~e_{1}^{\hspace{.3mm}\prime}a_{n,2}e_{3}e_{4}\}$,
~ and ~${\mathcal D}_8 := \{e_{1}^{\hspace{.3mm}\prime}e_{2}^{\hspace{.3mm}\prime}a_{n,3}e_{4},
~e_{2}^{\hspace{.3mm}\prime}a_{n,3}e_{4}e_{5}\}$.

Clearly, ${\mathcal B}_{7,n} = {\mathcal D}_1 \cup \cdots \cup
{\mathcal D}_{8}$. Observe that ${\mathcal D}_i \cap ({\mathcal D}_1 \cup \cdots \cup {\mathcal D}_{i-1})$ is a
triangulated 2-disc for $2 \leq i \leq 8$. This implies that ${\mathcal B}_{7,n} = {\mathcal D}_1 \cup \cdots
\cup {\mathcal D}_{8}$ is a triangulated 3-ball. This proves the first part of (i). The second part of (i)
follows from the definition of ${\mathcal B}_{7,n}$.

There is no vertex in ${\mathcal B}_{7,n}$ which is a common neighbour of $e_{i}^{\hspace{.3mm}\prime}$ and
$e_{i}$ for $0\leq i \leq 3$ and there is no vertex in ${\mathcal B}_{7,n}$ which is a common neighbour of
$u_{7,n}^{\hspace{.3mm}\prime}$ and $u_{7,n}$. This implies that the quotient complex $T_{7,n}$ is a triangulated
$3$-manifold with boundary. Clearly, the boundary $\partial T_{7, n}$ is the quotient ${\mathcal
E}_3/\hspace{-1.5mm}\sim \, = \mathcal{T}$. Since the space $|T_{7,n}|$ can be obtained from $|{\mathcal
B}_{7,n}|$ by identifying the 2-disc $|{\mathcal E}_1|$ with the 2-disc $|{\mathcal E}_2|$ (via the simplicial
isomorphism from ${\mathcal E}_1$ to ${\mathcal E}_2$ given by $u_{7,n} \mapsto u_{7,n}^{\hspace{.3mm}\prime}$,
$e_i \mapsto e_i^{\hspace{.3mm}\prime} $ for $0\leq i \leq 4$), $|T_{7,n}|$ is a solid torus. This proves (ii).

Now, $012340 \simeq 0162340 \simeq 01602340 \simeq 0160340$ (see Figure 2a). Thus, $\alpha_7= [c_7] = [012340] =
[0160] + [0340] = \alpha_1 + (\alpha_1 + \alpha_2) = 2\alpha_1 + \alpha_2$. Let ${\mathcal E}$ be the 2-disc in
$T_{7,n}$ corresponding to the 2-disc ${\mathcal E}_1$ in ${\mathcal B}_{7,n}$. Then, $c_7$ is the boundary of
${\mathcal E}$ and hence $\alpha_7 = [c_7] =0$ in $\pi_1(T_{7,n}, 0)$.
\end{proof}

\begin{eg} \label{ET89n}
{\rm Let $A_n = \{a_{n,1}, a_{n,2}, a_{n,3}\}$, $U_7 = \{u_{7,n} : n\geq 0\}$, $V_7 = \{v_{7,n} : n\geq 0\}$,
$W_7 = \{w_{7,n} : n\geq 0\}$ be as in Example \ref{EB7n}. Let $A = \cup_{n\geq 0}A_n$.
Take new (disjoint) sets of vertices
\begin{align*}
B &= \cup_{n\geq 0}B_n = \cup_{n\geq 0}\{b_{n,1}, b_{n,2}, b_{n,3}\}, \quad C = \cup_{n\geq 0}C_n = \cup_{n\geq
0}\{c_{n,1}, c_{n,2}, c_{n,3}\}, \\
U_j &= \{u_{j,n} \, : n\geq 0\}, \quad V_j = \{v_{j,n} \, : n\geq 0\}, \quad W_j = \{w_{j,n} \, : n\geq 0\}
~\mbox{for} ~8 \leq j\leq 9.
\end{align*}

Let $f$ and $g$ be as in \eqref{maps_fg456}. Consider the
following extensions of $f$ and $g$ (also denoted by $f$ and $g$ respectively) $ {f},  {g} \, \colon \ZZ_7 \sqcup
(Q\sqcup R\sqcup S) \sqcup (A\sqcup B\sqcup C) \sqcup (\sqcup_{i=1}^9(U_{i} \sqcup V_{i} \sqcup W_{i})) \to \ZZ_7
\sqcup (Q\sqcup R\sqcup S) \sqcup (A\sqcup B\sqcup C) \sqcup (\sqcup_{i=1}^9(U_{i} \sqcup  V_{i} \sqcup W_{i}))$
as\,:
\begin{eqnarray} \label{maps_fg789}
&&  {f}(u_{j,n}) = u_{j,n+1},  {f}(v_{j,n}) = v_{j,n+1}, {f}(w_{j,n}) = w_{j,n+1}, \, \mbox{ for } 7\leq j \leq
9, n\geq 0,  \nonumber \\
&&  {f}(a_{n,i}) = a_{n+1,i},  {f}(b_{n,i}) =
b_{n+1,i},  {f}(c_{n,i}) = c_{n+1,i}, \, \mbox{ for }
1\leq i\leq 3,  n\geq 0 \, \mbox{ and } \nonumber \\
&&  {g}(u_{7,n}) = u_{8,2n}, \,  {g}(u_{8,n}) =
u_{9,2n}, \,  {g}(u_{9,n}) = u_{7,2n}, \,  {g}(v_{7,n})
= v_{8,2n}, \,  {g}(v_{8,n}) = v_{9,2n},  \nonumber \\
&&  {g}(v_{9,n}) = v_{7,2n}, \,  {g}(w_{7,n}) = w_{8,2n}, \,
{g}(w_{8,n}) = w_{9,2n}, \, {g}(w_{9,n}) = w_{7,2n}, \,
\mbox{ for } \, n \geq 0, \nonumber \\
&&  {g}(a_{n,i}) = b_{2n,i}, \,  {g}(b_{n,i}) = c_{2n,i}, \,
{g}(c_{n,i}) = a_{2n,i}, \, 1\leq i\leq 3,  n\geq 0.
\end{eqnarray}
Then, $ {g} \circ  {f} = { {f}}^2 \circ  {g}$. Let
\begin{eqnarray} \label{T89n}
T_{8, 0} :=  {g}(T_{7,0}), &&
T_{9, 0} := { {g}}^{2}(T_{7,0}), \nonumber \\
T_{j,n} := { {f}}^{\hspace{.2mm}n}(T_{j,0}) && \mbox{for } \, n
\geq 1 \, \mbox{ and } \, j= 8, 9.
\end{eqnarray}
For $7\leq j\leq 9$ and $n \geq 0$, the number of vertices in $T_{j, n}$ is $7+3+3=13$.} \qed
\end{eg}

\begin{lemma} \label{LT89n}
For $n \geq 0$, $T_{7,n}$ defined in Example $\ref{EB7n}$ and $T_{8, n}$, $T_{9, n}$ defined in
Example $\ref{ET89n}$ satisfy the following.
\begin{enumerate}[{\rm (i)}]
\item $T_{i, m}\cong T_{j, n}$ for $7\leq i, j\leq 9$ and $m, n\geq 0$.

\item $T_{j, n}$ is a triangulated solid torus with boundary $\partial T_{j, n} = \mathcal{T}$ for $j = 8, 9$,
$n\geq 0$.

\item Let $\alpha_1$, $\alpha_2$ be as in Remark $\ref{R1}$ and $\alpha_8 = [c_8], \alpha_9 = [c_9] \in
\pi_1(\mathcal{T}, 0)$, where $c_8=016420$, $c_9=025140$ are loops in $\mathcal{T}$. Then, $\alpha_8 = \alpha_1 -
\alpha_2$, $\alpha_9 = \alpha_1 + 2 \alpha_2$ and $\alpha_j =0$ in $\pi_1(T_{j,n}, 0)$ for $j=8,9$, $n\geq 0$.
\end{enumerate}
\end{lemma}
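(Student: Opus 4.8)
The plan is to follow verbatim the structure of the proof of Lemma \ref{LT56n}, since $(T_{7,n}, T_{8,n}, T_{9,n})$ are built from the base solid torus $T_{7,0}$ via the maps $f, g$ of \eqref{maps_fg789} in exactly the way $(T_{4,n}, T_{5,n}, T_{6,n})$ are built from $T_{4,0}$ via \eqref{maps_fg456}. For part (i), I would read off from \eqref{T89n} the chain of simplicial isomorphisms $T_{i,m} \cong T_{i,0} \cong T_{7,0} \cong T_{j,0} \cong T_{j,n}$ for $7 \le i, j \le 9$ and $m, n \ge 0$, using that $T_{8,0} = g(T_{7,0})$, $T_{9,0} = g^2(T_{7,0})$ and $T_{j,n} = f^n(T_{j,0})$, with $f, g$ bijections inducing isomorphisms of the complexes involved. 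Part (ii) is then immediate: part (i) gives $T_{j,n} \cong T_{7,0}$, which is a triangulated solid torus with boundary $\mathcal{T}$ by Lemma \ref{LT7n}(ii), and since $f$ and $g$ restrict to automorphisms of $\mathcal{T}$ (so $f(\mathcal{T}) = \mathcal{T} = g(\mathcal{T})$) and $\partial T_{7,0} = \mathcal{T}$, the boundary is preserved, giving $\partial T_{j,n} = \mathcal{T}$.

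The substance is in part (iii), where the only genuine computation is tracking the loops under $g$, whose action on $\ZZ_7$ is $i \mapsto 2i \pmod 7$. I would first verify
$$ g(c_7) = g(012340) = 024610 = \overline{c_8}, \qquad g(c_8) = g(016420) = 025140 = c_9, $$
giving $g_*(\alpha_7) = -\alpha_8$ and $g_*(\alpha_8) = \alpha_9$ on $\pi_1(\mathcal{T}, 0)$. Combining these with the identities $g_*(\alpha_1) = \alpha_2$ and $g_*(\alpha_2) = -\alpha_1 - \alpha_2$ (already established in the proof of Lemma \ref{LT56n}) and with $\alpha_7 = 2\alpha_1 + \alpha_2$ from Lemma \ref{LT7n}(iii), a direct calculation yields
$$ \alpha_8 = -g_*(\alpha_7) = \alpha_1 - \alpha_2, \qquad \alpha_9 = g_*(\alpha_8) = \alpha_1 + 2\alpha_2, $$
as claimed.

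For the vanishing statements I would argue homotopically as in Lemma \ref{LT56n}: since $g \colon T_{7,0} \to T_{8,0}$ is a simplicial isomorphism, $g_*$ carries $\pi_1(T_{7,0}, 0)$ isomorphically to $\pi_1(T_{8,0}, 0)$, so $\alpha_8 = g_*(-\alpha_7) = g_*(0) = 0$ in $\pi_1(T_{8,0}, 0)$ by Lemma \ref{LT7n}(iii); then $f^n \colon T_{8,0} \to T_{8,n}$ is an isomorphism fixing the class $[c_8]$ (as $f|_{\ZZ_7}$ is a translation of $\mathcal{T}$ and hence acts trivially on $\pi_1(\mathcal{T}, 0)$), whence $\alpha_8 = 0$ in $\pi_1(T_{8,n}, 0)$; the identical argument with $c_9$ and $g^2$ gives $\alpha_9 = 0$ in $\pi_1(T_{9,n}, 0)$. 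I expect no real obstacle: the entire difficulty is the bookkeeping verification that $g$ sends $c_7 \mapsto \overline{c_8}$ and $c_8 \mapsto c_9$, after which the abstract isomorphism argument transports the vanishing of $\alpha_7$ established in Lemma \ref{LT7n} to that of $\alpha_8$ and $\alpha_9$ without further work.
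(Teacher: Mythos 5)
Your proposal is correct and follows essentially the same route as the paper: part (i) via the chain $T_{i,m}\cong T_{i,0}\cong T_{7,0}\cong T_{j,0}\cong T_{j,n}$, part (ii) from Lemma \ref{LT7n}(ii) together with $f(\mathcal{T})=\mathcal{T}=g(\mathcal{T})$, and part (iii) by the same loop computations $g(c_7)=024610=\overline{c}_8$, $g(c_8)=c_9$ combined with $g_{\ast}(\alpha_1)=\alpha_2$, $g_{\ast}(\alpha_2)=-\alpha_1-\alpha_2$ and transport of the vanishing of $\alpha_7$ under the isomorphisms $g$ and $f^n$. Your parenthetical justification that $f|_{\ZZ_7}$ acts trivially on $\pi_1(\mathcal{T},0)$ is in fact a slightly more careful version of the paper's bare assertion that $f^n(c_8)=c_8$.
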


\begin{proof}
From the definition in \eqref{T89n}, $T_{i,m} \cong T_{i,0}\cong T_{7,0} \cong T_{j,0} \cong T_{j,n}$ for $7\leq
i, j\leq 9$ and $m, n\geq 0$. This proves part (i).

By part (i), $T_{j, n}\cong T_{7,0}$. Therefore, by Lemma \ref{LT7n} (ii), $T_{j,n}$ triangulates the solid
torus. Since $f(\mathcal{T}) = \mathcal{T} = g(\mathcal{T})$ and $\partial T_{7,0} =\mathcal{T}$, it follows that
$\partial T_{j, n} = \mathcal{T}$. This proves (ii).

Observe that $g(c_7) = g(012340) = 024610 = \overline{c}_8$ and $g(c_8) = c_9$, where $\overline{c}_8$ denotes 
the loop $c_8$ with opposite orientation. Therefore, $g_{\ast}(\alpha_7) =
-\alpha_8$ and $g_{\ast}(\alpha_8) = \alpha_9$, where $g_{\ast} \colon \pi_1(\mathcal{T}, 0) \to
\pi_1(\mathcal{T}, 0)$ is induced by the map $g$. Therefore, $\alpha_8 = - g_{\ast}(\alpha_7) =
-g_{\ast}(2\alpha_1 +\alpha_2) = -2\alpha_2 + (\alpha_1 + \alpha_2) = \alpha_1- \alpha_2$ and hence $\alpha_9 =
g_{\ast}(\alpha_8) = g_{\ast}(\alpha_1 -\alpha_2) = \alpha_2 + (\alpha_1 +\alpha_2) = \alpha_1 + 2\alpha_2$. This
proves the first part of (iii).

Since $g \colon T_{7,0} \to T_{8,0}$ is an isomorphism, $g_{\ast} \colon \pi_1(T_{7,0}, 0) \to \pi_1(T_{8,0}, 0)$
is an isomorphism. Therefore, $\alpha_8 = g_{\ast}(-\alpha_7) = g_{\ast}(0) =0$ in $\pi_1(T_{8,0},0)$. (In fact,
$c_8$ is the boundary of the 2-disc $g({\mathcal E})$ in $T_{8,0}$, where ${\mathcal E}$ is the 2-disc defined in
the proof of Lemma \ref{LT7n} (iii).) Now, $f^n \colon T_{8,0} \to T_{8,n}$ is an isomorphism and $f^n(c_8) =
c_8$. This implies that $\alpha_8 = [c_8] =0$ in $\pi_1(T_{8,n},0)$. Similarly, $\alpha_9 = [c_9] =0$ in
$\pi_1(T_{9, n},0)$. This completes the proof of (iii).
\end{proof}

\begin{cor} \label{CT7n}
For $n\geq 0$, the solid tori $T_1, T_2, T_3$ defined in \eqref{ET123}, $T_{1,n}, T_{2,n}, T_{3,n}$ defined in
\eqref{ET123n}, $T_{4,n}$ defined in \eqref{ET4n}, $T_{5,n}$, $T_{6,n}$ in \eqref{T56n}, $T_{7,n}$ defined in
\eqref{ET7n}, $T_{8,n}$, $T_{9,n}$ defined in \eqref{T89n} satisfy the following.
\begin{enumerate}[{\rm (i)}]
\item $T_{i} \cup T_{j,n}$, $T_{i,m} \cup T_{j,n}$, $T_{k,m} \cup T_{\ell,n}$ triangulate the $3$-sphere
$S^{\hspace{.2mm}3}$ for \, $m, n\geq 0$, \, $(i, j) \in \{(1,7), (1,8), (2, 8), (2, 9), (3, 7), (3,9)\}$, $(k,
l) \in \{(4,7), (5,8), (6, 9)\}$.

\item $T_{i} \cup T_{j,n}$, $T_{i,m} \cup T_{j,n}$ triangulate the projective space
$\mathbb{RP}^{\hspace{.2mm}3}$ for \, $m, n\geq 0$, \, $(i, j) \in \{(1,9)$, $(2, 7), (3,8)\}$.

\item $T_{4,m} \cup T_{8,n}$, $T_{5,m} \cup T_{9,n}$, $T_{6,m} \cup T_{7,n}$ triangulate the lens space $L(4,1)$
for $m, n\geq 0$.

\item $T_{4,m} \cup T_{9,n}$, $T_{5,m} \cup T_{7,n}$, $T_{6,m} \cup T_{8,n}$ triangulate the lens space $L(5,2)$
for $m, n\geq 0$.

\item[{\rm (v)}] $T_{7,m} \cup T_{8,n}$, $T_{8,m} \cup T_{9,n}$, $T_{9,m} \cup T_{7,n}$ triangulate the lens
space $L(3,1)$ for $m, n\geq 0$.
\item $T_{j,m} \cup T_{j,n}$ triangulates $S^{\hspace{.2mm}2} \times S^1$ for $7 \leq j \leq 9$ and $m \neq n
\geq 0$.
\end{enumerate}
\end{cor}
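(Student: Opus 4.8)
The plan is to follow the structure of the proof of Corollary~\ref{CT4n} almost verbatim. \emph{Step 1 (each union is a closed $3$-manifold).} By Lemmas~\ref{LT123n}, \ref{LT4n}, \ref{LT56n}, \ref{LT7n} and~\ref{LT89n}, every solid torus occurring here has boundary $\mathcal{T}$, and any two of them (from different families, or from the same family with different second index) meet exactly along $\mathcal{T}$; this is checked on faces exactly as in the first paragraph of the proof of Corollary~\ref{CT4n}. Hence in a union $T_a\cup T_{b,n}$ (or $T_{a,m}\cup T_{b,n}$) the link of an interior vertex is its link in one solid torus, a $2$-sphere, while the link of a common vertex $k\in\ZZ_7$ is the union of the two $2$-discs ${\rm lk}_{T_a}(k)$ and ${\rm lk}_{T_{b,n}}(k)$ along the cycle ${\rm lk}_{\mathcal{T}}(k)$, again a $2$-sphere. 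So each listed union is a triangulated closed $3$-manifold, and being two solid tori glued along $\mathcal{T}$ it is $S^{\hspace{.2mm}3}$, a lens space, or $S^{\hspace{.2mm}2}\times S^1$.

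\emph{Step 2 (fundamental groups).} Write $\pi_1(\mathcal{T},0)=\langle\alpha_1,\alpha_2\rangle\cong\ZZ\oplus\ZZ$ as in Remark~\ref{R1}. In this basis the meridian killed inside each solid torus is $\alpha_1$, $\alpha_2$, $\alpha_3=\alpha_1+\alpha_2$ (Remark~\ref{R1}), $\alpha_4=3\alpha_1+\alpha_2$, $\alpha_5=\alpha_1-2\alpha_2$, $\alpha_6=2\alpha_1+3\alpha_2$ (Lemmas~\ref{LT4n}, \ref{LT56n}), $\alpha_7=2\alpha_1+\alpha_2$, $\alpha_8=\alpha_1-\alpha_2$ and $\alpha_9=\alpha_1+2\alpha_2$ (Lemmas~\ref{LT7n}, \ref{LT89n}). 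Since $T_{a,m}$ is a subdivision of $T_a$ killing the same $\alpha_a$ for every $m$, the Seifert--Van Kampen theorem gives $\pi_1(T_a\cup T_{b,n})=\pi_1(T_{a,m}\cup T_{b,n})=\langle\alpha_1,\alpha_2;\ \alpha_a,\alpha_b\rangle$ for all $m,n\geq0$; this is $\ZZ^2$ modulo the subgroup generated by $\alpha_a,\alpha_b$, hence trivial, $\ZZ_p$, or $\ZZ$ according as $\det(\alpha_a,\alpha_b)$ equals $\pm1$, $\pm p$, or $0$.

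\emph{Step 3 ($g$-reduction and identification).} Because $g_{\ast}(\alpha_1)=\alpha_2$, $g_{\ast}(\alpha_2)=-\alpha_1-\alpha_2$, and $g$ sends $T_{a,m}$ to $T_{a',2m}$ with $a\mapsto a'$ running through the cycles $1\to2\to3$, $4\to5\to6$, $7\to8\to9$, each triple listed in (i)--(v) is a single $g$-orbit, so it suffices to compute one representative per orbit and transport by $g$. For (i) the representatives $T_1\cup T_{7,n}$, $T_1\cup T_{8,n}$ and $T_{4,m}\cup T_{7,n}$ give $\langle\alpha_1,2\alpha_1+\alpha_2\rangle=\langle\alpha_1,\alpha_1-\alpha_2\rangle=\langle3\alpha_1+\alpha_2,2\alpha_1+\alpha_2\rangle=\ZZ^2$, hence trivial $\pi_1$ and $S^{\hspace{.2mm}3}$; for (ii), $T_1\cup T_{9,n}$ gives $\ZZ_2$, so $\mathbb{RP}^{\hspace{.2mm}3}$; for (iii), $T_{4,m}\cup T_{8,n}$ gives $\ZZ_4$; for (iv), $T_{4,m}\cup T_{9,n}$ gives $\ZZ_5$; for (v), $T_{7,m}\cup T_{8,n}$ gives $\ZZ_3$. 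Part~(vi), where both meridians equal the primitive class $\alpha_j$, gives $\pi_1\cong\ZZ$ and is handled exactly as in Remark~\ref{R2}, so $T_{j,m}\cup T_{j,n}$ triangulates $S^{\hspace{.2mm}2}\times S^1$.

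\emph{Main obstacle.} The fundamental group fixes only $p=|\det(\alpha_a,\alpha_b)|$, not the full lens space. For $p=2,3,4$ there is a unique lens space up to homeomorphism, so (ii), (iii) and (v) are identified at once as $L(2,1)=\mathbb{RP}^{\hspace{.2mm}3}$, $L(3,1)$ and $L(4,1)$. The one delicate case is (iv), where $L(5,1)\not\cong L(5,2)$ although both have $\pi_1\cong\ZZ_5$; here I would read off the gluing coefficient by choosing a longitude $\lambda$ of $T_4$ with $\det(\alpha_4,\lambda)=1$, say $\lambda=-\alpha_1$, and writing the meridian of the second torus as $\alpha_9=q\,\alpha_4+p\,\lambda=2\alpha_4+5\lambda$. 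This exhibits the gluing as $L(5,2)$ and, using $L(5,2)=L(5,3)$, rules out $L(5,1)$. The same one-line change-of-basis computation, together with the normalizations $L(p,q)=L(-p,-q)=\overline{L(p,-q)}$, confirms the stated $q$ in every lens-space case.
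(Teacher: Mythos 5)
Your proposal is correct and follows essentially the same route as the paper's proof: the vertex-link argument showing each union is a closed triangulated $3$-manifold, the Seifert--Van Kampen computation of $\pi_1(\mathcal{T},0)/\langle\alpha_a,\alpha_b\rangle$ from the meridian classes supplied by Remark \ref{R1} and Lemmas \ref{LT4n}, \ref{LT56n}, \ref{LT7n}, \ref{LT89n}, and transport of one representative per triple via the isomorphisms $g$ and $g^2$. Your explicit handling of the lens-space identification (uniqueness of $L(p,q)$ for $p\leq 4$, and the change-of-basis computation $\alpha_9=2\alpha_4+5\lambda$ pinning down $L(5,2)$ rather than $L(5,1)$) supplies detail that the paper's proof asserts without elaboration, but it does not change the approach.
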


\begin{proof}
By similar arguments as in the proof of Corollary \ref{CT4n}, all the simplicial complexes $T_{i} \cup T_{j,n}$,
$T_{k,m} \cup T_{\ell,n}$ in the statement are triangulated 3-manifolds without boundary.

(i), (ii) and (vi) follow by similar arguments as in Corollary \ref{CT4n}.

The elements $\alpha_4, \alpha_7, \alpha_8, \alpha_9 \in \pi_1(\mathcal{T}, 0)$ in Lemmas \ref{LT4n}, \ref{LT7n},
\ref{LT89n} satisfy the following\,:
\begin{eqnarray}
3\alpha_1 + \alpha_2 = \alpha_4 = 0 &&
\mbox{in } \, \pi_1(T_{4,m}, 0)  \, \mbox{ for all } \, m\geq 0, \nonumber \\
2\alpha_1 + \alpha_2 = \alpha_7 = 0 &&
\mbox{in } \, \pi_1(T_{7,m}, 0)  \, \mbox{ for all } \, m\geq 0, \nonumber \\
\alpha_1 - \alpha_2 = \alpha_8 = 0 &&
\mbox{in } \, \pi_1(T_{8,m}, 0)  \, \mbox{ for all } \, m\geq 0, \nonumber \\
\alpha_1 + 2\alpha_2 =  \alpha_9 = 0 &&
\mbox{in } \, \pi_1(T_{9,m}, 0)  \, \mbox{ for all } \, m\geq 0.
\end{eqnarray}
Here $\alpha_1, \alpha_2$ are as in Remark \ref{R1}. Then, by Seifert-Van Kampen theorem, we have
\begin{eqnarray}
\pi_1(T_{4,m} \cup T_{8,n}, 0)  &=&
\langle\{\alpha_1, \alpha_2\}; \, \{3\alpha_1+\alpha_2,
\alpha_1-\alpha_2\}\rangle  \nonumber \\
&=& \langle\{\alpha_1, \alpha_8\}; \{4\alpha_1-\alpha_8,
\alpha_8\}\rangle \cong\ZZ_4, \label{T48mn} \\
\pi_1(T_{4,m} \cup T_{9,n}, 0)  &=&
\langle\{\alpha_1, \alpha_2\}; \, \{3\alpha_1+\alpha_2,
\alpha_1+2\alpha_2\}\rangle  \nonumber \\
&=& \langle\{\alpha_1, \alpha_4\}; \{4\alpha_1, 5\alpha_1
-2\alpha_4\}\rangle \cong\ZZ_5, \label{T49mn} \\
\pi_1(T_{7,m} \cup T_{8,n}, 0)
&=& \langle\{\alpha_1, \alpha_2\}; \, \{2\alpha_1+\alpha_2,
\alpha_1-\alpha_2\}\rangle \nonumber \\
&=& \langle\{\alpha_1, \alpha_7\}; \, \{\alpha_7,
3\alpha_1-\alpha_7\} \rangle \cong \ZZ_3. \label{T78mn}
\end{eqnarray}

By \ref{T48mn}, $T_{4,m} \cup T_{8,n}$ triangulates the lens space $L(4,1)$ for $m, n\geq 0$. (iii) follows from
this since $g^2\colon T_{5,m} \cup T_{9,n} \to T_{4, 4m} \cup T_{8,4n}$, $g\colon T_{6,m} \cup T_{7,n} \to T_{4,
2m} \cup T_{8,2n}$ are isomorphisms.

By \ref{T49mn}, $T_{4,m} \cup T_{9,n}$ triangulates the lens space $L(5,2)$ for $m, n\geq 0$. (iv) follows from
this since $g^2\colon T_{5,m} \cup T_{7,n} \to T_{4, 4m} \cup T_{9,4n}$, $g\colon T_{6,m} \cup T_{8,n} \to T_{4,
2m} \cup T_{9,2n}$ are isomorphisms.

By \ref{T78mn}, $T_{7,m} \cup T_{8,n}$ triangulates the lens space $L(3,1)$ for $m, n\geq 0$. (v) follows from
this since $g^2\colon T_{8,m} \cup T_{9,n} \to T_{7, 4m} \cup T_{8,4n}$, $g\colon T_{7,m} \cup T_{9,n} \to T_{8,
2m} \cup T_{7,2n}$ are isomorphisms.
\end{proof}
In the rest of this paper, we denote the vertex set of $ \mathcal{T}$, $T_i$, $T_{j, m}$ by
$ V(\mathcal{T})$, $V(T_i)$, $V(T_{j, m})$ respectively.

\section{Construction of equilibrium triangulations}\label{qmtri}

In \cite{BR}, Buchstaber and Ray showed that quasitoric manifolds have smooth structures. So, by results of Cairns
in \cite{Ca}, any quasitoric manifold has a triangulation. But no explicit triangulations of a $4$-dimensional
quasitoric manifold except $\mathbb{\CP}^2$ and $S^2 \times S^2$ are known until now. Let $M^4$ be a quasitoric 4-manifold
over an $m$-gonal $2$-polytope $P$. By Proposition \ref{cla4}, $M^4$ is homeomorphic to $k_1 \mathbb{CP}^2 \# k_2
\bar{\mathbb{CP}^2} \# k_3 (S^2 \times S^2)$ for some $k_1, k_2, k_3 \geq 0$ where the connected sum is
non-equivariant. From Orlik and Raymond \cite{OR} we have
\begin{align*}
k_1+k_2+2k_3+2=m.
\end{align*}
From Proposition \ref{sigma123}, we know that $k_1 \mathbb{CP}^2 \# k_2 \bar{\mathbb{CP}^2} \# k_3 (S^2 \times
S^2)$ has a triangulation whose number of vertices is $4k_1+4k_2+6k_3 +5\leq 4(k_1+k_2+2k_3) +5 \leq
7(k_1+k_2+2k_3) + 2 = 7m-12$. Here, we are interested to construct triangulations of $M^4$ as quasitoric
manifold. 

Generalizing the definitions in \cite{BK}, we introduce equilibrium set, zones of influence and
equilibrium triangulations for all quasitoric manifolds. 
Let 
$$ 
D^{2n}= \{(z_1, \ldots, z_{n}) \in \CC^{n} : \Sigma_{i=1}^{n} |z_i|^2 \leqslant 1\}
$$ 
be the
closed unit $2n$-ball. The boundary of $D^{2n}$ is the unit sphere $S^{2n-1}$. There is a natural
action of $ \mathbb{T}^n$ on $D^{2n}$ (resp., on $S^{2n-1}$) given by 
$$
(t_1, \dots, t_n)\cdot(w_1, \dots, w_n) = (t_1w_1, \dots, t_nw_n),
$$
where $(t_1, \dots, t_n)\in \TT^n$ and $(w_1, \dots, w_n)\in D^{2n}$ (resp., $\in S^{2n-1}$).

\begin{defn}
{\rm Let $\pi : M \to P$ be a quasitoric $2n$-manifold over a simple $n$-polytope $P$.
If $a$ is the center of mass of $P$, then $\pi^{-1}(a)$ is said to be 
the {\em equilibrium set} of $M$. So, the equilibrium set is the 
$\TT^n$-orbit of a point $x \in \pi^{-1}(a)$ and homeomorphic to the torus $\TT^n$.}
\end{defn}

\begin{defn}
{\rm Let $\pi : M \to P$ be a quasitoric $2n$-manifold. A collection $\{Z_1, \dots, Z_m\}$
of submanifolds of $M$ is said to be  a set of {\em zones of influence} if (i) $M = Z_1 \cup \dots \cup Z_m$,
(ii) $Z_i\cap Z_j = (\partial Z_i)\cap (\partial Z_j)$ for $i\neq j$, (iii) each 
$Z_i$ contains the equilibrium set, (iv) each $\pi(Z_i)$ contains exactly one vertex of $P$, and
(v) each $Z_i$ is $\TT^n$-invariant and is $\TT^n$-equivariantly diffeomorphic to the closed unit 
ball $D^{2n}$. We also say that $Z_1\cup\cdots\cup Z_m$ is a {\em decomposition of $M$ into $m$ zones of influence}. }
\end{defn}

\begin{defn}
{\rm Let $X$ be a triangulation of a quasitoric $2n$-manifold $M$. Assume that $f : |X| \to M$ is a
homeomorphism. The triangulation $X$ is said to be an {\em equilibrium triangulation} if
$X= X_1 \cup\cdots\cup X_m$ for some subcomplexes   $X_1, \dots, X_m$ of $X$ such that
$\{f(|X_1|), \dots, f(|X_m|)\}$ is a set of zones of influence. So, an equilibrium triangulation
of $M$ is a triangulation which triangulates $M$ together with a set of zones of influence of $M$. }
\end{defn}


Following the Lecture series \cite{BP} of Buchstaber and Panov, we discuss the rectangular subdivision of
$2$-polytopes. Let $P \subset \RR^2$ be a $2$-polytope with $m$ vertices $V_1, V_2, \ldots, V_m$ and $m$ edges
$E_1=V_1V_2, \ldots, E_m =V_mV_1$. We will denote such $P$ by $V_1V_2 \cdots V_mV_1$. Let $O$ be an interior point of $P$. 
(For our purpose, we take $O$ is the center of mass of $P$.) 
For $1\leq i\leq m$, let $C_{i}$ be an interior point of the edge $E_i$.
 Let $I_{i}$ be the rectangle with
vertices $V_i, C_{i}, O, C_{{i-1}}$. Observe that $P = \cup_{i=1}^m I_{i}$, $I_{i} \cap I_{i+1} = C_{i}O$ and
$I_{i} \cap I_{j} = O$ if $|i-j| \geq 2$, $1 \leq i, j \leq m$. So, we have constructed a rectangular subdivision
of $P$ with $m$ rectangles. We denote  this cell complex by $C(P)$.
\begin{figure}[ht]
\centerline{ \scalebox{0.60}{\input{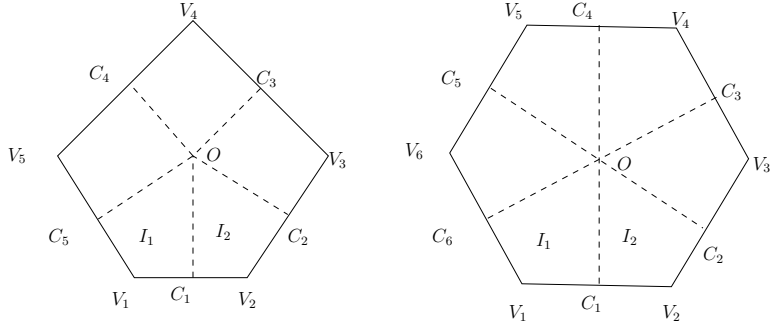} } }
\caption {Examples of rectangular subdivision.}
\label{egch005c}
\end{figure}

 Let $\pi : M(P, \xi) \to P$ be a quasitoric 4-manifold over a 2-polytope $P$ with
the characteristic function 
$$
\xi : \{E_1, \ldots, E_m\} \to \ZZ^2.
$$ 
Denote $\xi(E_i)$ by $(\xi_{i_1}, \xi_{i_2})
\in \ZZ^2$ for $i = 1, \ldots, m$. Let $C(P)$ be a cubical subdivision of $P$ as describe above.
Then, $\pi^{-1}(I_{i})$ is equivariantly homeomorphic to the $4$-ball $D^4$. The boundary of
$\pi^{-1}(I_{i})$ is $\pi^{-1}(C_{i}OC_{{i-1}})$ which is equivariantly homeomorphic to $S^3$. The set
$$
\pi^{-1}(I_{i}) \cap \pi^{-1}(I_{i+1})= \pi^{-1}(I_{i} \cap I_{i+1}) = \pi^{-1}(C_{i}O)
$$ 
is equivariantly
homeomorphic to the solid torus $S^1 \times D^2$. If $|i-j| \geqslant 2$, then the set $$\pi^{-1}(I_{i}) \cap
\pi^{-1}(I_{j})= \pi^{-1}(I_{i} \cap I_{j}) = \pi^{-1}(O)$$ is the torus $\TT^2$ and the set $\pi^{-1}(C_{i}
OC_{j})$ is equivariantly homeomorphic to the lens space $L(p_{ij},q_{ij})$, where
\begin{align*}
p_{ij}= \det ((\xi_{j_1},\xi_{j_2}), (\xi_{i_1}, \xi_{i_2}))~ \mbox{ and }~ q_{ij}= \det ((r, s), (\xi_{j_1},
\xi_{j_2}))
\end{align*}
for some $(r,s) \in \ZZ^2$ with $\det ((r, s), (\xi_{i_1}, \xi_{i_2})) = 1$. Here $q_{ij}$ is independent of the
choice of $(r,s)$ (cf. \cite{OR}). 

From the construction of quasitoric
manifolds it is clear that if $p_{ij}=0$, then $\pi^{-1}(C_{i}OC_{j})= S^1 \times S^2$ and if $|p_{ij}|=1$ then
$\pi^{-1}(C_{i}OC_{j}) = S^3$. Note that $\pi^{-1}(C_{i}OC_{j})$ is the manifold obtained by gluing the solid
tori $S^1 \times D^2$ and $S^1 \times D^2$ via the map $\varphi_{(p_{ij}, q_{ij})}$ on the boundary. So, we
construct a handle body decomposition of the manifold $M(P, \xi)$ using the characteristic function and the
cubical subdivision of $P$, namely, $M(P, \xi) = \cup_1^m \pi^{-1}(I_{i})$. We call this decomposition a cubical
subdivision of $M(P, \xi)$.


A 10-vertex equilibrium triangulation of $\mathbb{CP}^2$ is discussed in \cite{BK}. With the help of  
rectangular subdivisions of a 2-polytope $P$, we construct some equilibrium triangulations of $M(P, \xi)$ 
in the remaining sections. 


\section{Equilibrium triangulations of Hirzebruch surfaces:}\label{m4}

Let $\pi : M \to P$ be a 4-dimensional quasitoric manifold over a rectangle $P=V_1V_2V_3V_4V_1$
and $\xi: \{V_1V_2, V_2V_3, V_3V_4, V_1V_4\} \to \ZZ^2$ be a characteristic function on $P$.
By the definition of characteristic function, we may assume $\xi(V_1V_2)=(-1, 0)$ and $\xi(V_1V_4)=(0, -1)$. So, $\xi(V_2V_3)=(l, 1)$
and $\xi(V_3V_4)=(1, k)$ (up to sign) for some $\ell, k \in \ZZ$.
Then, $k\ell-1=\pm 1$. If $k\ell-1=-1$, then $k\ell=0$. So, either $k=0$ or $\ell=0$. On the other hand, if $k\ell-1=1$, then
$k\ell=2$. So, either $k=\pm 1$ and $\ell=\pm 2$ or $k=\pm 2$ and $\ell=\pm 1$. In this section, we consider the rectangular
subdivision of $P$ as in Figure \ref{egch002b}. When $\ell=0$, we denote $M$ by $M_k$. 
For the characteristic function of
Figure \ref{egch002b}, we have 
\begin{align*}
\pi^{-1}(C_1OC_3) &=
L(-k, 1)= \bar{L(k, 1)} \cong L(k, 1), \\
\pi^{-1}(C_1OC_2) &\cong \pi^{-1}(C_2OC_3) \cong \pi^{-1}(C_3OC_4)
 \cong \pi^{-1}(C_1OC_4) \cong S^3, \mbox{ and} \\
\pi^{-1}(C_2OC_4) &= L(0,1) \cong S^2 \times S^1.
\end{align*}

\begin{figure}[ht]
\centerline{\scalebox{0.90}{\input{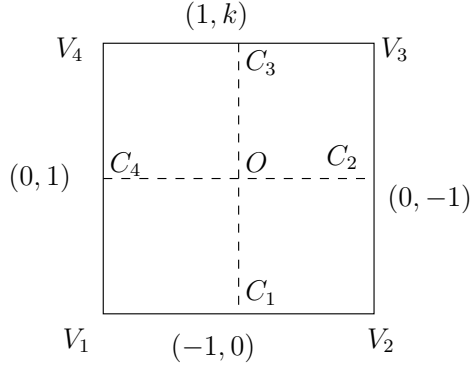} } }
\caption {Characteristic functions of rectangle.}
\label{egch002b}
\end{figure}


 We recall the number of vertices of solid tori of Section \ref{tortri}.
 By Example \ref{tst1}, $f_0(T_i) = f_0(\mathcal{T})=7$ if $1 \leq i \leq 3$.
By Example \ref{ETjn}, $f_0(T_{j,n})= f_0(\mathcal{T})+2$ if $1 \leq j \leq 3, 0 \leq n \leq 6$ and
$f_0(T_{j,n})= f_0(\mathcal{T})+3$ if $1 \leq j \leq 3, n \geq 7$. By Example \ref{ET56n}, $f_0(T_{j,n})=f_0(\mathcal{T}) +10$ 
if $4 \leq j \leq 6, n \geq 0$. By Example \ref{ET89n}, $f_0(T_{j,n}) = f_0(\mathcal{T}) +6$ if $7 \leq j \leq 9, n \geq 0$.

By our assumption for any $k \in \ZZ$, $M_k$ is a smooth projective surface, called {\em Hirzebruch surface},
see Example \ref{square}.
In this subsection, we construct equilibrium triangulation of Hirzebruch surface $M_k$ for $|k| \leq 4$. 

\begin{eg}\label{egr1}
{\rm For $k= 1$, $L(-k, 1) \cong S^3$. Consider triangulations $T_1$ for $\pi^{-1}(C_1O)$, $T_{2}$ for
$\pi^{-1}(C_2 O)$, $T_{3}$ for $\pi^{-1}(C_3O)$ and $T_{2, 7}$ for $\pi^{-1}(C_4O)$. Then, by Lemma \ref{LT123n},
$T_1 \cup T_2$, $T_2 \cup T_3$, $T_3 \cup T_{2, 7}$, $T_{2, 7} \cup T_1$, $T_1 \cup T_3$ and $T_2 \cup T_{2, 7}$
triangulate $\pi^{-1}(C_1OC_2)$, $\pi^{-1}(C_2OC_3)$, $\pi^{-1}(C_3OC_4)$, $\pi^{-1}(C_1OC_4)$, $\pi^{-1}(C_1OC_3)$ and
$\pi^{-1}(C_2OC_4)$ respectively. Consider the cone $\pi^{-1}(I_{i})$ over $\pi^{-1}(C_{i-1}OC_{i})$ with apex $V_i$ for
$1 \leq i \leq 4$ (addition is modulo $4$). This implies
$$ 
X_{\ref{egr1}}= (V_2 \ast (T_1 \cup T_2)) \cup (V_3 \ast (T_2 \cup T_3)) \cup (V_4 \ast (T_3 \cup T_{2,7})) \cup (V_1 \ast
(T_{2, 7} \cup T_1)) 
$$
is a triangulation of $M_1$
with $f_0(X_{\ref{egr1}}) = 7+3+4 =14$. Since the minimum
number of vertices require for a triangulation of $S^1 \times S^2$ is $10$ (cf. \cite{BD}), any triangulation of
$\pi^{-1}(C_2OC_4)$ needs $10$ vertices. This implies that $X_{\ref{egr1}}$ is a vertex minimal equilibrium triangulation of
$M_1$. The case $k=-1$ is identical.}
\end{eg}


\begin{eg}\label{egr2}
{\rm For $k= 2$, $L(-k, 1) \cong \mathbb{RP}^3$. Consider triangulations $T_{2}$ for $\pi^{-1}(C_1O)$,
$T_{1}$ for $\pi^{-1}(C_2O)$, $T_{7, 0}$ for $\pi^{-1}(C_3O)$ and $T_{1, 7}$ for $\pi^{-1}(C_4O)$. Then, by Lemma
\ref{LT123n} and Corollary \ref{CT7n}, $T_1 \cup T_2$, $T_1 \cup T_{7, 0}$, $T_{7, 0} \cup T_{1, 7}$,
$T_{2} \cup T_{1, 7}$, $T_2 \cup T_{7, 0}$ and $T_1 \cup T_{1, 7}$ triangulate $\pi^{-1}(C_1OC_2)$,
$\pi^{-1}(C_2OC_3)$, $\pi^{-1}(C_3OC_4)$, $\pi^{-1}(C_1OC_4)$, $\pi^{-1}(C_1OC_3)$ and $\pi^{-1}(C_2OC_4)$ respectively.
Consider the cone $\pi^{-1}(I_i)$ over $\pi^{-1}(C_{i-1}OC_{i})$ with apex $V_i$ for $i= 1, 2, 3, 4$
(addition is modulo 4). This gives
$$ 
X_{\ref{egr2}}= (V_2 \ast (T_1 \cup T_2)) \cup (V_3 \ast (T_1 \cup T_{7, 0})) \cup (V_4 \ast (T_{7, 0} \cup T_{1,7}))
\cup (V_1 \ast (T_{1, 7} \cup T_2)) 
$$
a triangulation of $M_2$ with $f_0(X_{\ref{egr2}})=7+6+3+4 = 20$. The case $k=-2$ is identical.}
\end{eg}


\begin{eg}\label{egr3}
{\rm For $k= 3$, $L(-k, 1) \cong L(3,1)$. Consider triangulations $T_2$ for $\pi^{-1}(C_1O)$, $T_{1}$ for
$\pi^{-1}(C_2O)$, $T_{4, 0}$ for $\pi^{-1}(C_3O)$ and $T_{1,7}$ for $\pi^{-1}(C_4O)$. Then, by Remark \ref{R1},
Lemma \ref{LT123n} and Corollary \ref{CT4n}, we get that $T_1 \cup T_2$, $T_1 \cup T_{4, 0}$, $T_{4, 0} \cup T_{1, 7}$,
$T_2 \cup T_{1, 7}$, $T_{2} \cup T_{4, 0}$ and $T_1 \cup T_{1, 7}$ triangulate $\pi^{-1}(C_1OC_2)$,
$\pi^{-1}(C_2OC_3)$, $\pi^{-1}(C_3OC_4)$, $\pi^{-1}(C_1OC_4)$, $\pi^{-1}(C_1OC_3)$ and $\pi^{-1}(C_2OC_4)$
respectively. Now, consider the cone $\pi^{-1}(I_i)$ over $\pi^{-1}(C_{i-1}OC_{i})$ with apex $V_i$ for $i= 1, 2, 3, 4$
(addition is modulo 4). This gives
$$ 
X_{\ref{egr3}}= (V_2 \ast (T_1 \cup T_2)) \cup (V_3 \ast (T_1 \cup T_{4, 0})) \cup (V_4 \ast (T_{4, 0} \cup T_{1,7}))
\cup (V_1 \ast (T_{1, 7} \cup T_2))
$$
is triangulation of $M_3$ with $f_0(X_{\ref{egr3}})=7+3+10+4 = 24$. The case $k=-3$ is identical.}
\end{eg}


\begin{eg}\label{egr4}
{\rm For $k= 4$, $L(-k, 1) \cong L(4,1)$. Consider triangulations $T_{4,0}$ for $\pi^{-1}(C_1O)$, $T_{1}$
for $\pi^{-1}(C_2O)$, $T_{8,0}$ for $\pi^{-1}(C_3O)$ and $T_{1,7}$ for $\pi^{-1}(C_4O)$. Then, by Lemma
\ref{LT123n} and Corollaries \ref{CT4n}, \ref{CT7n}, we get that $T_1 \cup T_{4, 0}$,
$T_{1} \cup T_{8, 0}$, $T_{8, 0} \cup T_{1, 7}$, $T_{4, 0} \cup T_{8, 0}$ and $T_1 \cup T_{1, 7}$
triangulate $\pi^{-1}(C_1OC_2)$, $\pi^{-1}(C_2OC_3)$, $\pi^{-1}(C_3OC_4)$, $\pi^{-1}(C_1OC_4)$,
$\pi^{-1}(C_1OC_3)$ and $\pi^{-1}(C_2OC_4)$ respectively. Now, consider the cone $\pi^{-1}(I_i)$ over
$\pi^{-1}(C_{i-1}OC_{i})$ with apex $V_i$ for $i= 1, 2, 3, 4$ (addition is modulo 4). This gives
$$ 
X_{\ref{egr4}}= (V_2 \ast (T_1 \cup T_{4,0})) \cup (V_3 \ast (T_1 \cup T_{8, 0})) \cup (V_4 \ast (T_{8, 0} \cup T_{1,7}))
\cup (V_1 \ast (T_{1, 7} \cup T_{4,0}))
$$
is an equilibrium triangulation of $M_4$
with $f_0(X_{\ref{egr4}}) = 7+10+6+3+4 = 30$. The case $k=-4$ is identical.}
\end{eg}

\begin{eg}\label{egr5}
{\rm If $k=0$, then $L(-k, 1) \cong S^1 \times S^2$ and $M_0$ is equivariantly homeomorphic to $S^2 \times S^2$.
Consider triangulations
$T_1$ for $\pi^{-1}(C_1O)$, $T_2$ for $\pi^{-1}(C_2O)$, $T_{1, 7}$ for $\pi^{-1}(C_3O)$ and $T_{2, 7}$
for $\pi^{-1}(C_4O)$. Then, by Remark \ref{R1} and Lemma \ref{LT123n}, we get that $T_1 \cup T_2$,
$T_2 \cup T_{1,7}$, $T_{1, 7} \cup T_{2, 7}$, $T_{1} \cup T_{2, 7}$, $T_1 \cup T_{1, 7}$ and $T_2 \cup T_{2, 7}$
triangulate $\pi^{-1}(C_1OC_2)$, $\pi^{-1}(C_2OC_3)$, $\pi^{-1}(C_3OC_4)$, $\pi^{-1}(C_1OC_4)$,
$\pi^{-1}(C_1OC_3)$ and $\pi^{-1}(C_2OC_4)$ respectively. Now consider the cone $\pi^{-1}(I_i)$ over
$\pi^{-1}(C_{i-1}OC_{i})$ with apex $V_i$ for $i= 1, 2, 3, 4$ (addition is modulo 4). This gives that
$$ 
X_{\ref{egr5}}= (V_2 \ast (T_1 \cup T_{2})) \cup (V_3 \ast (T_2 \cup T_{1, 7})) \cup (V_4 \ast (T_{1, 7} \cup T_{2, 7}))
\cup (V_1 \ast (T_{1} \cup T_{2, 7}))
$$
is an equilibrium triangulation of $M_0$ with $f_0(X_{\ref{egr5}})=7+3+3+4 = 17$.}
\end{eg}

\begin{qn}
{\rm Find a vertex minimal equilibrium triangulation of the Hirzebruch surface $M_k$ when $|k| = 2, 3, 4$.}
\end{qn}

\begin{qn}
{\rm Find an equilibrium triangulation of the Hirzebruch surface $M_k$ when $|k| \geq 5$.}
\end{qn}

\section{Equilibrium triangulations of quasitoric 4-manifolds over pentagons:}\label{m5}

Let $\pi : M \to P$ be a 4-dimensional quasitoric manifold over a pentagon $P=V_1 \cdots V_5V_1$
and $\xi: \{V_1V_2, \ldots, V_4V_5, V_1V_5\} \to \ZZ^2$ be a characteristic function on $P$.
By the definition of characteristic function, we may assume $\xi(V_1V_2)=(-1, 0)$ and
$\xi(V_2V_3)=(0, -1)$. So, $\xi(V_3V_4)=(1, k)$ and $\xi(V_1V_5)=(\ell, 1)$ (up to sign) for some $\ell, k \in \ZZ$.
Let $\xi(V_4V_5) =(a, b)$. Then, $a-\ell b=\pm 1$ and $ak-b=\pm 1$. For computational purpose,
we assume that
\begin{equation}\label{eqp}
 a-b\ell= 1 \mbox{ and } b-ak= 1.
\end{equation}

 Thus, $a$ and $b$ are rational functions of $k, \ell$ if
$k\ell \neq 1$. In this case we denote $M$ by $M_{k,\ell}$.

If $k\ell=1$, then $(k,\ell)=(1,1)$ or $(k, \ell)=(-1, -1)$. Since $a-b \ell =1= b- ak$, $(k, \ell) \neq (1, 1)$.
Thus $(k, \ell) = (-1, -1)$. In this case, $M$ is denoted by $M_{11b}$.

\begin{lemma}
There is an $(a, b) \in \ZZ^2$ such that $a-\ell b= 1$ and $b-ak= 1$ if and only if $k=-1, 0$ or $\ell=-1, 0$ or $(k,\ell) \in
\{(3,1), (2,1), (2,2), (1,2), (1,3)\}$.
\end{lemma}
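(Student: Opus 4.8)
The plan is to view the two conditions as a linear system in the unknowns $(a,b)$ and to reduce the existence of an integral solution to a pair of divisibility conditions. Writing the equations $a-\ell b=1$ and $b-ak=1$ of \eqref{eqp} as
\[
\begin{pmatrix} 1 & -\ell \\ -k & 1\end{pmatrix}\begin{pmatrix} a\\ b\end{pmatrix}=\begin{pmatrix} 1\\ 1\end{pmatrix},
\]
the determinant of the coefficient matrix is $D:=1-k\ell$. First I would dispose of the degenerate case $D=0$, i.e.\ $k\ell=1$: here $(k,\ell)=(1,1)$ forces $a-b=1$ and $a-b=-1$ simultaneously, so no solution exists, while $(k,\ell)=(-1,-1)$ reduces both equations to $a+b=1$ and thus has (infinitely many) integral solutions. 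The latter pair is already subsumed by the clause $\ell=-1$ (equivalently $k=-1$) of the statement.

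When $D\neq 0$, Cramer's rule gives the unique rational solution $a=(1+\ell)/D$ and $b=(1+k)/D$, so an integral $(a,b)$ exists if and only if $D\mid (1+\ell)$ and $D\mid (1+k)$. The boundary values are then immediate: if $k=-1$ (with $\ell\neq -1$) then $1+k=0$ and $D=1+\ell$, so both divisibilities hold; if $k=0$ then $D=1$ divides everything; and symmetrically for $\ell=-1$ and $\ell=0$. This settles the ``if'' direction for the first four families, and the explicit solutions (for instance $b=0,\ a=1$ when $k=-1$; $b=1,\ a=1+\ell$ when $k=0$) can be recorded for use in the triangulation.

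The substantive part is to show that for $k,\ell\notin\{-1,0\}$ the only solutions are the five listed pairs. Here $1+k\neq 0$ and $1+\ell\neq 0$, so divisibility forces the necessary bound $|D|=|1-k\ell|\leq \min(|1+k|,|1+\ell|)$, which turns the problem into a finite search. I would run a short sign analysis on this bound: if $k$ and $\ell$ have opposite signs, the inequality becomes $1+k|\ell|\leq 1+k$, forcing $|\ell|\leq 1$; and if both are $\leq -2$ it becomes $k\ell-1\leq |k|-1$, again forcing $|\ell|\leq 1$. Both contradict $|\ell|\geq 2$, so no solution arises, leaving only $k,\ell\geq 1$. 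In that range the two bounds read $k(\ell-1)\leq 2$ and $\ell(k-1)\leq 2$, whose integer solutions are exactly $(1,1),(1,2),(1,3),(2,1),(3,1),(2,2)$; discarding the degenerate $(1,1)$ and verifying the actual divisibility for the rest (e.g.\ $(2,2)$ gives $D=-3$ dividing $1+k=3=1+\ell$) yields precisely $\{(3,1),(2,1),(2,2),(1,2),(1,3)\}$.

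The main obstacle is bookkeeping rather than depth: one must cleanly separate the degenerate determinant case and the boundary cases $k,\ell\in\{-1,0\}$ (where $1+k$ or $1+\ell$ vanishes and the bound argument does not apply) from the generic finite search, and one must remember that the necessary bound $|D|\leq |1+k|$ is not by itself sufficient, so the exact divisibility has to be checked for each of the finitely many surviving pairs.
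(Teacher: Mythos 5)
Your proof is correct, and in the converse direction it is organized genuinely differently from the paper's. The paper also works, in effect, from the unique rational solution (it writes $a=\frac{\ell+1}{1-k\ell}$), but it then decomposes the complement of the asserted solution set into a union of unbounded regions of $\ZZ^2$ and argues region by region that $a$ is not an integer — explicitly only for the region $k\leq -2$, $\ell\geq 1$, where it shows $0<a<1$, with the remaining regions dismissed by ``similarly'' and the forward direction by ``it is clear'' (the paper's list of regions even contains a slip, $k\leq 4,\ \ell\geq 1$ where $k\geq 4$ is meant). You instead extract from Cramer's rule the clean criterion that, when $D=1-k\ell\neq 0$, an integral solution exists if and only if $D\mid(1+\ell)$ and $D\mid(1+k)$, and you use the resulting bound $|D|\leq\min(|1+k|,|1+\ell|)$ — valid precisely because $k,\ell\notin\{-1,0\}$ makes $1+k$ and $1+\ell$ nonzero — to kill all infinite families with one uniform estimate and reduce to the finite list $(1,1),(1,2),(1,3),(2,1),(2,2),(3,1)$, after which you actually verify the divisibility for the five survivors. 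This buys uniformity and completeness: a single inequality replaces the paper's per-region sandwich estimates, the sufficiency of the five exceptional pairs is checked rather than asserted, and the degenerate determinant $k\ell=1$ is handled transparently (no solution at $(1,1)$, infinitely many at $(-1,-1)$, the latter absorbed by the clause $\ell=-1$), exactly matching the paper's separate treatment of these two pairs. One cosmetic point: in the opposite-sign case your inequality $1+k|\ell|\leq 1+k$ tacitly takes $k$ to be the positive entry; you should say that this is without loss of generality by the symmetry of the system under $(k,a)\leftrightarrow(\ell,b)$, or run the analogous estimate $1+|k|\ell\leq |k|-1$ when $k\leq -2$, $\ell\geq 1$ — either way the contradiction persists, so this is a matter of wording, not a gap.
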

\begin{proof}
 Let $A=\{(k,\ell) : k=-1, 0 \mbox{ or } \ell=-1, 0 \mbox{ or } (k,\ell) \in \{(3,1), (2,1), (2,2), (1,2), (1,3)\}\}$.
 It is clear that if $(k,\ell) \in A$, then $a-\ell b= 1$ and $b-ak= 1$ has a solution in $\ZZ^2$.
 
 For the converse, let $B$ be the compliment of $A$ in $\ZZ^2$. Then
 
 $B=\{(1,1)\} \cup \{(k,\ell) : k \leq -2, \ell \geq 1\} \cup 
 \{(k, \ell) : k \leq -2, \ell \leq -2\} \cup \{(k, \ell) : k \geq 1, \ell \leq -2\} \cup \{(k, \ell) : k \geq 1, \ell \geq 4\}
 \cup \{(k, \ell) : k \geq 2, \ell \geq 3\} \cup \{(k, \ell) : k \geq 3, \ell \geq 2\} \cup \{(k, \ell) : k \leq 4, \ell \geq 1\}$.
 
 If $(k,\ell)=(1,1)$, $a-\ell b= 1$ and $b-ak= 1$ has no solution in $\ZZ^2$.
Let $k \leq -2, \ell \geq 1$. Then, $k\ell \leq -2\ell$. So, $1-k\ell \geq 2\ell+1 > \ell+1 > 0$. This implies $1 > a=\frac{\ell+1}{1-k\ell} >0$.
Similarly, we can show that for any $(k,\ell) \in B$, $a-\ell b= 1$ and $b-ak= 1$ has no solution in $\ZZ^2$.
 \end{proof}

In this section, we consider the rectangular subdivision of the pentagon as in Figure \ref{egch006}. For the characteristic
function of Figure \ref{egch006}, we have
\begin{align*}
&\pi^{-1}(C_iOC_{i+1})  \cong  S^3 \cong  \pi^{-1}(C_1OC_5) \mbox{ for } 1\leq  i  \leq 4, \\
&\pi^{-1}(C_1O_PC_3)  = L(k, 1), ~~ \pi^{-1}(C_1OC_4)= L(b,-a)= \bar{L(b, a)} \cong L(b, a), \\
&\pi^{-1}(C_2OC_4) =L(a, b), ~~ \pi^{-1}(C_2OC_5)= L(\ell, 1) \mbox{ and }  \pi^{-1}(C_3OC_5)= L(\ell k-1,-\ell).
\end{align*}

\begin{figure}[ht]
\centerline{\scalebox{0.90}{\input{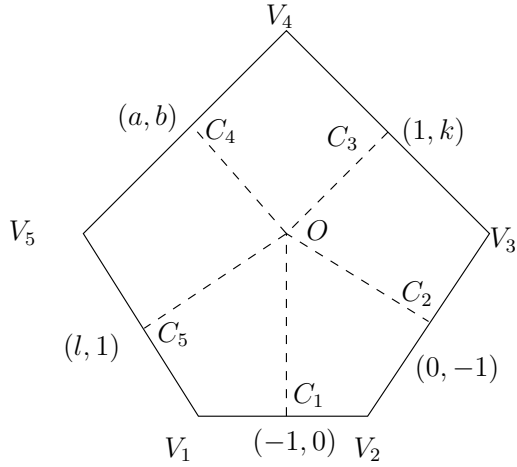} } }
\caption {Characteristic functions of pentagon.}
\label{egch006}
\end{figure}

\begin{lemma}
 The manifold $M_{k,\ell}$ is $\delta$-equivariantly homeomorphic to $M_{\ell,k}$ where $\delta$ is the automorphism
 of $\TT^2$ obtained by flipping the coordinates. Moreover, this homeomorphism induces a bijective correspondence
 between the equilibrium triangulations of the Hirzebruch surfaces $M_{k, \ell}$ and $M_{\ell, k}$.
 \end{lemma}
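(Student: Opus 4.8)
The plan is to build the homeomorphism in two stages. In the first stage I use the elementary fact that for $A \in GL(2,\ZZ)$, with induced torus automorphism $\delta_A([x]) = [Ax]$, the assignment $[t,p] \mapsto [\delta_A(t),p]$ is a well-defined $\delta_A$-equivariant homeomorphism $M(P,\xi) \to M(P, A\xi)$. Indeed, the relation $s^{-1}t \in \TT_F$ (the subgroup spanned by the $\xi$-vectors of the facets through $p$) is equivalent to $\delta_A(s^{-1}t)$ lying in the subgroup spanned by the vectors $A\xi(F_j)$, so the map descends to the quotients, and $\delta_A(gt) = \delta_A(g)\delta_A(t)$ gives equivariance. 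Taking $A = \bigl(\begin{smallmatrix} 0 & 1 \\ 1 & 0 \end{smallmatrix}\bigr)$, the induced $\delta_A$ is exactly the coordinate flip $\delta$, so $M_{k,\ell} = M(P,\xi)$ is $\delta$-equivariantly homeomorphic to $M(P, A\xi)$.

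Next I identify $(P, A\xi)$ with the pair $(P,\eta)$ of $M_{\ell,k}$. Flipping coordinates gives $A\xi(V_1V_2) = (0,-1)$, $A\xi(V_2V_3) = (-1,0)$, $A\xi(V_3V_4) = (k,1)$, $A\xi(V_4V_5) = (b,a)$, $A\xi(V_1V_5) = (1,\ell)$, while $\eta$ is read off Figure \ref{egch006} with $k,\ell$ interchanged, so $\eta(V_3V_4) = (1,\ell)$, $\eta(V_1V_5) = (k,1)$, and $\eta(V_4V_5) = (a',b')$ solves $a'-b'k = 1$, $b'-a'\ell = 1$. The crucial arithmetic step is that $(a',b') = (b,a)$: substituting $a' = b$, $b' = a$ turns these into $b - ak = 1$, $a - b\ell = 1$, which are precisely the equations \eqref{eqp} for $M_{k,\ell}$ (the solution being unique when $k\ell \neq 1$). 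Let $\psi$ be the reflection of the pentagon fixing $V_2$ with $V_1 \leftrightarrow V_3$ and $V_4 \leftrightarrow V_5$; on edges $\psi(V_1V_2) = V_2V_3$, $\psi(V_2V_3) = V_1V_2$, $\psi(V_3V_4) = V_1V_5$, $\psi(V_1V_5) = V_3V_4$, $\psi(V_4V_5) = V_4V_5$. Checking each edge shows $\eta(\psi(E)) = A\xi(E)$ with all signs $+$, so $(P, A\xi)$ and $(P,\eta)$ are equivalent characteristic pairs and Proposition \ref{clema3} yields an equivariant homeomorphism $M(P, A\xi) \to M_{\ell,k}$. Composing with the first stage produces the $\delta$-equivariant homeomorphism $f \colon M_{k,\ell} \to M_{\ell,k}$.

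For the second assertion I track $f$ on the rectangular subdivision. The first stage covers the identity of $P$ and the second stage covers $\psi$, so $f$ covers $\psi$ on orbit spaces, i.e.\ $f(\pi^{-1}(I_i)) = \pi^{-1}(\psi(I_i))$. Choosing $O$ to be the center of mass and the $C_i$ to be edge midpoints, $\psi$ preserves the cell complex $C(P)$, fixes $O$, and permutes the rectangles $I_i$; hence $f$ carries each zone of influence $\pi^{-1}(I_i)$ of $M_{k,\ell}$ onto the corresponding zone of $M_{\ell,k}$. Properties (i)--(iv) of a set of zones of influence are preserved because $f$ is a $\psi$-covering homeomorphism with $\psi(O) = O$; for property (v) I use that $f$ is $\delta$-equivariant with $\delta$ an automorphism (so $\TT^2$-invariance is preserved) and that $(z_1,z_2) \mapsto (z_2,z_1)$ is a $\delta$-equivariant self-diffeomorphism of $D^4$, so a $\delta$-equivariant image of a ball modelled on $D^4$ is again modelled on $D^4$. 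Thus $f$ sends equilibrium triangulations to equilibrium triangulations; since $\delta^2 = \mathrm{id}$, the same applies to $f^{-1}$, giving the claimed bijective correspondence.

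The main obstacle I expect is the simultaneous bookkeeping in the second stage: singling out the one reflection $\psi$ for which all five relations $\eta(\psi(E)) = +A\xi(E)$ hold, which hinges entirely on the identity $(a',b') = (b,a)$ forced by \eqref{eqp}. The degenerate values $k\ell = 1$ (the pair $M_{11b}$) and $k = \ell$ should be noted separately, but there $M_{k,\ell}$ and $M_{\ell,k}$ coincide and the construction produces a $\delta$-equivariant self-homeomorphism. A secondary point of care is ensuring the subdivision data $(O, C_i)$ is chosen $\psi$-symmetrically so that property (v) survives the coordinate twist.
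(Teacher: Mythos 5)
Your proposal is correct and follows essentially the same route as the paper: the paper's proof is exactly the reflection of the pentagon fixing $V_2$ combined with Proposition \ref{clema3} and Equation (\ref{eqp}), and your identity $(a',b')=(b,a)$ is precisely how \eqref{eqp} enters, while your first stage (twisting the characteristic function by $A=\bigl(\begin{smallmatrix}0&1\\1&0\end{smallmatrix}\bigr)$ to reduce $\delta$-equivariance to the equivariant case) makes explicit what the paper leaves implicit in citing Proposition \ref{clema3}. Your verification of the zones of influence, including the $\psi$-symmetric choice of $O$ and the $C_i$, is likewise a careful expansion of the paper's one-sentence argument for the second claim.
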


\begin{proof}
 Consider the reflection of the pentagon which fixes $V_2$. Then, the first claim of the lemma follows
 from Proposition \ref{clema3} and Equation (\ref{eqp}). Since the automorphism $\delta$ of $\TT^2$ interchange
 the coordinates of $\TT^2$, the $\delta$-equivariant homeomorphism preserves the equilibrium set, zones of
 influence and the boundary of zones of influence. Thus, the second claim follows.
\end{proof}

 We recall the number of vertices of solid tori of Section \ref{tortri}.
 By Example \ref{tst1}, $f_0(T_i) = f_0(\mathcal{T})=7$ if $1 \leq i \leq 3$.
By Example \ref{ETjn}, $f_0(T_{j,n})= f_0(\mathcal{T})+2$ if $1 \leq j \leq 3, 0 \leq n \leq 6$ and
$f_0(T_{j,n})= f_0(\mathcal{T})+3$ if $1 \leq j \leq 3, n \geq 7$. By Example \ref{ET56n}, $f_0(T_{j,n})=f_0(\mathcal{T}) +10$ 
if $4 \leq j \leq 6, n \geq 0$. By Example \ref{ET89n}, $f_0(T_{j,n}) = f_0(\mathcal{T}) +6$ if $7 \leq j \leq 9, n \geq 0$.
Under the assumption in the beginning of this section, the following examples are possible complex projective
 surfaces over the pentagon with  $-3 \leq k, \ell \leq 3$ and $-4 \leq a \leq 4$. Note that the corresponding
 characteristic function in the following examples is complete.

\begin{eg}\label{egp4}
 {\rm Let $(k, \ell) =(-1, 0)$. Then, $(a, b)=(1, 0)$.
 Now $L(k, 1)=L(-1, 1) \cong S^3$,
 $L(b, a)=L(0, 1) \cong S^1 \times S^2$, $L(a, b)=L(1, 0) \cong S^3$, $L(\ell, 1)=L(0, 1) \cong S^1 \times S^2$,
 $L(\ell k-1, -\ell)=L(-1, 0) \cong S^3$.
 
 Consider triangulations $T_1$ for $\pi^{-1}(C_1O)$, $T_2$ for $\pi^{-1}(C_2O)$, $T_3$ for $\pi^{-1}(C_3O)$,
 $T_{1,7}$ for $\pi^{-1}(C_4O)$ and $T_{2, 7}$ for $\pi^{-1}(C_5O)$.
 Then, by Remark \ref{R1} and Lemma \ref{LT123n}, we get that
$T_{1} \cup T_2$, $T_2 \cup T_3$, $T_3 \cup T_{1, 7}$, $T_{1,7} \cup T_{2,7}$, $T_{2,7} \cup T_{1}$
triangulate $\pi^{-1}(C_1OC_2)$, $\pi^{-1}(C_2OC_3)$, $\pi^{-1}(C_3OC_4)$, $\pi^{-1}(C_4OC_5)$,
$\pi^{-1}(C_1OC_5)$ respectively and $T_1 \cup T_{3}$, $T_1 \cup T_{1, 7}$, $T_2 \cup T_{1,7}$,
$T_2 \cup T_{2,7}$, $T_3 \cup T_{2,7}$ triangulate $\pi^{-1}(C_1OC_3)$, $\pi^{-1}(C_1OC_4)$,
$\pi^{-1}(C_2OC_4)$, $\pi^{-1}(C_2OC_5)$, $\pi^{-1}(C_3OC_5)$ respectively. This implies that
 $$
X_{\ref{egp4}}=(V_2 \ast (T_1 \cup T_2)) \cup (V_3 \ast (T_2 \cup T_3)) \cup (V_4 \ast (T_3 \cup T_{1, 7}))
\cup (V_5 \ast (T_{1, 7} \cup T_{2, 7})) \cup (V_1 \ast (T_1 \cup T_{2, 7}))
$$
is an equilibrium triangulation of $M_{-1,0}$ with $f_0(X_{\ref{egp4}})=7+3+3+5=18$.

Since the minimum number of vertices require for a triangulation of $S^1 \times S^2$ is $10$ (cf. \cite{BD}),
any triangulation of $\pi^{-1}(C_1OC_4)$ and $\pi^{-1}(C_2OC_5)$ needs $10$ vertices. This implies that $X_{\ref{egp4}}$
is a vertex minimal equilibrium triangulation of $M_{-1,0}$.}
\end{eg}

\begin{eg}\label{egp7}
{\rm Let $(k,\ell)= (1, -1)$. Then, $(a, b)=(0, 1)$.
Now, $L(k, 1)=L(1, 1) \cong S^3$,
$L(b, a) = L(1, 0) \cong S^3$, $L(a, b)= L(0, 1) \cong S^2 \times S^1$, $L(\ell, 1)=L(-1, 1) \cong S^3,
$ $L(k\ell-1, -\ell) = L(-2, 1) \cong \mathbb{RP}^3$.

Consider triangulations $T_{1}$ for $\pi^{-1}(C_1O)$, $T_{3}$ for $\pi^{-1}(C_2O)$,
$T_{2}$ for $\pi^{-1}(C_3O)$, $T_{3, 7}$ for $\pi^{-1}(C_4O)$ and $T_{7, 0}$ for $\pi^{-1}(C_5O)$.
Then, by Remark \ref{R1}, Lemma \ref{LT123n} and Corollary \ref{CT7n}, we get that
$T_{1} \cup T_3$, $T_3 \cup T_2$, $T_2 \cup T_{3,7}$, $T_{3,7} \cup T_{7, 0}$, $T_{7, 0} \cup T_{1}$
triangulate $\pi^{-1}(C_1OC_2)$, $\pi^{-1}(C_2OC_3)$, $\pi^{-1}(C_3OC_4)$, $\pi^{-1}(C_4OC_5)$,
$\pi^{-1}(C_1OC_5)$ respectively and $T_1 \cup T_{2}$, $T_1 \cup T_{3, 7}$, $T_3 \cup T_{3,7}$,
$T_3 \cup T_{7,0}$, $T_2 \cup T_{7,0}$ triangulate $\pi^{-1}(C_1OC_3)$, $\pi^{-1}(C_1OC_4)$,
$\pi^{-1}(C_2OC_4)$, $\pi^{-1}(C_2OC_5)$, $\pi^{-1}(C_3OC_5)$ respectively. This implies that
 $$
X_{\ref{egp7}} = (V_2 \ast (T_1 \cup T_{3})) \cup (V_3 \ast (T_3 \cup T_2)) \cup (V_4 \ast (T_2 \cup T_{3,7}))
\cup (V_5 \ast (T_{3,7} \cup T_{7,0})) \cup (V_1 \ast (T_{7,0} \cup T_{1}))
$$
is an equilibrium triangulation of $M_{1,-1}$ with $f_0(X_{\ref{egp7}})=7+3+6+5=21$.}
\end{eg}

\begin{eg}\label{egp8}
{\rm Let $(k,\ell)= (-2, -1)$. Then, $(a, b)=(0, 1)$. 
Now $L(k, 1)=L(-2, 1) \cong \mathbb{RP}^3$,
$L(b, a) = L(1, 0) \cong S^3$, $L(a, b)= L(0, 1) \cong S^1 \times S^2$, $L(\ell, 1)=L(-1, 1) \cong S^3,$  $L(k\ell-1, -\ell) = L(1, -1) \cong S^3$. 

Consider triangulations $T_2$ for $\pi^{-1}(C_1O)$, $T_{1}$ for $\pi^{-1}(C_2O)$,
$T_{7, 0}$ for $\pi^{-1}(C_3O)$, $T_{1,7}$ for $\pi^{-1}(C_4O)$ and $T_{3}$ for $\pi^{-1}(C_5O)$.
 Similarly, as in Example \ref{egp7}, we can show that
 $$
X_{\ref{egp8}}=(V_2 \ast (T_1 \cup T_2)) \cup (V_3 \ast (T_1 \cup T_{7, 0})) \cup (V_4 \ast (T_{7, 0} \cup T_{1, 7}))
\cup (V_5 \ast (T_{1, 7} \cup T_{3})) \cup (V_1 \ast (T_2 \cup T_{3}))
$$
is an equilibrium triangulation of $M_{-2,-1}$ with $f_0(X_{\ref{egp8}})=7+6+3+5=21$.}
\end{eg}

\begin{eg}\label{egp11}
{\rm Let $(k,\ell)= (2, -1)$. Then, $(a, b)=(0, 1)$.
Now $L(k, 1)=L(2, 1) \cong \mathbb{RP}^3$,
$L(b, a) = L(1, 0) \cong S^3$, $L(a, b)= L(0, 1) \cong S^1 \times S^2$, $L(\ell, 1)=L(-1, 1) \cong S^3,
$ $L(k\ell-1, - \ell) = L(-3, 1) \cong L(3, 1)$. 

Consider triangulations $T_2$ for $\pi^{-1}(C_1O)$, $T_{3, 7}$ for $\pi^{-1}(C_2O)$, $T_{7, 0}$
for $\pi^{-1}(C_3O)$, $T_{3}$ for $\pi^{-1}(C_4O)$ and $T_{9,0}$ for $\pi^{-1}(C_5O)$. Then, by Remark
\ref{R1}, Lemma \ref{LT123n} and Corollary \ref{CT7n}, we get that $T_2 \cup T_{3, 7}$, ~ $T_{3, 7} \cup 
T_{7, 0}$, ~ $T_3 \cup T_{7,0}$, ~ $T_3 \cup T_{9,0}$, ~ $T_2 \cup T_{9, 0}$ triangulate $\pi^{-1}(C_1OC_2), ~$
$\pi^{-1}(C_2OC_3)$, $~\pi^{-1}(C_3OC_4)$, $~\pi^{-1}(C_4OC_5)$ respectively and $T_2 \cup T_{7,0}$,
$~T_2 \cup T_{3}$, $~T_{3} \cup T_{3, 7}$, $~T_{3,7} \cup T_{9,0}$, $~T_{7,0} \cup T_{9,0}$ triangulate
$\pi^{-1}(C_1OC_3)$, $~\pi^{-1}(C_1OC_4)$, $~\pi^{-1}(C_2OC_4)$, $~\pi^{-1}(C_2OC_5)$, $~\pi^{-1}(C_3OC_5)$
respectively. This implies that
 $$
X_{\ref{egp11}}=(V_2 \ast (T_2 \cup T_{3, 7})) \cup (V_3 \ast (T_{3, 7} \cup T_{7,0})) \cup (V_4 \ast
 (T_{ 7,0} \cup T_{3})) \cup (V_5 \ast (T_{3} \cup T_{9, 0})) \cup (V_1 \ast (T_2 \cup T_{ 9, 0}))
$$
is an equilibrium triangulation of $M_{2,-1}$
with $f_0(X_{\ref{egp11}})=7+3+6+6+5=27$.}
\end{eg}

\begin{eg}\label{egp19}
{\rm Let $(k,\ell)= (-3, -1)$. Then, $(a, b)=(0, 1)$.
Now $L(k, 1)=L(-3, 1) \cong L(3,1)$,
$L(b, a) = L(1, 0) \cong S^3$, $L(a, b)= L(0, 1) \cong S^1 \times S^2$, $L(\ell, 1)=L(-1, 1) \cong S^3,
$ $L(k\ell-1, -\ell) = L(2, 1) \cong \mathbb{RP}^3$. 

Consider triangulations $T_{9,0}$ for $\pi^{-1}(C_1O)$, $T_{3}$ for $\pi^{-1}(C_2O)$, $T_{7,0}$ for
$\pi^{-1}(C_3O)$, $T_{3,7}$ for $\pi^{-1}(C_4O)$ and $T_{2}$ for $\pi^{-1}(C_5O)$.  Similarly
as in Example \ref{egp11} we can show that
 $$
X_{\ref{egp19}}=(V_2 \ast (T_{3} \cup T_{9,0})) \cup (V_3 \ast (T_3 \cup T_{7,0})) \cup (V_4 \ast
 (T_{3,7} \cup T_{7,0})) \cup (V_5 \ast (T_{2} \cup T_{3,7})) \cup (V_1 \ast (T_{2} \cup T_{9,0}))
$$
is an equilibrium triangulation of $M_{-3,-1}$ with $f_0(X_{\ref{egp19}})=7+6+6+3+5=27$.}
\end{eg}

\begin{eg}\label{egp22}
{\rm Let $(k,\ell)= (3, -1)$. Then, $(a, b)=(0, 1)$. Thus, $L(k, 1)=L(3, 1)$,
$L(b, a) = L(1, 0) \cong S^3$, $L(a, b)= L(0, 1) \cong S^1 \times S^2$,
$L(\ell, 1)=L(-1, 1) \cong S^3 $, $L(k\ell-1, -\ell) = L(-4, 1) \cong L(4, 1)$. 

Consider triangulations $T_{7,0}$ for $\pi^{-1}(C_1O)$, $T_{1, 7}$ for $\pi^{-1}(C_2O)$, $T_{8, 0}$
for $\pi^{-1}(C_3O)$, $T_{1}$ for $\pi^{-1}(C_4O)$ and $T_{4,0}$ for $\pi^{-1}(C_5O)$. Then, by Remark
\ref{R1}, Lemma \ref{LT123n}, Corollary \ref{CT4n} and Corollary \ref{CT7n}, we get that
$T_{1,7} \cup T_{7,0}$, ~ $T_{1, 7} \cup T_{8, 0}$, ~ $T_1 \cup T_{8,0}$, ~ $T_1 \cup T_{4,0}$,
~ $T_{4,0} \cup T_{7, 0}$ triangulate $\pi^{-1}(C_1OC_2)$ $~\pi^{-1}(C_2OC_3)$, $~\pi^{-1}(C_3OC_4)$,
$~\pi^{-1}(C_4OC_5)$ respectively and $T_{7,0} \cup T_{8,0}$, $~T_{1} \cup T_{7,0}$, $~T_{1} \cup T_{1, 7}$,
$~T_{1,7} \cup T_{4,0}$, $~T_{4,0} \cup T_{8,0}$ triangulate $\pi^{-1}(C_1OC_3)$, $~\pi^{-1}(C_1OC_4)$,
$~\pi^{-1}(C_2OC_4)$, $~\pi^{-1}(C_2OC_5)$, $~\pi^{-1}(C_3OC_5)$ respectively. This implies that
 $$
X_{\ref{egp22}}=(V_2 \ast (T_{1,7} \cup T_{ 7,0})) \cup (V_3 \ast (T_{1, 7} \cup T_{8,0})) \cup (V_4 \ast
 (T_{ 1} \cup T_{8,0})) \cup (V_5 \ast (T_{1} \cup T_{4, 0})) \cup (V_1 \ast (T_{4,0} \cup T_{ 7, 0}))
$$
is an equilibrium triangulation of $M_{3,-1}$
with $f_0(X_{\ref{egp22}})=7+3+10+6+6+5=37$.}
\end{eg}
 
\begin{eg}\label{egp1}
{\rm Let $(k, \ell)= (0,0)$. Then, $(a, b)=(1, 1)$.
Now $L(k, 1)= L(0, 1) \cong S^2 \times S^1$,
$L(b, a) =L(1, 1) \cong S^3$, $L(a, b) = L(1, 1) \cong S^3$, $L(\ell, 1)=L(0, 1) \cong
S^1 \times S^2$ and $L(\ell k-1, -\ell)= L(-1,0) \cong S^3$.

Consider triangulations $T_1$ for $\pi^{-1}(C_1O)$, $T_{2}$ for $\pi^{-1}(C_2O)$, $T_{1, 7}$
for $\pi^{-1}(C_3O)$, $T_{3}$ for $\pi^{-1}(C_4O)$ and $T_{2,7}$ for $\pi^{-1}(C_5O)$. Then, by
Remark \ref{R1} and Lemma \ref{LT123n}, we get that $T_1 \cup T_2$, $T_2 \cup T_{1, 7}$,
$T_{1, 7} \cup T_{3}$, $T_3 \cup T_{2, 7}$, $T_{1} \cup T_{2, 7}$ triangulate $\pi^{-1}(C_1OC_2)$,
$\pi^{-1}(C_2OC_3)$, $\pi^{-1}(C_3OC_4)$, $\pi^{-1}(C_4OC_5)$, $\pi^{-1}(C_1OC_5)$ respectively and
$T_1 \cup T_{1, 7}$, $T_1 \cup T_{3}$, $T_2 \cup T_{3}$, $T_2 \cup T_{2, 7}$, $T_{1, 7} \cup T_{2, 7}$
triangulate $\pi^{-1}(C_1OC_3)$, $\pi^{-1}(C_1OC_4)$, $\pi^{-1}(C_2OC_4)$, $\pi^{-1}(C_2OC_5)$,
$\pi^{-1}(C_3OC_5)$ respectively. Now consider the cone $\pi^{-1}(I_i)$ over $\pi^{-1}(C_{i-1})C_i)$ 
for $1  \leq  i \leq 5$ (addition is modulo 5). This implies that
$$
X_{\ref{egp1}}=(V_2 \ast (T_1 \cup T_2)) \cup (V_3 \ast (T_2 \cup T_{1, 7})) \cup (V_4 \ast (T_3 \cup T_{1, 7}))
\cup (V_5 \ast (T_3 \cup T_{2, 7})) \cup (V_1 \ast (T_1 \cup T_{1, 7}))
$$
is an equilibrium triangulation of $M_{0,0}$
with $f_0(X_{\ref{egp1}}) = 7+3+3+5=18$. 

Since the minimum number of vertices require for a triangulation of $S^1 \times S^2$ is $10$ (cf. \cite{BD}),
any triangulation of $\pi^{-1}(C_1OC_3)$ and $\pi^{-1}(C_2OC_5)$ needs $10$ vertices. This implies that $X_{\ref{egp1}}$
is a vertex minimal equilibrium triangulation of $M_{0,0}$.}
\end{eg}

\begin{eg}\label{egp5}
 {\rm Let $(k,\ell)= (1, 0)$. Then, $(a, b)=(1, 2)$. 
 Now,  $L(k, 1)=L(1, 1) \cong S^3$, $L(b, a)=L(2, 1) \cong \mathbb{RP}^3$,
 $L(a, b)=L(1, 2)=L(1, 0) \cong S^3$,  $L(\ell, 1)=L(0, 1) \cong S^1 \times S^2$, $L(\ell k-1, -\ell)=L(-1, 0) \cong S^3$.
 
 Consider triangulations $T_2$ for $\pi^{-1}(C_1O)$, $T_1$ for $\pi^{-1}(C_2O)$, $T_3$ for $\pi^{-1}(C_3O)$,
 $T_{7, 0}$ for $\pi^{-1}(C_4O)$ and $T_{1, 7}$ for $\pi^{-1}(C_5O)$. Similarly, as in Example \ref{egp1},
 we can show that
 $$
X_{\ref{egp5}}=(V_2 \ast (T_1 \cup T_2)) \cup (V_3 \ast (T_1 \cup T_3)) \cup (V_4 \ast (T_3 \cup T_{7, 0}))
\cup (V_5 \ast (T_{7, 0} \cup T_{1, 7})) \cup (V_1 \ast (T_2 \cup T_{1, 7}))
$$
is an equilibrium triangulation of $M_{1,0}$ with $f_0(X_{\ref{egp5}})=7+6+3+5=21$.}
\end{eg}

\begin{eg}\label{egp12}
{\rm  Let $(k,\ell)= (-2, 0)$. Then, $(a, b)=(1, -1)$.
Now, $L(k, 1)=L(-2, 1) \cong \mathbb{RP}^3$,
$L(b, a) = L(3, 1)$, $L(a, b)= L(1, 3) \cong S^3 $, $L(\ell, 1)=L(0, 1) \cong S^1 \times S^2,
$ $L(k\ell-1, -\ell) = L(-1, 0) \cong S^3$. 

Consider triangulations $T_{7,0}$ for $\pi^{-1}(C_1O)$, $T_{3, 7}$ for $\pi^{-1}(C_2O)$, $T_{2}$
for $\pi^{-1}(C_3O)$, $T_{9,0}$ for $\pi^{-1}(C_4O)$ and $T_{3}$ for $\pi^{-1}(C_5O)$.  Similarly,
as in Example \ref{egp11}, we can show that
 $$
X_{\ref{egp12}}=(V_2 \ast (T_{7,0} \cup T_{3, 7})) \cup (V_3 \ast (T_2 \cup T_{3,7})) \cup (V_4 \ast
 (T_{2} \cup T_{9,0})) \cup (V_5 \ast (T_{3} \cup T_{9, 0})) \cup (V_1 \ast (T_3 \cup T_{ 7, 0}))
$$
is an equilibrium triangulation of $M_{-2,0}$ with $f_0(X_{\ref{egp12}})=7+6+3+6+5=27$.}
\end{eg}

\begin{eg}\label{egp120}
{\rm Let $(k,\ell)= (2, 0)$. Then, $(a, b)=(1, 3)$.
Now, $L(k, 1)=L(2, 1) \cong \mathbb{RP}^3$,
$L(b, a) = L(-1, 1) \cong S^3$, $L(a, b)= L(1, -1) \cong S^3$, $L(\ell, 1)=L(0, 1) \cong S^1 \times S^2,
$ $L(k\ell-1, -\ell) = L(-1, 0) \cong S^3$. 

Consider triangulations $T_{2}$ for $\pi^{-1}(C_1O)$, $T_{3, 7}$ for $\pi^{-1}(C_2O)$, $T_{7,0}$
for $\pi^{-1}(C_3O)$, $T_{1}$ for $\pi^{-1}(C_4O)$ and $T_{3}$ for $\pi^{-1}(C_5O)$.  Similarly,
as in Example \ref{egp11}, we can show that
 $$
X_{\ref{egp120}}=(V_2 \ast (T_{2} \cup T_{3, 7})) \cup (V_3 \ast (T_{3,7} \cup T_{7,0})) \cup (V_4 \ast
 (T_{7,0} \cup T_{1})) \cup (V_5 \ast (T_{1} \cup T_{3})) \cup (V_1 \ast (T_3 \cup T_{2}))
$$
is an equilibrium triangulation of $M_{2,0}$ with $f_0(X_{\ref{egp120}})=7+3+6+5=21$.}
\end{eg}

\begin{eg}\label{egp18}
{\rm Let $(k,\ell)= (-3, 0)$. Then, $(a, b)=(1, -2)$.
Now $L(k, 1)=L(-3, 1) \cong L(3, 1)$,
$L(b, a) = L(-2, 1) \cong \mathbb{RP}^3$, $L(a, b)= L(1, -2) \cong S^3$, $L(\ell, 1)=L(0, 1) \cong S^1 \times S^2,
$ $L(k\ell-1, -\ell) = L(-1, 0) \cong S^3$. 

Consider triangulations $T_{7,0}$ for $\pi^{-1}(C_1O)$, $T_{3}$ for $\pi^{-1}(C_2O)$, $T_{9,0}$ for
$\pi^{-1}(C_3O)$, $T_{2}$ for $\pi^{-1}(C_4O)$ and $T_{3,7}$ for $\pi^{-1}(C_5O)$.  Similarly
as in Example \ref{egp11} we can show that
 $$
X_{\ref{egp18}}=(V_2 \ast (T_{7,0} \cup T_{3})) \cup (V_3 \ast (T_3 \cup T_{9,0})) \cup (V_4 \ast
 (T_{2} \cup T_{9,0})) \cup (V_5 \ast (T_{2} \cup T_{3,7})) \cup (V_1 \ast (T_{3,7} \cup T_{7,0}))
$$
is an equilibrium triangulation of $M_{-3,0}$ with $f_0(X_{\ref{egp18}})=7+6+6+3+5=27$.}
\end{eg}

\begin{eg}\label{egp23}
{\rm Let $(k,\ell)= (3, 0)$. Then, $(a, b)=(1, 4)$.
Now $L(k, 1)=L(3, 1)$, $L(b, a) = L(4, 1)$, $L(a, b)= L(1, 4) \cong S^3$,
 $L(\ell, 1)=L(0, 1) \cong S^1 \times S^2,$ $L(k\ell-1, -\ell) = L(-1, 0) \cong S^3$. 

Consider triangulations $T_{8,0}$ for $\pi^{-1}(C_1O)$, $T_{1}$ for $\pi^{-1}(C_2O)$, $T_{7,0}$ for
$\pi^{-1}(C_3O)$, $T_{4,0}$ for $\pi^{-1}(C_4O)$ and $T_{1,7}$ for $\pi^{-1}(C_5O)$.  Similarly
as in Example \ref{egp22} we can show that
 $$
X_{\ref{egp23}}=(V_2 \ast (T_{1} \cup T_{8,0})) \cup (V_3 \ast (T_{1} \cup T_{7,0})) \cup (V_4 \ast
 (T_{4,0} \cup T_{7,0})) \cup (V_5 \ast (T_{1,7} \cup T_{4, 0})) \cup (V_1 \ast (T_{1,7} \cup T_{8,0}))
$$
is an equilibrium triangulation of $M_{3,0}$ with $f_0(\ref{egp23})=7+6+6+10+3+5=37$.}
\end{eg}

Now assume $(k, \ell)=(-1,-1)$. Then, $a+b=1$. By the same argument as before, we can assume that $a \leq b$.
So, assume that $b >0$. Here we are considering the cases $(a, b)=(0,1), (-1, 2), (-2, 3)$ and $(-3, 4)$.

\begin{eg}\label{egp28}
 {\rm Let $(k,\ell)= (-1, -1)$. Then, $a+b=1$. Suppose $(a, b)=(0, 1)$.
 Now $L(k, 1)=L(-1, 1) \cong S^3$, $L(b, a)= L(1, 0) \cong S^3$,
 $L(a, b)=L(0, 1) \cong S^1 \times S^2$, $L(\ell, 1) =L(-1, 1) \cong S^3$, $L(k\ell-1, -\ell)=  L(0, 1) \cong S^1 \times S^2$.
 
 Consider triangulations $T_1$ for $\pi^{-1}(C_1O)$, $T_{3,7}$ for $\pi^{-1}(C_2O)$, $T_{2}$ for
 $\pi^{-1}(C_3O)$, $T_{3}$ for $\pi^{-1}(C_4O)$ and $T_{2,7}$ for $\pi^{-1}(C_5O)$. Similarly, as in 
 Example \ref{egp1}, we can show that
 $$
X_{\ref{egp28}}=(V_2 \ast (T_1 \cup T_{3,7})) \cup (V_3 \ast (T_{3,7} \cup T_{2})) \cup (V_4 \ast (T_{2} \cup T_{3}))
\cup (V_5 \ast (T_3 \cup T_{2, 7})) \cup (V_1 \ast (T_1 \cup T_{2,7}))
$$
is a vertex minimal equilibrium triangulation of $M_{1,1,1}$ with $f_0(X_{\ref{egp28}})=7+3+3+5=18$.}
\end{eg}

\begin{eg}\label{egp29}
{\rm Let $(k,\ell)= (-1, -1)$. Then, $a+b=1$. Suppose $(a, b)=(-1, 2)$.
Now $L(k, 1)=L(-1, 1) \cong S^3$, $L(b, a) = L(2, -1) \cong \mathbb{RP}^3$,
 $L(a, b)= L(-1, 2) \cong S^3$, $L(\ell, 1)=L(-1, 1) \cong S^3,$ $L(k\ell-1, -\ell) = L(0, 1) \cong S^1 \times S^2$.

Consider triangulations $T_{1}$ for $\pi^{-1}(C_1O)$, $T_{3}$ for $\pi^{-1}(C_2O)$, $T_{2}$ for
$\pi^{-1}(C_3O)$, $T_{9,0}$ for $\pi^{-1}(C_4O)$ and $T_{2,7}$ for $\pi^{-1}(C_5O)$.  Similarly
as in Example \ref{egp22} we can show that
 $$
X_{\ref{egp29}}=(V_2 \ast (T_{1} \cup T_{3})) \cup (V_3 \ast (T_{3} \cup T_{2})) \cup (V_4 \ast
 (T_{2} \cup T_{9,0})) \cup (V_5 \ast (T_{9,0} \cup T_{2, 7})) \cup (V_1 \ast (T_{2,7} \cup T_{1}))
$$
is an equilibrium triangulation of the $M_{1,1,2}$ with $f_0(X_{\ref{egp29}})=7+6+3+5=21$.}
\end{eg}

\begin{eg}\label{egp30}
{\rm Let $(k,\ell)= (-1, -1)$. Then, $a+b=1$. Suppose $(a, b)=(-2, 3)$.
Now $L(k, 1)=L(-1, 1) \cong S^3$, $L(b, a) = L(3, -2) \cong L(3,1)$, $L(a, b)= L(-2, 3) \cong \mathbb{RP}^3$,
 $L(\ell, 1)=L(-1, 1) \cong S^3,$ $L(k\ell-1, -\ell) = L( 0, 1) \cong S^1 \times S^2$. 

Consider triangulations $T_{7,0}$ for $\pi^{-1}(C_1O)$, $T_{3}$ for $\pi^{-1}(C_2O)$, $T_{1}$ for
$\pi^{-1}(C_3O)$, $T_{8,0}$ for $\pi^{-1}(C_4O)$ and $T_{1,7}$ for $\pi^{-1}(C_5O)$.  Similarly
as in Example \ref{egp22} we can show that
 $$
X_{\ref{egp30}}=(V_2 \ast (T_{7,0} \cup T_{3})) \cup (V_3 \ast (T_{3} \cup T_{1})) \cup (V_4 \ast
 (T_{1} \cup T_{8,0})) \cup (V_5 \ast (T_{8,0} \cup T_{1,7})) \cup (V_1 \ast (T_{1,7} \cup T_{7,0}))
$$
is an equilibrium triangulation of $M_{1,1,3}$ with $f_0(X_{\ref{egp30}})=7+6+6+3+5=27$.}
\end{eg}

\begin{eg}\label{egp31}
{\rm Let $(k,\ell)= (-1, -1)$. Then, $a+b=1$. Suppose $(a, b)=(-3, 4)$.
Now $L(k, 1)=L(-1, 1) \cong S^3$, $L(b, a) = L(4, -3) \cong L(4,1)$,
 $L(a, b)= L(-3, 4) \cong L(3, 1)$, $L(\ell, 1)=L(-1, 1) \cong S^3,$ $L(k\ell-1, -\ell) = L(0, 1) \cong S^1 \times S^2$. 

Consider triangulations $T_{4,0}$ for $\pi^{-1}(C_1O)$, $T_{7,0}$ for $\pi^{-1}(C_2O)$, $T_{1}$ for
$\pi^{-1}(C_3O)$, $T_{8,0}$ for $\pi^{-1}(C_4O)$ and $T_{1,7}$ for $\pi^{-1}(C_5O)$.  Similarly
as in Example \ref{egp22} we can show that
 $$
X_{\ref{egp31}}=(V_2 \ast (T_{4,0} \cup T_{7,0})) \cup (V_3 \ast (T_{7,0} \cup T_{1})) \cup (V_4 \ast
 (T_{1} \cup T_{8,0})) \cup (V_5 \ast (T_{8,0} \cup T_{1,7})) \cup (V_1 \ast (T_{1,7} \cup T_{4,0}))
$$
is an equilibrium triangulation of $M_{1,1,4}$ with $f_0(X_{\ref{egp31}})=7+10+6+6+3+5=37$.}
\end{eg}

\begin{qn}
{\rm Find an equilibrium triangulation of $M(P, \xi)$ when $|k|, |\ell| \geq 4$.}
\end{qn}


\section{Equilibrium triangulations of quasitoric 4-manifolds over hexagons:}\label{m6}

Let $\pi : M \to P$ be a 4-dimensional quasitoric manifold over the hexagon $P=V_1 \cdots V_6V_1$
and $\xi: \{V_1V_2, \ldots, V_5V_6, V_1V_6\} \to \ZZ^2$ be a characteristic function on $P$.
By the definition of characteristic function, we may assume $\xi(V_1V_2)=(-1, 0)$ and $\xi(V_2V_3)=(0, -1)$.
So, $\xi(V_3V_4)=(1, k)$ and $\xi(V_1V_6)=(\ell, 1)$ up to sign of characteristic vectors for some $\ell, k \in \ZZ$.
Let $\xi(V_4V_5) =(a, b)$ and $\xi(V_5V_6)=(c, d)$. Then, $c-\ell d=\pm 1$, $ ad-bc = \pm 1$ and $ak - b=\pm 1$. 
We choose the characteristic vectors such that 
$$
\{(1,k),(a,b)\}, ~ \{(a,b), (c,d)\},  \mbox{ and } \{(c,d), (\ell,1)\}
$$
form a positively oriented basis. Then
\begin{equation}\label{eqm6}
 b-ak=1, ~ ad-bc = 1, \mbox{ and } c - d\ell= 1.
\end{equation}


In this section, we consider the rectangular subdivision of hexagon as in Figure \ref{egch007}.
When $P$ is a heptagon there are many characteristic functions of it. We study equilibrium triangulation of
$M = M(P, \xi)$ whose characteristic functions are given in the Figure \ref{egch007}.
For the characteristic function of Figure \ref{egch007}, by Orlik and Reymond \cite{OR}, we have
\begin{align*}
&\pi^{-1}(C_iOC_{i+1})  \cong  S^3  \cong  \pi^{-1}(C_1OC_6) ~\mbox{ for } 1\leq  i \leq 5, \\
&\pi^{-1}(C_1OC_3) \cong L(k, 1), ~~ \pi^{-1}(C_1OC_4) \cong L(b, a), ~~ \pi^{-1}(C_1OC_5) \cong L(d, c), \\
& \pi^{-1}(C_2OC_4)=L(a, b), ~~ \pi^{-1}(C_2OC_5)= L(c, d), ~~ \pi^{-1}(C_2OC_6) \cong L(\ell, 1), \\
&\pi^{-1}(C_3OC_5)= L(ck-d, 1), ~~ \pi^{-1}(C_3OC_6) \cong L(k\ell-1, \ell), ~~ \pi^{-1}(C_4OC_6) \cong L(a-b\ell, 1).
\end{align*}

\begin{figure}[ht]
\centerline{\scalebox{0.90}{\input{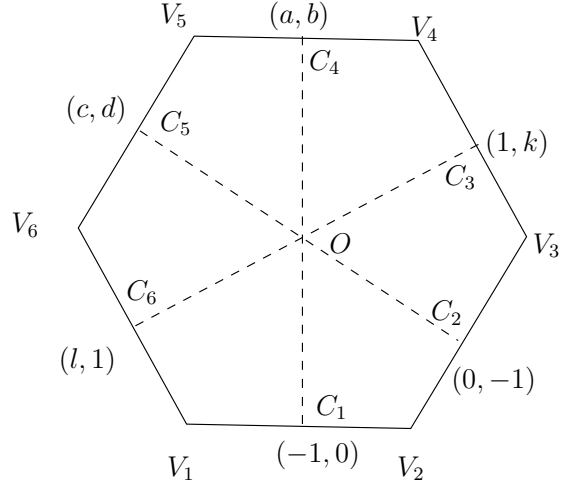} } }
\caption {Characteristic function of hexagon.}
\label{egch007}
\end{figure}

\subsection{Characteristic functions of hexagon.}\label{charhex}

In this subsection, we compute all characteristic functions (see Figure \ref{egch007}) of hexagon when
$-1 \leq \ell \leq 1$, $-3 \leq k \leq 3$ and $-3 \leq a, c \leq 3$.

\begin{itemize}
 \item If $(k,\ell)=(-3,0)$, then $b=-3a+1$, $c=1$ and $ad-b=1$. So, $a(d+3)=2$. Thus, $(a, b, d)=(1,-2, -1)$,
 $(-1, 4, -5)$, $(2, -5, -2)$, $(-2, 7, -4)$. In this case, complete characteristic functions are given by
 $(a,b,c,d) =(1, -2, 1,-1), (2, -5, 1, -2)$.
 
 \item If $(k,\ell)=(-2,0)$, then $b=-2a+1$, $c=1$ and $ad-b=1$. So, $a(d+2)=2$. Thus, $(a, b, d)=(1,-1, 0)$,
 $(-1, 3, -4)$, $(2, -3, -1)$, $(-2, 5, -3)$. In this case, complete characteristic functions are given by
 $(a,b,c,d) =(1, -1, 1,0), (2, -3, 1, -1)$.
 
 \item If $(k,\ell)=(-1,0)$, then $b=-a+1$, $c=1$ and $ad-b=1$. So, $a(d+1)=2$. Thus, $(a, b, d)=(1,0, 1)$,
 $(-1, 2, -3)$, $(2, -1, 0)$, $(-2, 3, -2)$. In this case, complete characteristic functions are given by
 $(a,b,c,d) =(1, 0, 1, 1), (2, -1, 1, 0)$.
 
  \item If $(k,\ell)=(0,0)$, then $b=1$, $c=1$ and $ad=2$. So, $(a, d)=(1,2)$,
 $(-1, 2)$, $(2, 1)$, $(-2, -1)$. In this case, complete characteristic functions are given by
 $(a,b,c,d) =(1, 1, 1, 2), (2, 1, 1, 1)$.
 
  \item If $(k,\ell)=(1,0)$, then $b=a+1$, $c=1$ and $ad-b=1$. So, $a(d-1)=2$. Thus, $(a, b, d)=(1,2, 3)$,
 $(-1, 0, -1)$, $(2, 3, 2)$, $(-2, 1, 0)$. In this case, complete characteristic functions are given by
 $(a,b,c,d) =(1, 2, 1, 3), (2, 3, 1, 2)$.
 
  \item If $(k,\ell)=(2,0)$, then $b=-2a+1$, $c=1$ and $ad-b=1$. So, $a(d-2)=2$. Thus, $(a, b, d)=(1, 3, 4)$,
 $(-1, -1, 0)$, $(2, 5, 3)$, $(-2, -3, 1)$. In this case, complete characteristic functions are given by
 $(a,b,c,d) =(1, 3, 1, 4), (2, 5, 1, 3)$.
 
  \item If $(k,\ell)=(3,0)$, then $b=3a+1$, $c=1$ and $ad-b=1$. So, $a(d-3)=2$. Thus, $(a, b, d)=(1, 4, 5)$,
 $(-1, -2, 1)$, $(2, 7, 4)$, $(-2, -5, 2)$. In this case, complete characteristic functions are given by
 $(a,b,c,d) =(1, 4, 1, 5), (2, 7, 1, 4)$.
 
  \item If $(k,\ell)=(-3,-1)$, then $b=-3a+1$, $c=-d+1$ and $ad-bc=1$. So, $ad - (-3a+1)(-d+1)=1$. This implies,
  $d= 1 + \frac{a-1}{2a-1}$. One can show that $d \in \ZZ$ if and only if $a=0, 1$. Therefore,  $(a, b, c, d)=
  (0, 1, -1, 2)$, $(1, -2, 0, 1)$. Both of them give complete characteristic function on $P$.
 
  \item If $(k,\ell)=(-2,-1)$, then $b=-2a+1$, $c=-d+1$ and $ad-bc=1$. This implies, $ad - (2a-1)(d-1)=1$. So,
  $(1-a)(d-2)=0$. Thus, $a=1$ or $d=2$. In this case, the complete characteristic functions are given by
 $(a,b,c,d) =(1, -1, -3, 4),$ $(1, -1, -2, 3),$ $(1, -1, -1, 2),$ $~(1, -1, 0, 1)$, $~(1, -1, 1, 0)$, $~(1, -1, 2, -1)$,
 $~(1, -1, 3, -2)$, $(-3, 7, -1, 2)$, $(-2, 5, -1, 2)$, $(-1, 3, -1, 2)$, $(0, 1, -1, 2)$, $(2, -3, -1, 2)$,
 $(3, -5, -1, 2)$. We construct an equilibrium triangulation of the complex projective surfaces when
 $(a, b,c,d) = (1, -1, -3, 4), (3, -5, -1, 2)$ in Example \ref{egh18} and \ref{egh23}. Other cases are similar.
 
 \item If $(k,\ell)=(-1,-1)$, then $b=-a+1$, $c=-d+1$ and $ad-bc=1$. So, $ad - (a-1)(d-1)=1$. This implies,
  $a+d=2$. Thus, $d=2-a$ and $c=a-1$. In this case, complete characteristic functions are given by
 $(a,b,c,d) =(a, -a+1, a-1, 2-a) = (-3, 4, -4, 5)$, $(-2, 3, -3, 4)$, $(-1, 2, -2, 3)$, $(0, 1, -1, 2)$,
 $(1, 0, 0, 1)$, $(2, -1, 1, 0)$, $3, -2, 2, -1$. We construct an equilibrium triangulation of the complex
 projective surface when $(a, b,c,d) = (-2, 3, -3, 4)$ in Example \ref{egh31}. Other cases are similar.
 
 
  \item If $(k,\ell)=(1,-1)$, then $b=a+1$, $c=-d+1$ and $ad-bc=1$. So, $ad + (a+1)(d-1)=1$. This implies,
  $d= \frac{a+2}{2a+1}$. One can show that $d \in \ZZ$ if and only if $a=-2, -1, 0 , 1$.
  Therefore, $(a, b, c, d)= (-2, -1, 1, 0)$, $(-1, 0, 2, -1)$, $(0, 1, -1, 2)$, $(1, 2, 0, 1)$. 
  In this case, complete characteristic functions are given by $(a,b,c,d) =(0, 1, -1, 2), (1, 2, 0, 1)$.
  
  \item If $(k,\ell)=(2,-1)$, then $b=2a+1$, $c=-d+1$ and $ad-bc=1$. So, $ad + (2a+1)(d-1)=1$. This implies,
  $d= \frac{2a+2}{3a+1}$. One can show that $d \in \ZZ$ if and only if $a=-1, 0, 1$. Therefore,  $(a, b, c, d)=
  (-1, -1, 1, 0)$, $(0, 1, -1, 2)$, $(1, 3, 0, 1)$. In this case, complete characteristic functions are given by
 $(a,b,c,d) =(0, 1, -1, 2), (1, 3, 0, 1)$.

 \item If $(k,\ell)=(3,-1)$, then $b=3a+1$, $c=-d+1$ and $ad-bc=1$. So, $ad + (3a+1)(d-1)=1$. This implies,
  $d= \frac{3a+2}{4a+1}$. Therefore, $d \in \ZZ$ if and only if $a=0 , 1$. Thus,  $(a, b, c, d)=
  (0, 1, -1, 2)$, $(1, 4, 0, 1)$. Both of the give complete characteristic function on $P$.

 \item If $(k,\ell)=(-3,1)$, then $b=-3a+1$, $c=d+1$ and $ad-bc=1$. So, $ad - (-3a+1)(d+1)=1$. This implies,
  $a= \frac{d+2}{4d+3}$. Therefore, $a \in \ZZ$ if and only if $d=-2$. Thus, $(a, b, c, d)= (0, 1, -1, -2)$.
  
 \item If $(k,\ell)=(-2,1)$, then $b=-2a+1$, $c=d+1$ and $ad-bc=1$. So, $ad - (-2a+1)(d+1)=1$. This implies,
  $a= \frac{d+2}{3d+2}$. One can show that $a \in \ZZ$ if and only if $d=0 , -2$. Therefore,  $(a, b, c, d)=
  (1, -1, 1, 0)$, $(0, 1, -1, -2)$.

 

 \item If $(k,\ell)=(1,1)$, then $b=a+1$, $c=d+1$ and $ad-bc=1$. So, $ad - (a+1)(d+1)=1$. This implies,
  $d= -a-2$. Therefore,  $(a, b, c, d)= (a, a+1, -a-1, -a-2)$.

 \item If $(k,\ell)=(2,1)$, then $b=2a+1$, $c=d+1$ and $ad-bc=1$. So, $ad - (2a+1)(d+1)=1$. This implies,
 $(d+2)(a+1)=0$. Therefore, if $a=-1$ then $b=-1$ and $d=c-1$ and if $d=-2$ then $c=-1$ and $b=2a+ 1$.

 \item If $(k,\ell)=(3,1)$, then $b=3a+1$, $c=d+1$ and $ad-bc=1$. So, $ad - (3a+1)(d+1)=1$. This implies,
 $-d= 1 + \frac{a+1}{2a+1}$. Thus, $d \in \ZZ$ if and only if $a=0 , -1$. Therefore,  $(a, b, c, d)=
 (0, 1, -1, -2)$, $(-1, -2, 0, -1)$.
 \end{itemize}
 
 \begin{remark}
  For the last five cases there are no complete characteristic function.
 \end{remark}

 \begin{lemma}
 Let $M_{1}$ and $M_2$ be quasitoric manifolds over the hexagon such that corresponding $(k, \ell)$ are $(-1,0)$
 (or $(1, 0)$ or $(1, -1)$) and $(0, -1)$ (or $(0, 1)$ or $(-1, 1)$) respectively. Then, $M_1$ and $M_2$ are 
 $\delta$-equivariantly homeomorphic where $\delta$ is the automorphism  of $\mathbb{T}^2$ obtained by flipping the coordinates.
Moreover, this homeomorphism induces a bijective correspondence between the equilibrium triangulations of $M_1$ and $M_2$.
 \end{lemma}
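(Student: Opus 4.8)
The plan is to mimic the proof of the analogous pentagon lemma, realizing the coordinate flip $\delta(x,y)=(y,x)$ on $\TT^2$ by a reflection of the hexagon. First I would observe that the hexagon $P=V_1\cdots V_6V_1$ carries the combinatorial reflection $\psi$ about the diagonal through $V_2$ and $V_5$; as a diffeomorphism of $P$ viewed as a manifold with corners, $\psi$ fixes $V_2$ and $V_5$ and interchanges $V_1\leftrightarrow V_3$ and $V_4\leftrightarrow V_6$, so on edges it acts by $V_1V_2\leftrightarrow V_2V_3$, $V_3V_4\leftrightarrow V_1V_6$, and $V_4V_5\leftrightarrow V_5V_6$. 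This choice of axis is forced by the requirement that $\psi$ swap the two standard edges $V_1V_2,V_2V_3$, whose vectors $(-1,0),(0,-1)$ are exactly the two vectors interchanged by $\delta$.

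Next I would check that $\psi$ carries the characteristic pair $(P,\delta\circ\xi)$ of $M_1$ onto the characteristic pair $(P,\eta)$ of $M_2$ up to sign, i.e. $\eta(\psi(E))=\pm(\delta\circ\xi)(E)$ for every edge $E$. Write $\xi(V_3V_4)=(1,k)$, $\xi(V_1V_6)=(\ell,1)$, $\xi(V_4V_5)=(a,b)$, $\xi(V_5V_6)=(c,d)$ for $M_1$, and the corresponding data $(1,\ell)$, $(k,1)$, $(\tilde a,\tilde b)$, $(\tilde c,\tilde d)$ for $M_2$ (whose parameters are $(k,\ell)$ swapped). The matching on the four standard edges is immediate: $\delta$ sends $(-1,0)\mapsto(0,-1)$, $(0,-1)\mapsto(-1,0)$, $(1,k)\mapsto(k,1)$, $(\ell,1)\mapsto(1,\ell)$, which are precisely the values of $\eta$ on the image edges $V_2V_3$, $V_1V_2$, $V_1V_6$, $V_3V_4$. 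For the two remaining edges I would set $(\tilde a,\tilde b,\tilde c,\tilde d)=(d,c,b,a)$ and verify that this tuple solves the defining system \eqref{eqm6} for $M_2$: the relations $\tilde b-\tilde a\ell=1$, $\tilde a\tilde d-\tilde b\tilde c=1$, $\tilde c-\tilde d k=1$ become respectively $c-d\ell=1$, $ad-bc=1$, $b-ak=1$, which are exactly the three equations \eqref{eqm6} for $M_1$. Hence $\eta(\psi(V_4V_5))=(b,a)=\pm(\delta\circ\xi)(V_4V_5)$ and $\eta(\psi(V_5V_6))=(d,c)=\pm(\delta\circ\xi)(V_5V_6)$, so the two characteristic pairs are equivalent. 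By Proposition \ref{clema3} (applied to the pair $(P,\delta\circ\xi)$, which defines a manifold $\delta$-equivariantly homeomorphic to $M_1$) together with \eqref{eqm6}, the first claim follows.

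For the second claim I would argue exactly as in the pentagon case: because $\delta$ merely interchanges the two circle factors of $\TT^2$, the induced $\delta$-equivariant homeomorphism $f\colon M_1\to M_2$ sends the equilibrium set of $M_1$ onto that of $M_2$ and carries each zone of influence (a $\TT^2$-invariant $4$-ball lying over a rectangle $I_i$) to a zone of influence of $M_2$, together with its boundary. Consequently $f$ induces a bijective correspondence between the equilibrium triangulations of $M_1$ and $M_2$.

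The main obstacle I expect is the sign and orientation bookkeeping in the edge-by-edge verification $\eta(\psi(E))=\pm(\delta\circ\xi)(E)$ — in particular confirming that the flipped tuple $(d,c,b,a)$ really is the characteristic data of $M_2$ and not merely of some other complete pair over the hexagon. As shown above, this reduces to checking that the permuted system coincides term-by-term with the original \eqref{eqm6}; once the reflection axis $V_2V_5$ is selected correctly the remaining verification is routine.
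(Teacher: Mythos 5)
Your proposal is correct and takes essentially the same route as the paper: the paper's proof likewise uses the reflection of the hexagon fixing $V_2$ and $V_5$ and concludes via Proposition \ref{clema3} and Equation (\ref{eqm6}), then notes that since $\delta$ merely swaps the coordinates of $\TT^2$, the $\delta$-equivariant homeomorphism preserves the equilibrium set and the zones of influence. Your edge-by-edge verification that $(\tilde a,\tilde b,\tilde c,\tilde d)=(d,c,b,a)$ solves the permuted system is exactly the computation the paper leaves implicit, and it checks out.
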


\begin{proof}
Consider the reflection of the hexagon which fixes $V_2$ and $V_5$. Then, the first claim of the lemma follows
from Proposition \ref{clema3} and Equation (\ref{eqm6}).
Since the automorphism $\delta$ of $\TT^2$ interchange the coordinates of $\TT^2$, the $\delta$-equivariant 
homeomorphism preserves the equilibrium set, zones of influence and the boundary of zones of influence. So, the
second claim follows.
 \end{proof}

\subsection{Nonsingular projective surfaces over hexagon}\label{eqhex}

Under the assumption of subsection \ref{charhex}, the following examples are possible non-equivariant complex projective
 surfaces over the hexagon (except when 
$$
(a,c,k, \ell)\in \{(1,1,3,0),  (-3,-1,-2,-1),  (-3,-4,-1,-1), (0, -1, 3, -1), (1,0,3,-1)\}).
$$ 
The corresponding characteristic functions in examples below are complete.
 Therefore, by Proposition \ref{corf}, the associate quasitoric manifolds are projective surfaces. The examples respect
 the ordering of Subsection \ref{charhex}. We recall the number of vertices of solid tori of Section \ref{tortri}.
 By Example \ref{tst1}, $f_0(T_i) = f_0(\mathcal{T})=7$ if $1 \leq i \leq 3$.
By Example \ref{ETjn}, $f_0(T_{j,n})= f_0(\mathcal{T})+2$ if $1 \leq j \leq 3, 0 \leq n \leq 6$ and
$f_0(T_{j,n})= f_0(\mathcal{T})+3$ if $1 \leq j \leq 3, n \geq 7$. By Example \ref{ET56n}, $f_0(T_{j,n})=f_0(\mathcal{T}) +10$ 
if $4 \leq j \leq 6, n \geq 0$. By Example \ref{ET89n}, $f_0(T_{j,n}) = f_0(\mathcal{T}) +6$ if $7 \leq j \leq 9$ and $ n \geq 0$.

\begin{eg}\label{egh11}
{\rm Let $(k,\ell)= (-3, 0)$, $(a,b)=(1, -2)$ and $(c,d)=(1,-1)$. Then, $L(k, 1)= L(-3, 1) \cong L(3,1)$,
$~L(b, a) =L(-2, 1) \cong \mathbb{RP}^3$, $~L(d, c)=L(-1,1) \cong S^3$, $~L(a, b) = L(1, -2) \cong S^3$,
$~L(c,d) = L(1, -1) \cong S^3$, $~L(\ell, 1) =L(0,1) \cong S^1 \times S^2$, $~L(ck-d, 1)=L(-2, 1) \cong \mathbb{RP}^3$,
$~L(\ell k-1, \ell)= L(-1,0) \cong S^3$ and $L(a-b \ell, 1)=L(1,1) \cong S^3$.

Consider triangulations $T_{7,0}$ for $\pi^{-1}(C_1O)$, $T_{1}$ for $\pi^{-1}(C_2O)$, $T_{8,0}$ for
$\pi^{-1}(C_3O)$, $T_{2}$ for $\pi^{-1}(C_4O)$, $T_{3}$ for $\pi^{-1}(C_5O)$ and $T_{1,7}$ for $\pi^{-1}(C_6O)$.
Then, by Remark \ref{R1}, Lemma \ref{LT123n} and Corollary \ref{CT7n}, we get that $T_{7,0} \cup T_{1}$,
$~T_{1} \cup T_{8, 0}$, $~T_{8,0} \cup T_{2}$, $~T_2 \cup T_{3}$, $~T_{3} \cup T_{1,7}$,
$~T_{1,7} \cup T_{8,0}$ triangulate $\pi^{-1}(C_1OC_2)$, $~\pi^{-1}(C_2OC_3)$, $~\pi^{-1}(C_3OC_4)$,
$~\pi^{-1}(C_4OC_5)$, $~\pi^{-1}(C_5OC_6)$, $~\pi^{-1}(C_1OC_6)$ respectively and
$T_{7,0} \cup T_{8,0}$, $~T_{7,0} \cup T_{2}$, $~T_{7,0} \cup T_{3}$, $~T_{1} \cup T_{2}$, $~T_{1} \cup T_{3}$,
$~T_{1} \cup T_{1,7}$, $~T_{8,0} \cup T_{3}$, $~T_{8,0} \cup T_{1,7}$, $T_2 \cup T_{1,7}$
triangulate $\pi^{-1}(C_1OC_3)$, $~\pi^{-1}(C_1OC_4)$, $~\pi^{-1}(C_1OC_5)$, $~\pi^{-1}(C_2OC_4)$,
$~\pi^{-1}(C_2OC_5)$, $~\pi^{-1}(C_2OC_6)$, $~\pi^{-1}(C_3OC_5)$, $~\pi^{-1}(C_3OC_6)$, $~\pi^{-1}(C_4OC_6)$
respectively. Now consider the cone $\pi^{-1}(I_i)$ over $\pi^{-1}(C_{i-1}OC_i)$ 
for $1 \leq i \leq 6$ (addition is modulo 6). This implies that
\begin{align*}
X_{\ref{egh11}}=&(V_2 \ast (T_{7,0} \cup T_{1})) \cup (V_3 \ast (T_{1} \cup T_{8,0})) \cup (V_4 \ast (T_{8,0} \cup T_{2})\\
  &\cup (V_5 \ast (T_{2} \cup T_{3})) \cup (V_6 \ast (T_{3} \cup T_{1,7})) \cup (V_1 \ast (T_{1,7} \cup T_{8,0}))
\end{align*}
is an equilibrium triangulation of the corresponding projective surface $M$ with $f_0(X_{\ref{egh11}}) = 7+3+6+6+6=28$.}
\end{eg}

\begin{eg}\label{egh12}
{\rm Let $(k,\ell)= (-3, 0)$, $(a,b)=(2,-5)$ and $(c,d)=(1,-2)$. Then, $L(k, 1)= L(-3, 1) \cong L(3,1)$,
$~L(b, a) =L(-5, 2) \cong L(5,2)$, $~L(d, c)=L(-2,1) \cong \mathbb{RP}^3$, $~L(a, b) = L(2, -5) \cong \mathbb{RP}^3$,
$~L(c,d) = L(1, -2) \cong S^3$, $~L(\ell, 1) =L(0,1) \cong S^1 \times S^2$, $~L(ck-d, 1)=L(-1, 1) \cong S^3$,
$~L(\ell k-1, \ell)= L(-1,0) \cong S^3$ and $L(a-b \ell, 1)=L(2,1) \cong \mathbb{RP}^3$.

Consider triangulations $T_{9,0}$ for $\pi^{-1}(C_1O)$, $T_{3}$ for $\pi^{-1}(C_2O)$, $T_{7,0}$ for
$\pi^{-1}(C_3O)$, $T_{4,0}$ for $\pi^{-1}(C_4O)$, $T_{1}$ for $\pi^{-1}(C_5O)$ and $T_{3,7}$ for $\pi^{-1}(C_6O)$.
Similarly, as in Example \ref{egh11}, we can show that
\begin{align*}
X_{\ref{egh12}}= &(V_2 \ast (T_{9,0} \cup T_{3})) \cup (V_3 \ast (T_{3} \cup T_{7,0})) \cup (V_4 \ast (T_{7,0} \cup T_{4,0})\\
  &\cup (V_5 \ast (T_{4,0} \cup T_{1})) \cup (V_6 \ast (T_{1} \cup T_{3,7})) \cup (V_1 \ast (T_{3,7} \cup T_{9,0}))
\end{align*}
is an equilibrium triangulation of the corresponding complex surface $M$ with $f_0(X_{\ref{egh12}}) = 7+3+10+6+6+6=38$.}
\end{eg} 

\begin{eg}\label{egh9}
{\rm Let $(k,\ell)= (-2, 0)$, $(a,b)=(1,-1)$ and $(c,d)=(1,0)$. Then, $L(k, 1)= L(-2, 1) \cong \mathbb{RP}^3$,
$~L(b, a) =L(-1, 1) \cong S^3$, $~L(d, c)=L(0,1) \cong S^1 \times S^2$, $~L(a, b) = L(1, -1) \cong S^3$,
$~L(c,d) = L(1, 0) \cong S^3$, $~L(\ell, 1) =L(0,1) \cong S^1 \times S^2$, $~L(ck-d, 1)=L(-2, 1) \cong \mathbb{RP}^3$,
$~L(\ell k-1, \ell)= L(-1,0) \cong S^3$ and $L(a-b \ell, 1)=L(1,1) \cong S^3$.

Consider triangulations $T_{2}$ for $\pi^{-1}(C_1O)$, $T_{3}$ for $\pi^{-1}(C_2O)$, $T_{7,0}$ for
$\pi^{-1}(C_3O)$, $T_{1}$ for $\pi^{-1}(C_4O)$, $T_{2,7}$ for $\pi^{-1}(C_5O)$ and $T_{3,7}$ for $\pi^{-1}(C_6O)$.
Similarly, as in Example \ref{egh11}, we can show that
\begin{align*}
X_{\ref{egh9}}= &(V_2 \ast (T_{2} \cup T_{3})) \cup (V_3 \ast (T_{3} \cup T_{7,0})) \cup (V_4 \ast (T_{7,0} \cup T_{1})\\
 &\cup (V_5 \ast (T_{1} \cup T_{2,7})) \cup (V_6 \ast (T_{2,7} \cup T_{3,7})) \cup (V_1 \ast (T_{3,7} \cup T_{2}))
\end{align*}
is an equilibrium triangulation of the corresponding nonsingular projective surface $M$ with
$f_0(X_{\ref{egh9}})=7+6+3+3+6=25$.}
\end{eg}

\begin{eg}\label{egh10}
{\rm Let $(k,\ell)= (-2, 0)$, $(a,b)=(2,-3)$ and $(c,d)=(1,-1)$. Then, $L(k, 1)= L(-2, 1) \cong \mathbb{RP}^3$,
$~L(b, a) =L(-3, 2) \cong L(3,1)$, $~L(d, c)=L(-1,1) \cong S^3$, $~L(a, b) = L(2, -3) \cong \mathbb{RP}^3$,
$~L(c,d) = L(1, -1) \cong S^3$, $~L(\ell, 1) =L(0,1) \cong S^1 \times S^2$, $~L(ck-d, 1)=L(-1, 1) \cong S^3$,
$~L(\ell k-1, \ell)= L(-1,0) \cong S^3$ and $L(a-b\ell, 1)=L(2,1) \cong \mathbb{RP}^3$.

Consider triangulations $T_{7,0}$ for $\pi^{-1}(C_1O)$, $T_{3}$ for $\pi^{-1}(C_2O)$, $T_{2}$ for
$\pi^{-1}(C_3O)$, $T_{8,0}$ for $\pi^{-1}(C_4O)$, $T_{1}$ for $\pi^{-1}(C_5O)$ and $T_{3,7}$ for $\pi^{-1}(C_6O)$.
Similarly, as in Example \ref{egh11}, we can show that
\begin{align*}
X_{\ref{egh10}}= & (V_2 \ast (T_{7,0} \cup T_{3})) \cup (V_3 \ast (T_{3} \cup T_{2})) \cup (V_4 \ast (T_{2} \cup T_{8,0})\\
   &\cup (V_5 \ast (T_{8,0} \cup T_{1})) \cup (V_6 \ast (T_{1} \cup T_{3,7})) \cup (V_1 \ast (T_{3,7} \cup T_{7,0}))
\end{align*}
is an equilibrium triangulation of the corresponding nonsingular projective surface $M$ with $f_0(X_{\ref{egh10}})=7+6+6+3+6=28$.}
\end{eg}

\begin{eg}\label{egh7}
{\rm Let $(k,\ell)= (-1, 0)$, $(a,b)=(1,0)$ and $(c,d)=(1,1)$. Then, $L(k, 1)= L(-1, 1) \cong S^3$,
$~L(b, a) =L(0, 1) \cong S^1 \times S^2$, $~L(d, c)=L(1,1) \cong S^3$, $~L(a, b) = L(1, 0) \cong S^3$,
$~L(c,d) = L(1, 1) \cong S^3$, $~L(\ell, 1) =L(0,1) \cong S^1 \times S^2$, $~L(ck-d, 1)=L(-2, 1) \cong \mathbb{RP}^3$,
$~L(\ell k-1, \ell)= L(-1,0) \cong S^3$ and $L(a-b \ell, 1)=L(1,1) \cong S^3$.

Consider triangulations $T_{1}$ for $\pi^{-1}(C_1O)$, $T_{3}$ for $\pi^{-1}(C_2O)$, $T_{2}$ for
$\pi^{-1}(C_3O)$, $T_{1,7}$ for $\pi^{-1}(C_4O)$, $T_{7,0}$ for $\pi^{-1}(C_5O)$ and $T_{3,7}$ for $\pi^{-1}(C_6O)$.
Similarly, as in Example \ref{egh11}, we can show that
\begin{align*}
X_{\ref{egh7}}= & (V_2 \ast (T_1 \cup T_{3})) \cup (V_3 \ast (T_{3} \cup T_{2})) \cup (V_4 \ast (T_{2} \cup T_{1,7})\\
 &\cup (V_5 \ast (T_{1,7} \cup T_{7,0})) \cup (V_6 \ast (T_{7,0} \cup T_{3,7})) \cup (V_1 \ast (T_{3,7} \cup T_1))
\end{align*}
is an equilibrium triangulation of the corresponding nonsingular projective surface $M$ with
$f_0(X_{\ref{egh7}})=7+3+6+3+6= 25$.}
\end{eg}

\begin{eg}\label{egh8}
{\rm Let $(k,\ell)= (-1, 0)$, $(a,b)=(2,-1)$ and $(c,d)=(1,0)$. Then, $L(k, 1)= L(-1, 1) \cong S^3$,
$~L(b, a) =L(-1, 2) \cong S^3$, $~L(d, c)=L(0,1) \cong S^1 \times S^2$, $~L(a, b) = L(2, -1) \cong \mathbb{RP}^3$,
$~L(c,d) = L(1, 0) \cong S^3$, $~L(\ell, 1) =L(0,1) \cong S^1 \times S^2$, $~L(ck-d, 1)=L(-1, 1) \cong S^3$,
$~L(\ell k-1, \ell)= L(-1,0) \cong S^3$ and $L(a-b \ell, 1)=L(2,1) \cong \mathbb{RP}^3$.

Consider triangulations $T_{3}$ for $\pi^{-1}(C_1O)$, $T_{2}$ for $\pi^{-1}(C_2O)$, $T_{1}$ for
$\pi^{-1}(C_3O)$, $T_{7,0}$ for $\pi^{-1}(C_4O)$, $T_{3,7}$ for $\pi^{-1}(C_5O)$ and $T_{2,7}$ for $\pi^{-1}(C_6O)$.
Similarly, as in Example \ref{egh11}, we can show that
\begin{align*}
X_{\ref{egh8}}= & (V_2 \ast (T_3 \cup T_{2})) \cup (V_3 \ast (T_{2} \cup T_{1})) \cup (V_4 \ast (T_{1} \cup T_{7,0})\\
  &\cup (V_5 \ast (T_{7,0} \cup T_{3,7})) \cup (V_6 \ast (T_{3,7} \cup T_{2,7})) \cup (V_1 \ast (T_{2,7} \cup T_3))
\end{align*}
is an equilibrium triangulation of the associated nonsingular projective surface  $M$ with
$f_0(X_{\ref{egh8}})=7+6+3+3+6=25$.}
\end{eg}

\begin{eg}\label{egh1}
{\rm Let $(k,\ell)= (0, 0)$, $(a,b)=(1, 1)$ and $(c,d)=(1,2)$. Then,  
$L(k, 1)= L(0, 1) \cong S^2 \times S^1$, $~L(b, a) =L(1, 1) \cong S^3$,
$~L(d, c)=L(2,1) \cong \mathbb{RP}^2$,~ $L(a, b) = L(1, 1) \cong S^3$, ~$L(c,d) = L(1, 2) \cong S^3$,
~ $L(\ell, 1) =L(0,1) \cong S^1 \times S^2$,~ $L(ck-d, 1)=L(-2, 1) \cong \mathbb{RP}^3$,
$~L(\ell k-1, - \ell)= L(-1,0) \cong S^3$ and $L(a-b \ell, 1)=L(1,1) \cong S^3$.

Consider triangulations $T_1$ for $\pi^{-1}(C_1O)$, $T_{3}$ for $\pi^{-1}(C_2O)$, $T_{1, 7}$
for $\pi^{-1}(C_3O)$, $T_{2}$ for $\pi^{-1}(C_4O)$, $T_{9,0}$ for $\pi^{-1}(C_5O)$ and $T_{3,7}$ for
$\pi^{-1}(C_6O)$. Then, by Remark \ref{R1} and Lemma \ref{LT123n} and Corollary \ref{CT7n}, we get that $T_1 \cup T_{3}$,
$~T_{3} \cup T_{1, 7}$, $~T_{1, 7} \cup T_{2}$, $~T_2 \cup T_{9,0}$, $~T_{9,0} \cup T_{3,7}$,
$~T_{3,7} \cup T_{1}$ triangulate $\pi^{-1}(C_1OC_2)$, $~\pi^{-1}(C_2OC_3)$, $~\pi^{-1}(C_3OC_4)$,
$~\pi^{-1}(C_4OC_5)$, $~\pi^{-1}(C_5OC_6)$, $~\pi^{-1}(C_1OC_6)$ respectively and
$T_1 \cup T_{1, 7}$, $~T_1 \cup T_{2}$, $~T_1 \cup T_{9,0}$, $~T_{3} \cup T_{2}$, $~T_{3} \cup T_{9,0}$,
$~T_{3} \cup T_{3,7}$, $~T_{1,7} \cup T_{9,0}$, $~T_{1,7} \cup T_{3,7}$, $T_2 \cup T_{3,7}$
triangulate $\pi^{-1}(C_1OC_3)$, $~\pi^{-1}(C_1OC_4)$, $~\pi^{-1}(C_1OC_5)$, $~\pi^{-1}(C_2OC_4)$,
$~\pi^{-1}(C_2OC_5)$, $~\pi^{-1}(C_2OC_6)$, $~\pi^{-1}(C_3OC_5)$, $~\pi^{-1}(C_3OC_6)$, $~\pi^{-1}(C_4OC_6)$
respectively. Now consider the cone $\pi^{-1}(I_i)$ over $\pi^{-1}(C_{i-1}OC_i)$ 
for $1 \leq i \leq 6$ (addition is modulo 6). This implies that
\begin{align*}
X_{\ref{egh1}}= & (V_2 \ast (T_1 \cup T_{3})) \cup (V_3 \ast (T_{3} \cup T_{1, 7})) \cup (V_4 \ast (T_{1,7} \cup T_{2})\\
           &\cup (V_5 \ast (T_2 \cup T_{9,0})) \cup (V_6 \ast (T_{9,0} \cup T_{3,7})) \cup (V_1 \ast (T_{3,7} \cup T_1))
\end{align*}
is an equilibrium triangulation of the corresponding nonsingular projective surface $M$ with $f_0(X_{\ref{egh1}}) = 7+3+6+3+6=25$. 

Since the minimum number of vertices require for a triangulation of $S^1 \times S^2$ is $10$ (cf. \cite{BD}),
any triangulation of $\pi^{-1}(C_1OC_3)$, $\pi^{-1}(C_1OC_5)$, $\pi^{-1}(C_2OC_5)$, $\pi^{-1}(C_2OC_6)$,
$\pi^{-1}(C_3OC_5)$ and $\pi^{-1}(C_4OC_6)$ needs $10$ vertices. This implies that $X_{\ref{egh1}}$
is a vertex minimal equilibrium triangulation of $M$.}
\end{eg}

\begin{eg}\label{egh2}
{\rm Let $(k,\ell)= (0, 0)$, $(a,b)=(2, 1)$ and $(c,d)=(1,1)$. So, $L(k, 1)= L(0, 1) \cong S^2 \times S^1$,
$~L(b, a) =L(1, 2) \cong S^3$, $~L(d, c)=L(1,1) \cong S^3$,~ $L(a, b) = L(2, 1) \cong \mathbb{RP}^3$,
~$L(c,d) = L(1, 1) \cong S^3$,~ $L(\ell, 1) =L(0,1) \cong S^1 \times S^2$,~ $L(ck-d, 1)=L(-1, 1) \cong S^3$,
$~L(\ell k-1, - \ell)= L(-1,0) \cong S^3$ and $L(a-b \ell, 1)=L(2,1) \cong \mathbb{RP}^3$.

Consider triangulations $T_1$ for $\pi^{-1}(C_1O)$, $T_{2}$ for $\pi^{-1}(C_2O)$, $T_{1, 7}$
for $\pi^{-1}(C_3O)$, $T_{7,0}$ for $\pi^{-1}(C_4O)$, $T_{3}$ for $\pi^{-1}(C_5O)$ and $T_{2,7}$ for
$\pi^{-1}(C_6O)$. 
Similarly, as in Example \ref{egh1}, we can show that
\begin{align*}
X_{\ref{egh2}}= & (V_2 \ast (T_1 \cup T_{2})) \cup (V_3 \ast (T_{2} \cup T_{1, 7})) \cup (V_4 \ast (T_{1,7} \cup T_{7,0})\\
           &\cup (V_5 \ast (T_{7,0} \cup T_{3})) \cup (V_6 \ast (T_{3} \cup T_{2,7})) \cup (V_1 \ast (T_{2,7} \cup T_1))
\end{align*}
is an equilibrium triangulation of the corresponding nonsingular projective space $M$ with $f_0(X_{\ref{egh2}}) = 7+3+3+6+6=25$. 

}
\end{eg}

\begin{eg}\label{egh3}
{\rm Let $(k,\ell)= (1, 0)$, $(a,b)=(1,2)$ and $(c,d)=(1,3)$. Then, $L(k, 1)= L(1, 1) \cong S^3$,
$~L(b, a) =L(2, 1) \cong \mathbb{RP}^3$, $~L(d, c)=L(3,1)$, $~L(a, b) = L(1, 2) \cong S^3$,
$~L(c,d) = L(1, 3) \cong S^3$, $~L(\ell, 1) =L(0,1) \cong S^1 \times S^2$, $L(ck-d, 1)=L(-2, 1) \cong \mathbb{RP}^3$,
$~L(\ell k-1, - \ell)= L(-1,0) \cong S^3$ and $L(a-b\ell, 1)=L(1,1) \cong S^3$.

Consider triangulations $T_{7,0}$ for $\pi^{-1}(C_1O)$, $T_{1}$ for $\pi^{-1}(C_2O)$, $T_{3}$ for
$\pi^{-1}(C_3O)$, $T_{2}$ for $\pi^{-1}(C_4O)$, $T_{8,0}$ for $\pi^{-1}(C_5O)$ and $T_{1,7}$ for $\pi^{-1}(C_6O)$.
Similarly, as in Example \ref{egh1}, we can show that
\begin{align*}
X_{\ref{egh3}}= & (V_2 \ast (T_{7,0} \cup T_{1})) \cup (V_3 \ast (T_{1} \cup T_{3})) \cup (V_4 \ast (T_{3} \cup T_{2})\\
           &\cup (V_5 \ast (T_2 \cup T_{8,0})) \cup (V_6 \ast (T_{8,0} \cup T_{1,7})) \cup (V_1 \ast (T_{1,7} \cup T_{7,0}))
\end{align*}
is an equilibrium triangulation of the corresponding nonsingular projective surface $M$ with
$f_0(X_{\ref{egh3}})=7+6+6+3+6=28$.}
\end{eg}

\begin{eg}\label{egh300}
{\rm Let $(k,\ell)= (1, 0)$, $(a,b)=(2,3)$ and $(c,d)=(1,2)$. Then, $L(k, 1)= L(1, 1) \cong S^3$,
$~L(b, a) =L(3, 2) \cong L(3,1)$, $~L(d, c)=L(2,1) \cong \mathbb{RP}^3$, $~L(a, b) = L(2, 3) \cong \mathbb{RP}^3$,
$~L(c,d) = L(1, 2) \cong S^3$, $~L(\ell, 1) =L(0,1) \cong S^1 \times S^2$, $L(ck-d, 1)=L(-1, 1) \cong S^3$,
$~L(\ell k-1, - \ell)= L(-1,0) \cong S^3$ and $L(a-b \ell, 1)=L(2,1) \cong \mathbb{RP}^3$.

Consider triangulations $T_{2}$ for $\pi^{-1}(C_1O)$, $T_{3}$ for $\pi^{-1}(C_2O)$, $T_{1}$ for
$\pi^{-1}(C_3O)$, $T_{4,0}$ for $\pi^{-1}(C_4O)$, $T_{7,0}$ for $\pi^{-1}(C_5O)$ and $T_{3,7}$ for $\pi^{-1}(C_6O)$.
Similarly, as in Example \ref{egh1}, we can show that
\begin{align*}
X_{\ref{egh300}}= &(V_2 \ast (T_{2} \cup T_{3})) \cup (V_3 \ast (T_{3} \cup T_{1})) \cup (V_4 \ast (T_{1} \cup T_{4,0})\\
           &\cup (V_5 \ast (T_{4,0} \cup T_{7,0})) \cup (V_6 \ast (T_{7,0} \cup T_{3,7})) \cup (V_1 \ast (T_{3,7} \cup T_{2}))
\end{align*}
is an equilibrium triangulation of the corresponding nonsingular projective surface $M$ with
$f_0(X_{\ref{egh300}})=7+10+6+3+6=32$.}
\end{eg}

\begin{eg}\label{egh4}
{\rm Let $(k,\ell)= (2, 0)$, $(a,b)=(1,3)$ and $(c,d)=(1,4)$. Then, $L(k, 1)= L(2, 1) \cong \mathbb{RP}^3$,
$~L(b, a) =L(3, 1)$, $~L(d, c)=L(4,1)$, $~L(a, b) = L(1, 3) \cong S^3$,
$~L(c,d) = L(1, 4) \cong S^3$, $~L(\ell, 1) =L(0,1) \cong S^1 \times S^2$, $L(ck-d, 1)=L(-2, 1) \cong \mathbb{RP}^3$,
$~L(\ell k-1, \ell)= L(-1,1) \cong S^3$ and $L(a-b \ell, 1)=L(1,1) \cong S^3$.

Consider triangulations $T_{7,0}$ for $\pi^{-1}(C_1O)$, $T_{3}$ for $\pi^{-1}(C_2O)$, $T_{2}$ for
$\pi^{-1}(C_3O)$, $T_{9,0}$ for $\pi^{-1}(C_4O)$, $T_{6,0}$ for $\pi^{-1}(C_5O)$ and $T_{3,7}$ for $\pi^{-1}(C_6O)$.
Similarly, as in Example \ref{egh1}, we can show that
\begin{align*}
X_{\ref{egh4}} =& (V_2 \ast (T_{7,0} \cup T_{3})) \cup (V_3 \ast (T_{3} \cup T_{2})) \cup (V_4 \ast (T_{2} \cup T_{9,0})\\
           &\cup (V_5 \ast (T_{9,0} \cup T_{6,0})) \cup (V_6 \ast (T_{6,0} \cup T_{3,7})) \cup (V_1 \ast (T_{3,7} \cup T_{7,0}))
\end{align*}
is an equilibrium triangulation of the corresponding nonsingular projective surface $M$ with
$f_0(X_{\ref{egh4}})=7+6+6+10+3+6=38$.}
\end{eg}

\begin{eg}\label{egh5}
{\rm Let $(k,\ell)= (2, 0)$, $(a,b)=(2,5)$ and $(c,d)=(1,3)$. Then, $L(k, 1)= L(2, 1) \cong \mathbb{RP}^3$,
$~L(b, a) =L(5, 2)$, $~L(d, c)=L(3,1)$, $~L(a, b) = L(2, 5) \cong \mathbb{RP}^3$,
$~L(c,d) = L(1, 3) \cong S^3$, $~L(\ell, 1) =L(0,1) \cong S^1 \times S^2$, $L(ck-d, 1)=L(-1, 1) \cong S^3$,
$~L(\ell k-1, \ell)= L(-1,0) \cong S^3$ and $L(a-b \ell, 1)=L(2,1) \cong \mathbb{RP}^3$.

Consider triangulations $T_{4,0}$ for $\pi^{-1}(C_1O)$, $T_{1}$ for $\pi^{-1}(C_2O)$, $T_{3}$ for
$\pi^{-1}(C_3O)$, $T_{9, 0}$ for $\pi^{-1}(C_4O)$, $T_{2}$ for $\pi^{-1}(C_5O)$ and $T_{1,7}$ for $\pi^{-1}(C_6O)$.
Similarly, as in Example \ref{egh1}, we can show that
\begin{align*}
X_{\ref{egh5}}= &(V_2 \ast (T_{4,0} \cup T_{1})) \cup (V_3 \ast (T_{1} \cup T_{3})) \cup (V_4 \ast (T_{3} \cup T_{9,0})\\
      &\cup (V_5 \ast (T_{9,0} \cup T_{2})) \cup (V_6 \ast (T_{2} \cup T_{1,7})) \cup (V_1 \ast (T_{1,7} \cup T_{4,0}))
\end{align*}
is an equilibrium triangulation of the corresponding nonsingular projective surface $M$ with
$f_0(X_{\ref{egh5}})=7+10+6+3+6=32$.}
\end{eg}

\begin{eg}\label{egh6}
{\rm Let $(k,\ell)= (3, 0)$, $(a,b)=(2,7)$ and $(c,d)=(1,4)$. Then, $L(k, 1)= L(3, 1)$,
$~L(b, a) =L(7, 2)$, $~L(d, c)=L(4,1)$, $~L(a, b) = L(2, 7) \cong \mathbb{RP}^3$,
$~L(c,d) = L(1, 4) \cong S^3$, $~L(\ell, 1) =L(0,1) \cong S^1 \times S^2$, $~L(ck-d, 1)=L(-1, 1) \cong S^3$,
$~L(k\ell-1, \ell)= L(-1,0) \cong S^3$ and $L(a-b\ell, 1)=L(2,1) \cong \mathbb{RP}^3$.

Consider triangulations $T_{5,0}$ for $\pi^{-1}(C_1O)$, $T_{2}$ for $\pi^{-1}(C_2O)$, $T_{3}$ for
$\pi^{-1}(C_3O)$, $T_{6,0}$ for $\pi^{-1}(C_4O)$, $T_{9,0}$ for $\pi^{-1}(C_5O)$ and $T_{2,7}$ for $\pi^{-1}(C_6O)$.
Similarly, as in Example \ref{egh1}, we can show that
\begin{align*}
X_{\ref{egh6}}= &(V_2 \ast (T_{5,0} \cup T_{2})) \cup (V_3 \ast (T_{2} \cup T_{3})) \cup (V_4 \ast (T_{3} \cup T_{6,0})\\
           &\cup (V_5 \ast (T_{6,0} \cup T_{9,0})) \cup (V_6 \ast (T_{9,0} \cup T_{2,7})) \cup (V_1 \ast (T_{2,7} \cup T_{5,0}))
\end{align*}
is an equilibrium triangulation of the corresponding nonsingular projective surface $M$ with
$f_0(X_{\ref{egh6}})=7+10+10+6+3+6=42$.}
\end{eg}

\begin{eg}\label{egh13}
{\rm Let $(k,\ell)= (-3, -1)$, $(a,b)=(0,1)$ and $(c,d)=(-1,2)$. So, $L(k, 1)= L(-3, 1) \cong L(3,1)$,
$~L(b, a) =L(1, 0) \cong S^3$, $~L(d, c)=L(2,-1) \cong \mathbb{RP}^3$, $~L(a, b) = L(0, 1) \cong S^1 \times S^2$,
$~L(c,d) = L( -1, 2) \cong S^3$, $~L(\ell, 1) =L(-1,1) \cong S^3$, $~L(ck-d, 1)=L(1, 1) \cong S^3$,
$~L(\ell k-1, \ell)= L(2,-1) \cong \mathbb{RP}^3$ and $L(a-b \ell, 1)=L(1,1) \cong S^3$.

Consider triangulations $T_{7,0}$ for $\pi^{-1}(C_1O)$, $T_{1,7}$ for $\pi^{-1}(C_2O)$, $T_{8,0}$ for
$\pi^{-1}(C_3O)$, $T_{1}$ for $\pi^{-1}(C_4O)$, $T_{2}$ for $\pi^{-1}(C_5O)$ and $T_{3}$ for $\pi^{-1}(C_6O)$.
Similarly, as in Example \ref{egh1}, we can show that
\begin{align*}
X_{\ref{egh13}}= & (V_2 \ast (T_{7,0} \cup T_{1,7})) \cup (V_3 \ast (T_{1,7} \cup T_{8,0})) \cup (V_4 \ast (T_{8,0} \cup T_{1})\\
  &\cup (V_5 \ast (T_{1} \cup T_{2})) \cup (V_6 \ast (T_{2} \cup T_{3})) \cup (V_1 \ast (T_{3} \cup T_{7,0}))
\end{align*}
is an equilibrium triangulation of the corresponding surface with $f_0(X_{\ref{egh13}})=7+6+3+6+6=28$.}
\end{eg}

\begin{eg}\label{egh14}
{\rm Let $(k,\ell)= (-3, -1)$, $(a,b)=(1,-2)$ and $(c,d)=(0,1)$. Then, $L(k, 1)= L(-3, 1) \cong L(3,1)$,
$~L(b, a) =L(-2, 1) \cong \mathbb{RP}^3$, $~L(d, c)=L(1,0) \cong S^3$, $~L(a, b) = L(1, -2) \cong S^3$,
$~L(c,d) = L(0, 1) \cong S^1 \times S^2$, $~L(\ell, 1) =L(-1,1) \cong S^3$, $~L(ck-d, 1)=L(-1, 1) \cong S^3$,
$~L(\ell k-1, \ell)= L(2, -1) \cong \mathbb{RP}^3$ and $L(a-b \ell, 1)=L(-1,1) \cong S^3$.

Consider triangulations $T_{7,0}$ for $\pi^{-1}(C_1O)$, $T_{1,7}$ for $\pi^{-1}(C_2O)$, $T_{8,0}$ for
$\pi^{-1}(C_3O)$, $T_{2}$ for $\pi^{-1}(C_4O)$, $T_{1}$ for $\pi^{-1}(C_5O)$ and $T_{3}$ for $\pi^{-1}(C_6O)$.
Similarly, as in Example \ref{egh1}, we can show that
\begin{align*}
X_{\ref{egh14}} = &(V_2 \ast (T_{7,0} \cup T_{1,7})) \cup (V_3 \ast (T_{1,7} \cup T_{8,0})) \cup (V_4 \ast (T_{8,0} \cup T_{2})\\
  &\cup (V_5 \ast (T_{2} \cup T_{1})) \cup (V_6 \ast (T_{1} \cup T_{3})) \cup (V_1 \ast (T_{3} \cup T_{7,0}))
\end{align*}
is an equilibrium triangulation of the corresponding projective surface $M$ with $f_0(X_{\ref{egh14}})=7+6+3+6+6=28$.}
\end{eg}

\begin{eg}\label{egh18}
{\rm Let $(k,\ell)= (-2, -1)$, $(a,b)=(1,-1)$ and $(c,d)=(-3, 4)$. Then, $L(k, 1)= L(-2, 1) \cong \mathbb{RP}^3$,
$~L(b, a) =L(-1, 1) \cong S^3$, $~L(d, c)=L(4,-3) \cong L(4,1)$, $~L(a, b) = L(1, -1) \cong S^3$,
$~L(c,d) = L(-3,4) \cong L(3,1)$, $~L(\ell, 1) =L(-1,1) \cong S^3$, $~L(ck-d, 1)=L(2, 1) \cong \mathbb{RP}^3$,
$~L(\ell k-1, \ell)= L(1, -1) \cong S^3$ and $L(a-b \ell, 1)=L(0,1) \cong S^1 \times S^2$.

Consider triangulations $T_{4,0}$ for $\pi^{-1}(C_1O)$, $T_{7,0}$ for $\pi^{-1}(C_2O)$, $T_{3}$ for
$\pi^{-1}(C_3O)$, $T_{1}$ for $\pi^{-1}(C_4O)$, $T_{8,0}$ for $\pi^{-1}(C_5O)$ and $T_{1,7}$ for $\pi^{-1}(C_6O)$.
Similarly, as in Example \ref{egh1}, we can show that
\begin{align*}
X_{\ref{egh18}} = & (V_2 \ast (T_{4,0} \cup T_{7,0})) \cup (V_3 \ast (T_{7,0} \cup T_{3})) \cup (V_4 \ast (T_{3} \cup T_{1})\\
  &\cup (V_5 \ast (T_{1} \cup T_{8,0})) \cup (V_6 \ast (T_{8,0} \cup T_{1,7})) \cup (V_1 \ast (T_{1,7} \cup T_{4,0}))
\end{align*}
is an equilibrium triangulation of the corresponding projective surface with $f_0(X_{\ref{egh18}})=7+10+6+6+3+6=38$.}
\end{eg}

\begin{eg}\label{egh23}
{\rm Let $(k,\ell)= (-2, -1)$, $(a,b)=(3,-5)$ and $(c,d)=(-1, 2)$. Then, $L(k, 1)= L(-2, 1) \cong \mathbb{RP}^3$,
$~L(b, a) =L(-5, 3) \cong L(5,2)$, $~L(d, c)=L(2,-1) \cong \mathbb{RP}^3$, $~L(a, b) = L(3, -5) \cong L(3,1)$,
$~L(c,d) = L(-1,2) \cong S^3$, $~L(\ell, 1) =L(-1,1) \cong S^3$, $~L(ck-d, 1)=L(0, 1) \cong S^1 \times S^2$,
$~L(k\ell-1,\ell)= L(1, -1) \cong S^3$ and $L(a-b\ell, 1)=L(-2,1) \cong \mathbb{RP}^3$.

Consider triangulations $T_{9,0}$ for $\pi^{-1}(C_1O)$, $T_{2}$ for $\pi^{-1}(C_2O)$, $T_{1}$ for
$\pi^{-1}(C_3O)$, $T_{4,0}$ for $\pi^{-1}(C_4O)$, $T_{1,7}$ for $\pi^{-1}(C_5O)$ and $T_{3}$ for $\pi^{-1}(C_6O)$.
Similarly, as in Example \ref{egh1}, we can show that
\begin{align*}
X_{\ref{egh23}}= & (V_2 \ast (T_{9,0} \cup T_{2})) \cup (V_3 \ast (T_{2} \cup T_{1})) \cup (V_4 \ast (T_{1} \cup T_{4,0})\\
  &\cup (V_5 \ast (T_{4,0} \cup T_{1,7})) \cup (V_6 \ast (T_{1,7} \cup T_{3})) \cup (V_1 \ast (T_{3} \cup T_{9,0}))
\end{align*}
is an equilibrium triangulation of the corresponding projective surface with $f_0(X_{\ref{egh23}})=7+6+10+3+6=32$.}
\end{eg}

\begin{eg}\label{egh31}
{\rm Let $(k,\ell)= (-1, -1)$, $(a,b)=(-2,3)$ and $(c,d)=(-3, 4)$. Then, $L(k, 1)= L(-1, 1) \cong S^3$,
$~L(b, a) =L(3, -2) \cong L(3,1)$, $~L(d, c)=L(4,-3) \cong L(4,1)$, $~L(a, b) = L(-2, 3) \cong \mathbb{RP}^3$,
$~L(c,d) = L(-3,4) \cong L(3,1)$, $~L(\ell, 1) =L(-1,1) \cong S^3$, $~L(ck-d, 1)=L(-1, 1) \cong S^3$,
$~L(k\ell-1, \ell)= L(0, -1) \cong S^1 \times S^2$ and $L(a-b\ell, 1)=L(1,1) \cong S^3$.

Consider triangulations $T_{8,0}$ for $\pi^{-1}(C_1O)$, $T_{2}$ for $\pi^{-1}(C_2O)$, $T_{1}$ for
$\pi^{-1}(C_3O)$, $T_{7,0}$ for $\pi^{-1}(C_4O)$, $T_{4,0}$ for $\pi^{-1}(C_5O)$ and $T_{1,7}$ for $\pi^{-1}(C_6O)$.
Similarly, as in Example \ref{egh1}, we can show that
\begin{align*}
X_{\ref{egh31}}= &(V_2 \ast (T_{8,0} \cup T_{2})) \cup (V_3 \ast (T_{2} \cup T_{1})) \cup (V_4 \ast (T_{1} \cup T_{7,0})\\
  &\cup (V_5 \ast (T_{7,0} \cup T_{4,0})) \cup (V_6 \ast (T_{4,0} \cup T_{1,7})) \cup (V_1 \ast (T_{1,7} \cup T_{8,0}))
\end{align*}
is an equilibrium triangulation of the corresponding projective surface with $f_0(X_{\ref{egh31}})=7+6+6+10+3+6=38$.}
\end{eg}

\begin{eg}\label{egh34}
{\rm Let $(k,\ell)= (1, -1)$, $(a,b)=(0,1)$ and $(c,d)=(-1, 2)$. Then, $L(k, 1)= L(1, 1) \cong S^3$,
$~L(b, a) =L(1, 0) \cong S^3$, $~L(d, c)=L(2,-1) \cong \mathbb{RP}^3$, $~L(a, b) = L(0, 1) \cong S^1 \times S^2$,
$~L(c,d) = L(-1,2) \cong S^3$, $~L(\ell, 1) =L(-1,1) \cong S^3$, $~L(ck-d, 1)=L(-3, 1) \cong L(3,1)$,
$~L(k\ell-1, \ell)= L(-2, -1) \cong \mathbb{RP}^3$ and $L(a-b\ell, 1)=L(1,1) \cong S^3$.

Consider triangulations $T_{3}$ for $\pi^{-1}(C_1O)$, $T_{1,7}$ for $\pi^{-1}(C_2O)$, $T_{7,0}$ for
$\pi^{-1}(C_3O)$, $T_{1}$ for $\pi^{-1}(C_4O)$, $T_{8,0}$ for $\pi^{-1}(C_5O)$ and $T_{2}$ for $\pi^{-1}(C_6O)$.
Similarly, as in Example \ref{egh1}, we can show that
\begin{align*}
X_{\ref{egh34}}= & (V_2 \ast (T_{3} \cup T_{1,7})) \cup (V_3 \ast (T_{1,7} \cup T_{7,0})) \cup (V_4 \ast (T_{7,0} \cup T_{1})\\
  & \cup (V_5 \ast (T_{1} \cup T_{8,0})) \cup (V_6 \ast (T_{8,0} \cup T_{2})) \cup (V_1 \ast (T_{2} \cup T_{3}))
\end{align*}
is an equilibrium triangulation of the corresponding projective surface with $f_0(X_{\ref{egh34}})=7+3+6+6+6=28$.}
\end{eg}

\begin{eg}\label{egh35}
{\rm Let $(k,\ell)= (1, -1)$, $(a,b)=(1,2)$ and $(c,d)=(0, 1)$. Then, $L(k, 1)= L(1, 1) \cong S^3$,
$~L(b, a) =L(2, 1) \cong \mathbb{RP}^3$, $~L(d, c)=L(1,0) \cong S^3$, $~L(a, b) = L(1, 2) \cong S^3$,
$~L(c,d) = L(0,1) \cong S^1 \times S^2$, $~L(\ell, 1) =L(-1,1) \cong S^3$, $~L(ck-d, 1)=L(-1, 1) \cong S^3$,
$~L(k\ell-1, \ell)= L(-2, -1) \cong \mathbb{RP}^3$ and $L(a-b\ell, 1)=L(3,1)$.

Consider triangulations $T_{3}$ for $\pi^{-1}(C_1O)$, $T_{1,7}$ for $\pi^{-1}(C_2O)$, $T_{2}$ for
$\pi^{-1}(C_3O)$, $T_{8,0}$ for $\pi^{-1}(C_4O)$, $T_{1}$ for $\pi^{-1}(C_5O)$ and $T_{7,0}$ for $\pi^{-1}(C_6O)$.
Similarly, as in Example \ref{egh1}, we can show that
\begin{align*}
X_{\ref{egh35}}= & (V_2 \ast (T_{3} \cup T_{1,7})) \cup (V_3 \ast (T_{1,7} \cup T_{2})) \cup (V_4 \ast (T_{2} \cup T_{8,0})\\
  &\cup (V_5 \ast (T_{8,0} \cup T_{1})) \cup (V_6 \ast (T_{1} \cup T_{7,0})) \cup (V_1 \ast (T_{7,0} \cup T_{3}))
\end{align*}
is an equilibrium triangulation of the corresponding projective surface with $f_0(X_{\ref{egh35}})=7+3+6+6+6=28$.}
\end{eg}

\begin{eg}\label{egh36}
{\rm Let $(k,\ell)= (2, -1)$, $(a,b)=(0,1)$ and $(c,d)=(-1, 2)$. Then, $L(k, 1)= L(2, 1) \cong \mathbb{RP}^3$,
$~L(b, a) =L(1, 0) \cong S^3$, $~L(d, c)=L(2,-1) \cong \mathbb{RP}^3$, $~L(a, b) = L(0, 1) \cong S^1 \times S^2$,
$~L(c,d) = L(-1,2) \cong S^3$, $~L(\ell, 1) =L(-1,1) \cong S^3$, $~L(ck-d, 1)=L(-4, 1) \cong L(4,1)$,
$~L(k\ell-1, \ell)= L(-3, -1) \cong L(3,1)$ and $L(a-b\ell, 1)=L(1,1) \cong S^3$.

Consider triangulations $T_{3}$ for $\pi^{-1}(C_1O)$, $T_{1,7}$ for $\pi^{-1}(C_2O)$, $T_{8,0}$ for
$\pi^{-1}(C_3O)$, $T_{1}$ for $\pi^{-1}(C_4O)$, $T_{4,0}$ for $\pi^{-1}(C_5O)$ and $T_{7,0}$ for $\pi^{-1}(C_6O)$.
Similarly, as in Example \ref{egh1}, we can show that
\begin{align*}
X_{\ref{egh36}}= &(V_2 \ast (T_{3} \cup T_{1,7})) \cup (V_3 \ast (T_{1,7} \cup T_{8,0})) \cup (V_4 \ast (T_{8,0} \cup T_{1})\\
  &\cup (V_5 \ast (T_{1} \cup T_{4,0})) \cup (V_6 \ast (T_{4,0} \cup T_{7,0})) \cup (V_1 \ast (T_{7,0} \cup T_{3}))
\end{align*}
is an equilibrium triangulation of the corresponding projective surface with $f_0(X_{\ref{egh36}})=7+3+6+10+6+6=38$.}
\end{eg}

\begin{eg}\label{egh37}
{\rm Let $(k,\ell)= (-2, 1)$, $(\ell,1)=(1,1$, $(a,b)=(1,-1)$ and $(c,d)=(1, 0)$. Then, $L(k, 1)= L(-2, 1) \cong \mathbb{RP}^3$,
$~L(b, a) =L(-1, 1) \cong S^3$, $~L(d, c)=L(0,1) \cong S^1 \times S^2$, $~L(a, b) = L(1, -1) \cong S^3$,
$~L(c,d) = L(1,0) \cong S^3$, $~L(\ell, 1) =L(1,1) \cong S^3$, $~L(ck-d, 1)=L(-2, 1) \cong \mathbb{RP}^3$,
$~L(k\ell-1, \ell)= L(-3, 1) \cong L(3,1)$ and $L(a-b\ell, 1)=L(2,1) \cong \mathbb{RP}^3$.

Consider triangulations $T_{2}$ for $\pi^{-1}(C_1O)$, $T_{1}$ for $\pi^{-1}(C_2O)$, $T_{7,0}$ for
$\pi^{-1}(C_3O)$, $T_{3}$ for $\pi^{-1}(C_4O)$, $T_{2,7}$ for $\pi^{-1}(C_5O)$ and $T_{8,0}$ for $\pi^{-1}(C_6O)$.
Similarly, as in Example \ref{egh1}, we can show that
\begin{align*}
X_{\ref{egh37}}= &(V_2 \ast (T_{2} \cup T_{1})) \cup (V_3 \ast (T_{1} \cup T_{7,0})) \cup (V_4 \ast (T_{7,0} \cup T_{3})\\
  &\cup (V_5 \ast (T_{3} \cup T_{2,7})) \cup (V_6 \ast (T_{2,7} \cup T_{8,0})) \cup (V_1 \ast (T_{8,0} \cup T_{2}))
\end{align*}
is an equilibrium triangulation of the corresponding projective surface with $f_0(X_{\ref{egh37}})=7+6+3+6+6=28$.}
\end{eg}

\begin{remark}
Since face preserving automorphism of $n$-gon is given by reflection and rotation, by Proposition
\ref{clema3}, we can see that no two nonsingular projective surfaces in the examples of Sections \ref{m4}, 
\ref{m5} and \ref{m6} are equivariantly homeomorphic. But there are equilibrium triangulations in these 
examples which are isomorphic. So, isomorphism of equilibrium triangulation does not guarantee equivariant
homeomorphism of quasitoric manifolds.
\end{remark}

{\bf Acknowledgements:} The first author is supported by DIICCSRTE, Australia and DST, India, under the
Australia-India Strategic Research Fund (project AISRF06660) and by the UGC Centre for Advanced Studies. 
This work was started when the second author was a CPDF of IISc. He 
thanks IISc, PIMS, Fields Institute and University of Regina for financial support.

\bibliographystyle{alpha}
\bibliography{bibliography.bib}

\end{document}